\documentclass[11pt]{amsart}

\usepackage[a4paper,margin=1.05in]{geometry}
\usepackage{amsmath,amssymb,amsthm,mathtools}
\usepackage{microtype}
\usepackage[colorlinks=true,linkcolor=blue,citecolor=blue,urlcolor=blue]{hyperref}

\numberwithin{equation}{section}

\newtheorem{theorem}{Theorem}[section]
\newtheorem{proposition}[theorem]{Proposition}
\newtheorem{corollary}[theorem]{Corollary}

\newtheorem{remark}[theorem]{Remark}
\newtheorem{lemma}[theorem]{Lemma}
\newtheorem{question}[theorem]{Question}
\newcommand{\R}{\mathbb R}
\newcommand{\dimH}{\dim_{\mathcal H}}
\newcommand{\eps}{\varepsilon}
\newcommand{\supp}{\operatorname{supp}}
\newcommand{\Vol}{\mathcal V}

\title{Pinned Nonempty Interior and Volumes of Simplices}

\author[E. A. Palsson]{Eyvindur Ari Palsson}\email{palsson@vt.edu}\address{Department of Mathematics, Virginia Tech, Virginia, VA 24061, USA}
\author[G. Psaromiligkos]{Georgios Psaromiligkos}\email{gpsaromiligkos@uth.gr}\address{Department of Mathematics, University of Thessaly, Lamia, Greece}

\begin{document}

\begin{abstract}
We study pinned nonempty-interior problems for scalar two-point configurations and for volumes of simplices. For $E\subset\mathbb{R}^d$, $d\geq 2$, compact and a smooth scalar configuration map $\Phi(x,y)$, whose corresponding localized generalized Radon transforms are nondegenerate Fourier integral operators of smoothing order $(d-1)/2$, we first note how a calculation due to Greenleaf, Iosevich and Taylor can be used to obtain positive Lebesgue measure of $\Delta_\Phi^y(E)=\{\Phi(x,y):x\in E\}$ for almost every pin $y$ when $\dimH(E)>(d+1)/2$. Our first main result is to prove that the corresponding one-frequency-loss estimate for differentiation in the level parameter yields a continuous pinned density, and hence nonempty interior, for almost every pin when $d\geq3$ and $\dimH(E)>(d+2)/2$. Concrete applications include generalized norm distances, regular variable-coefficient and Riemannian distances, and dot products or nondegenerate bilinear forms on regular patches.

Our principal geometric application concerns volumes of simplices. We prove a cylinder-averaging estimate for triangle areas in $\mathbb{R}^d$ and obtain positive measure for doubly pinned area sets at a dimensional threshold $(d+1)/2$ and nonempty interior at $(d+2)/2$. A projection theorem then reduces higher simplex-volume problems to triangle areas. In particular, for $3\leq k \leq d$, if $\dimH(E)>(d+k-1)/2$, then for every prescribed base point $x_0$ and every prescribed second vertex $y\in E\setminus\{x_0\}$, the set of $k$-dimensional volumes generated by $x_0,y$ and $k-1$ further points of $E$ has nonempty interior. Thus the result is doubly strongly pinned in its first two vertices.
\end{abstract}

\maketitle

\section{Introduction}\label{sec:introduction}

\subsection{Distances, pinned distances and non-empty interior}

For a compact set $E\subset\R^d$, $d\geq 2$, the Falconer distance problem asks for a dimensional threshold ensuring that the distance set
\begin{equation*}
\Delta(E) =\{|x-y|:x,y\in E\}
\end{equation*}
has positive Lebesgue measure. Falconer proved this with $\dimH(E)>(d+1)/2$ and conjectured that the sharp threshold is $d/2$ \cite{Falconer}. Subsequent work, including \cite{Bourgain,Wolff,ErdoganDistance,DuZhang,DuGuthOuWangWilsonZhang,DuIosevichOuWangZhang}, lowered the threshold culminating in $\frac{5}{4}$ when $d=2$ \cite{GuthIosevichOuWang} and $\frac{d}{2}+\frac{1}{4}-\frac{1}{8d+4}$ when $d\geq 3$ \cite{DuOuRenZhang}.

The pinned distance set at $y\in E$ is
\begin{equation*}
\Delta^y(E) =\{|x-y|:x\in E\}.
\end{equation*}
Peres and Schlag \cite{PeresSchlag} obtained a threshold $\dimH(E)>(d+1)/2$ that guarantees the existence of $y\in E$ such that $\Delta^y(E)$ has positive Lebesgue measure. We note this is a far stronger statement that immediately implies results for the original Falconer distance problem. Liu connected the proof techniques of the pinned and the original Falconer distance problem \cite{LiuPinned} which led to the best thresholds stated above for the original problem actually having been proven for the pinned variant. We note that the proofs for the best thresholds guarantee far more than just the existence of a single pin, but rather show the conclusion holds for $\mu$ a.e. $y\in E$ where $\mu$ is a Frostman measure on $E$.

Another variant of the original Falconer distance problem asks for non-empty interior instead of merely showing that $\Delta(E)$ has positive Lebesgue measure. Mattila and Sj\"olin obtained such a result at the dimensional threshold $(d+1)/2$ \cite{MattilaSjolin}. No improvements have been made on that threshold since, although progress has been made in different settings or under different assumptions. Combining the two previous variants leads to an even stronger statement, namely trying to show that the pinned distance set $\Delta^y(E)$ has non-empty interior. Peres and Schlag \cite{PeresSchlag} obtained the threshold $(d+2)/2$, $d\geq 3$, for such a statement. This has recently been improved by Borges, Foster, Ou and the first named author \cite{BorgesFosterOuPalsson}, who obtained the first non-trivial threshold of $7/4$ when $d=2$ and the improved threshold $12/5$ when $d=3$. In higher dimensions this was recently further improved by Borges, Ou and Pasquariello \cite{BorgesOuPasquariello} who obtained the threshold $\frac{d}{2}+\frac{3}{4}+\frac{3}{4(2d-1)}$ when $d\geq 4$.

\subsection{Scalar configuration maps}

Let $\Phi$ be a smooth real-valued function on an open subset of $\R^d\times\R^d$, and define
\begin{equation*}
\Delta_\Phi^y(E)=\{\Phi(x,y):x\in E\}.
\end{equation*}
Greenleaf, Iosevich and Taylor proved an unpinned nonempty-interior theorem for two-point configuration sets under uniform Sobolev smoothing assumptions for the associated generalized Radon transforms \cite[Theorem~1.1]{GIT19}. In the scalar nondegenerate case, the transforms have smoothing of order $(d-1)/2$ and their theorem gives the threshold $(d+1)/2$. The same fixed-parameter estimate used in their proof yields the following pinned positive-measure conclusion by the argument proved in Section~\ref{sec:scalar-positive}. Their work extended and built on earlier work such as \cite{EswarathasanIosevichTaylor,IosevichMourgoglouTaylor,ErdoganHartIosevich}.

\begin{corollary}[Pinned consequence of the Greenleaf--Iosevich--Taylor estimate in \cite{GIT19}]\label{cor:intro-pinned-positive}
Let $d\geq2$, let $E\subset\R^d$ be compact, and let $\mu$ be a probability measure supported on $E$ with finite $s$-energy. Assume that there is a countable collection of regular patches whose union contains $E\times E$ up to a set of $\mu\times\mu$ measure zero, and that on each patch the localized generalized Radon transforms associated with
\begin{equation*}
\Phi(x,y)=t
\end{equation*}
are nondegenerate Fourier integral operators of smoothing order $(d-1)/2$, uniformly for $t$ in compact parameter intervals. If
\begin{equation*}
s>\frac{d+1}{2},
\end{equation*}
then $\Delta_\Phi^y(E)$ has positive Lebesgue measure for $\mu$-almost every $y\in E$.
\end{corollary}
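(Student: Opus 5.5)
The plan is the standard Peres--Schlag / Mattila pinned argument. For a fixed pin $y$, push $\mu$ forward under $x\mapsto\Phi(x,y)$ to obtain the measure $\nu_y$ on $\R$. If $\nu_y\in L^2(\R)$, then $\nu_y$ is absolutely continuous and nonzero, so its support, which is contained in $\Delta_\Phi^y(E)$, has positive Lebesgue measure. It therefore suffices to prove
\begin{equation*}
\int \|\nu_y\|_{L^2(\R)}^2\,d\mu(y)<\infty,
\end{equation*}
since this forces $\nu_y\in L^2$ for $\mu$-a.e.\ $y$.

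\textbf{Localization.} Using the countable cover of $E\times E$ by regular patches, write $\mu\times\mu$ (up to a null set) as a countable sum of pieces $\chi_j(x,y)\,d\mu(x)\,d\mu(y)$ with smooth cutoffs $\chi_j$ supported in patches. Define the corresponding localized pinned measures $\nu_y^j$ by
\begin{equation*}
\int g\,d\nu_y^j=\int g(\Phi(x,y))\chi_j(x,y)\,d\mu(x).
\end{equation*}
Then $\nu_y=\sum_j\nu_y^j$. Since the sum is countable, it is enough to show that, for each fixed $j$, $\nu_y^j\in L^2$ for $\mu$-a.e.\ $y$ with $\nu_y^j\neq0$ (positivity of each piece on a subset of $y$ of positive measure covers a.e.\ $y$ overall). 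So fix one patch, where $\Phi$ has nonvanishing gradients in $x$ and $y$ and the Radon transforms are nondegenerate.

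\textbf{Main estimate.} Mollify by setting $\nu_{y,\eps}^j=\nu_y^j*\phi_\eps$, and aim for a bound uniform in $\eps$. By duality,
\begin{equation*}
\nu^j_{y,\eps}(t)=\int \phi_\eps(t-\Phi(x,y))\chi_j(x,y)\,d\mu(x)=:R_t^\eps\mu(y),
\end{equation*}
a smoothed generalized Radon transform averaging $\mu$ over the surface $\{x:\Phi(x,y)=t\}$. Then
\begin{equation*}
\int\!\!\int |\nu^j_{y,\eps}(t)|^2\,dt\,d\mu(y)=\int\Big(\int |R_t^\eps\mu|^2\,d\mu\Big)dt,
\end{equation*}
where $t$ ranges over a compact interval because $\Phi$ is bounded on the compact patch.

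The key step, which is exactly the fixed-parameter computation in \cite{GIT19}, is the bound
\begin{equation*}
\int |R_t\mu|\,\overline{R_t\mu}\,d\mu\lesssim \langle R_t\mu,\mu\rangle.
\end{equation*}
Concretely, $\int R_t\mu\cdot R_t\mu\,d\mu$ is controlled by pairing $R_t^*(\mu\cdot R_t\mu)$ against $\mu$. More simply, by Plancherel-type duality one writes
\begin{equation*}
\int |R_t\mu|^2\,d\mu=\langle R_t\mu, R_t\mu\cdot\mu\rangle.
\end{equation*}
Using that $R_t$ is an FIO of order $-(d-1)/2$, so that $R_t$ maps $H^{-\sigma}\to H^{-\sigma+(d-1)/2}$, and that $\mu\in H^{-(d-s)/2}$ (finite $s$-energy), one bounds this by $\|\mu\|_{H^{-(d-s)/2}}^2$ times a multiplier estimate. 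The condition $s>(d+1)/2$ is exactly what makes $-(d-s)/2+(d-1)/2>(d-s)/2$, so that $R_t\mu$ is a continuous function bounded by $I_s(\mu)^{1/2}$, uniformly in $t$ and $\eps$. Indeed $R_t\mu\in H^{\sigma}$ with $\sigma=(s-1)/2>d/2\cdot$—the cleaner route is $\sup_y|R^\eps_t\mu(y)|\lesssim\|\widehat{R_t\mu}\|_{L^1}$, estimated by Cauchy--Schwarz against $|\xi|^{-(d-1)}|\xi|^{-(s-d)}$ weights. That route only needs $s>(d+1)/2$ if done carefully; otherwise fall back on the $L^2(\mu)$ trace form as in \cite{GIT19}: $\int|R_t\mu|^2d\mu\lesssim I_s(\mu)$, uniformly in $t\in$ compact and $\eps$. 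Integrating in $t$ then gives the desired finite bound, and Fatou's lemma as $\eps\to0$ yields $\nu^j_y\in L^2$ for $\mu$-a.e.\ $y$.

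\textbf{Main obstacle.} I expect the hardest step to be justifying this uniform trace-type estimate
\begin{equation*}
\int|R_t^\eps\mu|^2\,d\mu\lesssim I_s(\mu)
\end{equation*}
for a measure, rather than for Lebesgue-absolutely-continuous data. I would first try to quote the estimate from the proof of \cite[Theorem~1.1]{GIT19} verbatim. Its uniformity in $t$ comes from the hypothesis of uniform smoothing on compact parameter intervals, and its uniformity in $\eps$ comes from the fact that the mollified operators form a bounded family of FIOs of the same order.
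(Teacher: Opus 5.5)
Your outer architecture matches the paper's. You regularize the pinned pushforward in the level parameter, reduce to a bound for $\int\!\int|R_t^\eps\mu(y)|^2\,dt\,d\mu(y)$ that is uniform in $\eps$, pass to the limit, and use the countable patch decomposition to find a nonzero localized piece for almost every pin.

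\textbf{The gap.} The central estimate is never established.

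\emph{The sup-norm route is false.} $R_t\mu$ only lies in $H^{(s-1)/2}$, and $(s-1)/2\leq(d-1)/2<d/2$, so there is no embedding into $C^0$. Your claim $\sigma=(s-1)/2>d/2$ would require $s>d+1$. Likewise, your Cauchy--Schwarz bound for $\|\widehat{R_t\mu}\|_{L^1}$ leads to $\int|\xi|^{1-s}\,d\xi$, which diverges because $s<d+1$.

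\emph{The fallback cannot be quoted from \cite{GIT19}.} Their unpinned argument only controls the bilinear pairing $\langle R_t^\eps\mu,\mu\rangle$. That pairing needs nothing beyond $\mu\in H^{(s-d)/2}$ and $R_t\mu\in H^{(s-1)/2}$. The pinned quantity $\int|R_t^\eps\mu|^2\,d\mu$ is different: it is an $L^2(\mu)$ trace of $R_t^\eps\mu$. A trace inequality of this kind is a property of the \emph{pin} measure, and finite $s$-energy alone does not provide it. A finite-energy measure can be far from $s$-Frostman. The natural bound therefore involves a Frostman constant of the pin measure, not just $I_s(\mu)$.

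\textbf{The missing idea.} This is the reduction the paper makes at the start of Proposition~\ref{prop:scalar-double-pinning}.

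By Lemma~\ref{lem:energy-trace}(3), the sets $E_N=\{x:\int|x-z|^{-s}\,d\mu(z)\leq N\}$ exhaust $\mu$, and each $\mu|_{E_N}$ is $s$-Frostman with constant $\lesssim N$. Since the conclusion is almost everywhere in $y$, it suffices to bound $\int\!\int|R_t^\eps\mu|^2\,dt\,d\mu|_{E_N}$, keeping the full $\mu$ as the input measure.

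Then decompose the input as $\mu=\sum_i\mu_i$. Input--output frequency compatibility (Lemma~\ref{lem:scalar-frequency-compatibility}) keeps $R_t^\eps\mu_i$ at frequency $\approx2^i$, up to rapidly decaying tails. Combining the Frostman trace bound, the fixed-parameter estimate and the energy bound gives
\[
\|R_t^\eps\mu_i\|_{L^2(\mu|_{E_N})}\lesssim N^{1/2}\,2^{i(d-s)/2}\,2^{-i(d-1)/2}\,2^{i(d-s)/2}\,I_s(\mu)^{1/2}=N^{1/2}I_s(\mu)^{1/2}\,2^{-i(s-\frac{d+1}{2})}.
\]
This is summable exactly when $s>(d+1)/2$.

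Your inequality $(s-1)/2>(d-s)/2$ is the right numerology. However, it is the condition for $H^{(s-1)/2}$ functions to have an $L^2$ trace on an $s$-Frostman measure, not a continuity condition. With this in place, your Fatou step gives $\nu_y^j\in L^2$ for $\mu$-a.e.\ $y\in E_N$. Taking the union over $N$ and $j$ then finishes the proof.
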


Our first result extends the pinned conclusion from positive Lebesgue measure to nonempty interior.

\begin{theorem}[Pinned nonempty interior for scalar configurations]\label{thm:intro-pinned-interior}
Let $d\geq3$, and assume the remaining hypotheses of Corollary~\ref{cor:intro-pinned-positive}. If
\begin{equation*}
s>\frac{d+2}{2},
\end{equation*}
then $\Delta_\Phi^y(E)$ has nonempty interior for $\mu$-almost every $y\in E$.
\end{theorem}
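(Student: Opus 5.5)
We follow the Peres--Schlag strategy for pinned distances and adapt it to the Fourier integral operator setting. First localize: by countable additivity it suffices to work on one regular patch $U\times V$, with $\mu_1=\mu|_U$ and $\mu_2=\mu|_V$. For fixed $y$, define the pinned pushforward measure $\nu^y=\Phi(\cdot,y)_*\mu_1$ on $\R$. Its mollified density is
\begin{equation*}
\nu^y_\eps(t)=\int R^\eps_t\mu_1(y)\,\ldots,
\end{equation*}
that is, $\nu^y_\eps(t)=\eps^{-1}\mu_1(\{x:|\Phi(x,y)-t|<\eps/2\})$. This can be written as $\mathcal R_t(\mu_1*\phi_\eps)(y)$ up to harmless errors, where $\mathcal R_t f(y)=\int_{\{\Phi(x,y)=t\}} f\,\psi\,d\sigma$ is the localized generalized Radon transform. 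We aim to show the following: for $\mu_2$-a.e. $y$, the family $\nu^y_\eps$ converges in $C(I)$ on a compact interval $I$ of regular values as $\eps\to0$. The limit is then a continuous density of $\nu^y$ restricted to $I$. Since $\nu^y(I)>0$ for a positive-measure set of pins (or after shifting $I$), the density is positive somewhere, and continuity gives an open interval inside $\Delta_\Phi^y(E)$.

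To get uniform convergence, we use the Sobolev embedding $H^{1/2+\delta}(\R)\subset C$. Concretely, we bound
\begin{equation*}
\int \|\nu^y_\eps-\nu^y_{\eps'}\|_{H^{1/2+\delta}(I)}^2\,d\mu_2(y),
\end{equation*}
or more simply $\int\!\int_I |\partial_t^{\sigma}\nu^y_\eps(t)|^2\,dt\,d\mu_2(y)$ with $\sigma=1/2+\delta$, after a Littlewood--Paley decomposition $\mu_1=\sum_j P_j\mu_1$. The key input is the one-frequency-loss estimate described in the abstract. Differentiating $\mathcal R_t$ in $t$ produces an FIO of the same canonical relation but order one higher: $\partial_t\mathcal R_t$ maps $H^{r}\to H^{r+(d-1)/2-1}$ uniformly in $t\in I$. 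Interpolating gives a loss of $2^{j\sigma}$ on the piece $P_j\mu_1$.

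Pairing against $\mu_2$ by duality, we need $\mathcal R_t P_j\mu_1$ in $H^{-s/2\cdot}$-type spaces. Specifically, we use
\begin{equation*}
\int |g|^2\,d\mu_2\lesssim 2^{j(d-s)}\|g\|_2^2
\end{equation*}
for $g$ essentially frequency-localized at scale $2^j$, which follows from finite $s$-energy via the trace inequality. Combining this with $\|P_j\mu_1\|_{H^{-(d-s)/2}}\lesssim$ the energy, the $j$-th term is bounded by
\begin{equation*}
2^{j(2\sigma-(d-1)+(d-s)+(d-s))}\,I_s(\mu)\;=\;2^{j(2\sigma+d+1-2s)}\,I_s(\mu).
\end{equation*}
This is summable precisely when $s>(d+1)/2+\sigma$, which approaches $(d+2)/2$ as $\delta\to0$. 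Summability gives that the Cauchy differences $\nu^y_{2^{-k}}-\nu^y_{2^{-k-1}}$ are square-summable in $H^\sigma$ for $\mu_2$-a.e. $y$. By Borel--Cantelli along the dyadic sequence we get a.e. convergence in $C(I)$, and weak convergence of $\nu^y_\eps$ to $\nu^y$ identifies the limit.

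The main obstacle is the trace/restriction step. The trace bound of $|g|^2$ against $\mu_2$ is not sharp for an arbitrary $g$. We must exploit the fact that $\mathcal R_tP_j\mu_1$ is microlocally at frequency $\sim2^j$, since nondegeneracy makes the canonical relation a local canonical graph, modulo Schwartz tails. We must also control the $t$-derivative uniformly, including the cutoffs and the $t$-dependence of the surface measure. Here we would follow the Greenleaf--Iosevich--Taylor fixed-parameter calculation used for Corollary~\ref{cor:intro-pinned-positive}, inserting the extra $t$-derivative. We would verify the one-order loss by writing $\mathcal R_t f(y)=\int\!\int e^{i\tau(\Phi(x,y)-t)}f(x)a\,dx\,d\tau$: each $\partial_t$ brings down a factor $\tau\sim2^j$. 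The restriction $d\geq3$ enters to keep the exponents consistent, since it ensures $\sigma<(d-1)/2$ so that the derivative operator still has nonpositive order in the interpolation.
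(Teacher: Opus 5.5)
Your route is the paper's. Theorem~\ref{thm:intro-pinned-interior} is deduced from Proposition~\ref{prop:scalar-double-pinning}, and your $j$-th term exponent $2\sigma+d+1-2s$ is exactly its bookkeeping. That bookkeeping is: fixed-parameter gain $2^{-j(d-1)/2}$, a factor $2^j$ per $t$-derivative from Lemma~\ref{lem:scalar-parameter-derivative}, output frequency compatibility, a trace bound in the pin variable, and $\|P_j\mu\|_2\lesssim 2^{j(d-s)/2}$. Together these give the condition $\eta=s-\frac{d+1}{2}>\frac12$.

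There is, however, a genuine gap in the trace step. The inequality $\int|g|^2\,d\mu_2\lesssim 2^{j(d-s)}\|g\|_2^2$ for $g$ frequency-localized at $2^j$ does not follow from finite $s$-energy. Testing it on a function concentrated on a ball of radius $2^{-j}$ about $x$ (with Fourier support at scale $2^j$) shows that it forces $\mu_2(B(x,2^{-j}))\lesssim 2^{-js}$ uniformly in $x$. A finite-energy measure need not satisfy this. The missing step is the reduction in Lemma~\ref{lem:energy-trace}(3). Restricted to the sets $\{x:\int|x-z|^{-s}\,d\mu(z)\le N\}$, which exhaust $\mu$, the pin measure is $s$-Frostman. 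Since the conclusion is almost-everywhere in $y$, you may run the argument with each restriction as the pin measure. The point measure $\mu_1$ stays as it is, since there only the energy bound is used.

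There are three smaller points. First, your limiting argument differs from the paper's. The paper only needs the uniform bound in $L^2(d\lambda;H^\beta(J))$, takes a weak limit there, and applies $H^\beta\subset C^0$ fiberwise. Your Borel--Cantelli scheme works if $\nu^y_{2^{-k}}$ is the spatial truncation $\mathcal R_t(P_{\le k}\mu_1)(y)$, since then consecutive differences are single Littlewood--Paley pieces with geometrically decaying bounds. But the level-parameter box average you wrote first is not equal to this ``up to harmless errors.'' For that average, a uniform bound says nothing about Cauchy differences. You would need an extra $\min\{1,2^j\eps\}$ gain from the derivative bound, or a compactness argument such as Fatou in $y$ plus Arzel\`a--Ascoli along a $y$-dependent subsequence.

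Second, the theorem asserts nonempty interior for $\mu$-a.e.\ pin, not for a positive-measure set of pins. You need the Fubini step: for a.e.\ $y$ the section of the patch cover has full $\mu$-measure, so some patch $U_n\times V_n$ with $y\in V_n$ has $\mu(U_n)>0$. You must also intersect the countably many full-measure sets of pins.

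Third, your explanation of $d\geq3$ is wrong. A positive order of $\partial_t^\sigma\mathcal R_t$ would be harmless in these frequency-localized estimates. The restriction only discards the vacuous planar case, since no finite-energy measure on $\R^2$ has $s>2$.
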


The restriction $d\geq3$ only removes the vacuous planar case, since a finite-energy measure supported on a subset of $\R^2$ cannot satisfy $s>2$. The proof uses the fact that differentiation in the level parameter preserves the canonical relation and raises the order of the Fourier integral operator by one. The resulting Sobolev regularity in $t$ gives a continuous pinned density.

\begin{corollary}[Pinned generalized norm distances]\label{cor:intro-generalized-distances}
Let $d\geq3$, let $B\subset\R^d$ be a symmetric bounded convex body with smooth boundary and everywhere nonvanishing Gaussian curvature, and let
\begin{equation*}
\Delta_B^y(E)=\{\|x-y\|_B:x\in E\}.
\end{equation*}
Let $\mu$ be a probability measure supported on a compact set $E\subset\R^d$ with finite $s$-energy. If
\begin{equation*}
s>\frac{d+2}{2},
\end{equation*}
then $\Delta_B^y(E)$ has nonempty interior for $\mu$-almost every $y\in E$.
\end{corollary}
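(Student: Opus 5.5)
The plan is to deduce Corollary~\ref{cor:intro-generalized-distances} from Theorem~\ref{thm:intro-pinned-interior} with $\Phi(x,y)=\|x-y\|_B$. All that needs checking is the hypothesis inherited from Corollary~\ref{cor:intro-pinned-positive}: $E\times E$ must be covered, up to a $\mu\times\mu$-null set, by countably many regular patches on which the generalized Radon transforms
\[
R_t f(y)=\int_{\{x:\|x-y\|_B=t\}} f(x)\,\psi(x,y)\,d\sigma_{y,t}(x)
\]
are nondegenerate Fourier integral operators of order $-(d-1)/2$, uniformly for $t$ in compact subintervals of $(0,\infty)$.

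\textbf{Removing the diagonal.} Since $B$ is symmetric, convex and has smooth boundary, the gauge $\|\cdot\|_B$ is smooth on $\R^d\setminus\{0\}$. Hence $\Phi$ is smooth off the diagonal $D=\{x=y\}$. We have $(\mu\times\mu)(D)=0$, because finite $s$-energy with $s>0$ forces $\mu$ to have no atoms; by Fubini, $(\mu\times\mu)(D)=\int\mu(\{y\})\,d\mu(y)=0$. The complement of $D$ is a countable union of compact sets
\[
\{(x,y): 2^{-j-1}\le \|x-y\|_B\le 2^{-j+1}\},
\]
and each is covered by finitely many small patches. On each patch $\Phi$ is smooth and takes values in a compact subinterval of $(0,\infty)$.

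\textbf{Regularity and nondegeneracy on a patch.} Because $\nabla_x\Phi\neq0$ away from the diagonal (Euler's identity gives $\nabla\|z\|_B\cdot z=\|z\|_B$), each level set is a smooth hypersurface, namely the translate and dilate $y+t\,\partial B$. For such translation-invariant averages the operator $R_t$ is convolution with $\psi\,\sigma_{t\partial B}$. Its canonical relation is
\[
\{(y,\eta;x,\eta):\ x-y\in t\,\partial B,\ \eta\parallel n(x-y)\},
\]
which projects locally diffeomorphically to both factors exactly when the Gauss map of $\partial B$ is a local diffeomorphism. This holds precisely because the Gaussian curvature is everywhere nonzero. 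Equivalently, the Monge--Ampère/rotational curvature determinant of $\Phi$ is nonvanishing. It reduces to the Gaussian curvature of $\partial B$ times a nonzero factor, and this is the one computation I would carry out explicitly. Standard stationary phase then gives $|\widehat{\sigma_{t\partial B}}(\xi)|\lesssim|\xi|^{-(d-1)/2}$, uniformly for $t$ in compacta, so $R_t$ is a nondegenerate FIO of order $-(d-1)/2$ with uniform bounds. The cutoffs $\psi$ from the patch decomposition only introduce smooth compactly supported amplitudes.

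\textbf{Conclusion and main obstacle.} With the hypotheses verified and $d\ge3$, $s>(d+2)/2$, Theorem~\ref{thm:intro-pinned-interior} yields nonempty interior of $\Delta_B^y(E)$ for $\mu$-a.e.\ $y$. The only real obstacle is confirming that the uniformity in $t$ and the nondegeneracy survive the patch localization near the diagonal. This is handled by working dyadically in $\|x-y\|_B$: on each dyadic shell, $t$ is confined to a compact interval bounded away from $0$, so the uniform constants are allowed to depend on the shell.
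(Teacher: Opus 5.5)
Your proposal is correct and follows essentially the same route as the paper. You discard the diagonal as a $\mu\times\mu$-null set because finite energy rules out atoms, and cover the rest of $E\times E$ by regular patches on which the nonvanishing Gaussian curvature of $\partial B$ and stationary phase give smoothing of order $(d-1)/2$, uniformly for $t$ in compact subintervals of $(0,\infty)$. You then invoke the scalar pinned machinery; the paper cites Corollary~\ref{cor:equal-dimensional-scalar} together with the localization argument of Section~\ref{sec:scalar-positive}, which is exactly how Theorem~\ref{thm:intro-pinned-interior} is proved.

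Two imprecisions are cosmetic. Your dyadic shells are compact only after intersecting with $E\times E$. With a general patch cutoff $\psi(x,y)$, the localized operator is not literally a convolution. Neither matters, because what is actually used is the canonical-graph Fourier integral operator estimate, which you also state.
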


Iosevich, Mourgoglou and Taylor proved that the unpinned generalized distance set contains an interval when $\dimH(E)>(d+1)/2$ \cite{IosevichMourgoglouTaylor}; this is also Corollary~1.2 of \cite{GIT19}. The pinned framework of Iosevich, Taylor and Uriarte-Tuero gives positive Lebesgue measure for $\mu$ a.e. $y\in E$ at the same threshold \cite{IosevichTaylorUriarteTuero}. Corollary~\ref{cor:intro-generalized-distances} gives the corresponding $\mu$-a.e. pinned non-empty interior conclusion at the higher threshold $\dimH(E)>(d+2)/2$. For the Euclidean norm, $(d+2)/2$ is the classical pinned non-empty interior threshold from Peres and Schlag \cite{PeresSchlag}.

\begin{corollary}[Variable-coefficient and Riemannian distances]\label{cor:intro-variable-distances}
Let $d\geq3$, let $E\subset\R^d$ be compact, and let $\mu$ be a probability measure supported on $E$ with finite $s$-energy. Let $\phi$ be smooth on a countable collection of open subsets whose union contains $E\times E$ up to a set of $\mu\times\mu$ measure zero. Assume that on each such subset
\begin{equation*}
\nabla_x\phi(x,y)\neq0,
\qquad
\nabla_y\phi(x,y)\neq0,
\end{equation*}
and that the Phong--Stein rotational-curvature condition holds on the level sets under consideration. If
\begin{equation*}
s>\frac{d+2}{2},
\end{equation*}
then, for $\mu$-almost every $y\in E$, the set
\begin{equation*}
\Delta_\phi^y(E)=\{\phi(x,y):x\in E\}
\end{equation*}
has nonempty interior.

In particular, let $(M,g)$ be a compact $d$-dimensional Riemannian manifold without boundary, and let $\rho_g$ denote the Riemannian distance induced by $g$. Let $E\subset M$ be compact, and let $\mu$ be a probability measure supported on $E$ with finite $s$-energy. Suppose that $E\times E$ is covered up to a $\mu\times\mu$ null set by countably many coordinate patches on which $\rho_g$ is smooth and the associated spherical mean operators have canonical graph relations. If $s>(d+2)/2$, then
\begin{equation*}
\Delta_{\rho_g}^y(E)
=
\{\rho_g(x,y):x\in E\}
\end{equation*}
has nonempty interior for $\mu$-almost every $y\in E$.
\end{corollary}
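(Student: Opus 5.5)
The corollary will follow from Theorem~\ref{thm:intro-pinned-interior}: we only have to check that the hypotheses of Corollary~\ref{cor:intro-pinned-positive} hold for $\Phi=\phi$. We work patch by patch. On each open set $U$ of the given countable cover, fix a relatively compact regular patch $U_1\times U_2\Subset U$. For $t$ in a compact interval $I$, consider the localized generalized Radon transform
\begin{equation*}
R_tf(x)=\int_{\{y:\phi(x,y)=t\}} f(y)\,\psi(x,y)\,d\sigma_{x,t}(y),
\end{equation*}
where $\psi$ is a cutoff adapted to the patch.

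The steps, in order, are as follows.
\begin{enumerate}
\item Since $\nabla_y\phi\neq0$, each level set $\{y:\phi(x,y)=t\}$ is a smooth hypersurface depending smoothly on $(x,t)$. Writing
\begin{equation*}
R_tf(x)=\int\!\!\int e^{i\tau(\phi(x,y)-t)}\,\psi(x,y)\,|\nabla_y\phi|\,f(y)\,d\tau\,dy
\end{equation*}
exhibits $R_t$ as a Fourier integral operator of order $-(d-1)/2$. Its canonical relation is
\begin{equation*}
C_t=\{(x,\tau\nabla_x\phi;\,y,-\tau\nabla_y\phi):\phi(x,y)=t,\ \tau\neq0\}.
\end{equation*}
\item The condition $\nabla_x\phi\neq0$ keeps $C_t$ away from the zero sections. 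The Phong--Stein rotational curvature condition states that the $(d+1)\times(d+1)$ Monge--Amp\`ere determinant
\begin{equation*}
\det\begin{pmatrix}0 & \nabla_y\phi\\ \nabla_x\phi^{T} & \partial_x\partial_y\phi\end{pmatrix}
\end{equation*}
is nonzero. This is exactly the condition that $C_t$ is a local canonical graph. Hörmander's $L^2$-Sobolev theorem then gives $R_t:H^s\to H^{s+(d-1)/2}$, with constants uniform for $t\in I$ because everything depends smoothly on $t$ over a compact set.
\item With these hypotheses verified, Theorem~\ref{thm:intro-pinned-interior} yields the conclusion for $\mu$-a.e.\ $y$.
\end{enumerate}

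For the Riemannian statement, we apply the first part with $\phi=\rho_g$ on coordinate patches where $\rho_g$ is smooth. Away from the diagonal, and inside the injectivity region that smoothness forces, Gauss's lemma gives $|\nabla_x\rho_g|_g=|\nabla_y\rho_g|_g=1$, so both gradients are nonzero. The level sets of $\rho_g(x,\cdot)$ are geodesic spheres and $R_t$ is the spherical mean operator. The assumption that these operators have canonical graph relations is equivalent, by the computation in step 2, to nonvanishing rotational curvature. The diagonal $\{x=y\}$ is $\mu\times\mu$-null when $s>(d+2)/2>0$, since $\mu$ has no atoms. The patches therefore cover $E\times E$ up to a null set, and the first part applies.

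The main obstacle is uniformity. Theorem~\ref{thm:intro-pinned-interior} is applied to countably many patches and, presumably, also needs the $t$-derivative estimate: $\partial_tR_t$ should be an FIO of order $-(d-3)/2$ with the same canonical relation. This holds because differentiating in $t$ brings down a factor of $\tau$ in the amplitude while leaving the phase unchanged. We have to ensure that the constants and the canonical-graph property hold uniformly on compact $t$-intervals inside the range of $\phi$ on each patch. To arrange this, we shrink each patch so that the nondegeneracy determinants are bounded below on its closure, which keeps the cover countable.
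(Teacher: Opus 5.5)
Your proposal is correct and follows essentially the same route as the paper. The paper identifies the localized transforms as Fourier integral operators of order $-(d-1)/2$ with local canonical graphs, obtains the $t$-derivative bound from Lemma~\ref{lem:scalar-parameter-derivative} (this is your observation that $\partial_t$ only brings down a factor of $\tau$), and concludes via Proposition~\ref{prop:scalar-double-pinning} on each patch together with the localization argument of Section~\ref{sec:scalar-positive}; that combination is what Theorem~\ref{thm:intro-pinned-interior} packages, and the Riemannian case is handled the same way on the assumed coordinate patches.
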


The variable-coefficient and Riemannian pinned results of Iosevich, Taylor and Uriarte-Tuero give positive Lebesgue measure at the threshold $(d+1)/2$ and also control the exceptional set of pins \cite{IosevichTaylorUriarteTuero}. Corollary~\ref{cor:intro-variable-distances} gives the corresponding nonempty-interior conclusion after the additional half-dimensional hypothesis. In the unpinned setting, the local Riemannian non-empty interior theorem appears as Corollary~1.3 of \cite{GIT19}.

\begin{corollary}[Pinned dot products and bilinear forms]\label{cor:intro-dot-products}
Let $d\geq3$, let $A$ be an invertible real $d\times d$ matrix, and let $\mu$ be a probability measure supported on a compact set $E\subset\R^d$ with finite $s$-energy. Assume that
\begin{equation*}
(\mu\times\mu)\big(\{(x,y)\in E\times E:x^TAy=0\}\big)=0.
\end{equation*}
If
\begin{equation*}
s>\frac{d+2}{2},
\end{equation*}
then, for $\mu$-almost every $y\in E$, the pinned bilinear-form set
\begin{equation*}
\{x^TAy:x\in E\}
\end{equation*}
has nonempty interior.
\end{corollary}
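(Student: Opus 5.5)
The plan is to deduce this from Theorem~\ref{thm:intro-pinned-interior} with $\Phi(x,y)=x^TAy$. All that needs checking is the structural hypothesis of Corollary~\ref{cor:intro-pinned-positive}: up to a $\mu\times\mu$ null set, $E\times E$ is covered by countably many regular patches on which the Radon transforms attached to the level sets $x^TAy=t$ are nondegenerate FIOs of smoothing order $(d-1)/2$, uniformly for $t$ in compact intervals. The theorem then gives nonempty interior of $\{x^TAy:x\in E\}$ for $\mu$-a.e.\ $y$.

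\textbf{Step 1 (patches).} Let $Z=\{(x,y):x^TAy=0\}$, which is $\mu\times\mu$-null by assumption. Also, $\mu(\{0\})=0$ because $\mu$ has finite $s$-energy with $s>0$. Hence $\{x=0\}\cup\{y=0\}$ is null, and we may cover $(E\times E)\setminus(Z\cup\{x=0\}\cup\{y=0\})$ by countably many small open boxes $U\times V$. On each box we require $x\neq0$, $y\neq0$, and that $x^TAy$ stays in a compact interval avoiding $0$.

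\textbf{Step 2 (nondegeneracy).} Set $\Phi(x,y)=x^TAy$. Then
\begin{equation*}
\nabla_x\Phi=Ay\neq0,\qquad \nabla_y\Phi=A^Tx\neq0,
\end{equation*}
since $A$ is invertible. So every level set $\{x:\Phi(x,y)=t\}$ is a smooth hyperplane piece, and the associated operator is a generalized Radon transform. Its Schwartz kernel is $\delta(\Phi(x,y)-t)$ times a cutoff, i.e.\ an FIO of order $-(d-1)/2$. We check the Phong--Stein rotational curvature condition, which is equivalent to the canonical relation being a local graph. This amounts to nonvanishing of the Monge--Amp\`ere determinant
\begin{equation*}
\det\begin{pmatrix}0 & \nabla_y\Phi\\ (\nabla_x\Phi)^T & \nabla^2_{xy}\Phi\end{pmatrix}
=\det\begin{pmatrix}0 & x^TA\\ Ay & A^T\end{pmatrix}.
\end{equation*}
This is a bordered determinant. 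It equals $-\det(A^T)\,(x^TA)(A^T)^{-1}(Ay)=-\det A\; x^TAy$. (The exact sign is irrelevant; the formula can be confirmed via the Schur complement.) It is therefore nonzero exactly off $Z$, and it is bounded below on each patch of Step 1, uniformly in $t$ in the relevant compact interval.

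\textbf{Step 3 (apply the theorem).} This is precisely the setting of Corollary~\ref{cor:intro-variable-distances} with $\phi=\Phi$. The gradient conditions and the rotational curvature condition hold on each patch, so the conclusion follows for $s>(d+2)/2$ and $d\ge3$. Equivalently, one can invoke Theorem~\ref{thm:intro-pinned-interior} directly, since the standard Sobolev estimates for nondegenerate FIOs give the smoothing order $(d-1)/2$.

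The main obstacle is Step 2. First, the determinant must be computed correctly, to confirm that the degeneracy locus is exactly $\{x^TAy=0\}$, which is the reason for the null-set hypothesis. Second, the estimates must be uniform: a constant bound for the FIO on each patch requires $|x^TAy|$, $|x|$ and $|y|$ to be bounded below there. This is why the patches are chosen compactly inside the complement of the bad set, so that the countable decomposition yields the $\mu$-a.e.\ statement.
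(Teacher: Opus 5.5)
Your proposal follows the paper's proof. You use the nonvanishing gradients $Ay$ and $A^Tx$, nondegenerate rotational curvature on compact patches where $x^TAy$ is bounded away from zero, and the null-set hypothesis to cover $E\times E$ by countably many such patches. You then apply the pinned interior principle (Proposition~\ref{prop:scalar-double-pinning}, equivalently Corollary~\ref{cor:intro-variable-distances}) with $\gamma=(d-1)/2$.

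There is one slip in Step 2, whose determinant the paper leaves as ``a direct computation.'' With your bordering (top row $\nabla_y\Phi=x^TA$, left column $\nabla_x\Phi=Ay$), the mixed block must be $(\partial_{x_i}\partial_{y_j}\Phi)=A$, not $A^T$. The identity $x^TA(A^T)^{-1}Ay=x^TAy$ that you then use fails unless $A$ is symmetric: for $A=I+e_1e_2^T$ and $x=y=e_1$, the left side is $0$ while $x^TAy=1$. With the correct block, the Schur complement gives $-\det A\,(x^TA)A^{-1}(Ay)=-\det A\,x^TAy$ directly. The two errors cancel, so your conclusion that the degeneracy locus is exactly $\{x^TAy=0\}$ is right.
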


For $A=I$, this gives a pinned dot-product statement for $\Phi(x,y)=x\cdot y$. In that case this is already a consequence of the work of Bright, Marshall and Senger, see Theorem 1.2 (3) in \cite{BrightMarshallSenger}. 

\subsection{Volumes of simplices}

For $x_0,\ldots,x_k\in\R^d$, $2\leq k\leq d$, write
\begin{equation*}
\operatorname{Vol}_k(x_0,\ldots,x_k)
=
|(x_1-x_0)\wedge\cdots\wedge(x_k-x_0)|.
\end{equation*}
This is $k!$ times the Euclidean volume of the corresponding $k$-simplex in $\mathbb{R}^d$. A classic variant of the Falconer distance problem asks for a compact set $E\subset\mathbb{R}^d$, $d\geq 2$, how large $\dimH(E)$ needs to be to guarantee that the volume set
\begin{equation*}
\Vol_k(E)
=
\big\{
|(x_1-x_0)\wedge\cdots\wedge(x_k-x_0)|:
x_0,x_1,\ldots,x_k\in E
\big\}
\end{equation*}
has positive Lebesgue measure.

Initially, the focus was on the case $k=d$, where the $k$-simplex is in its natural ambient space. The first couple of results established the thresholds $\frac{3}{2}$ for areas of triangles in $\mathbb{R}^2$ \cite{ErdoganHartIosevich} and $\frac{8}{3}$ for volumes of tetrahedra in $\mathbb{R}^3$ \cite{GreenleafIosevichMourgoglou}. In \cite{GrafakosGreenleafIosevichPalsson} Grafakos, Greenleaf, Iosevich and the first listed author of this paper obtained the dimensional thresholds $d-1+\frac{1}{2d}$ if $d$ is even and $d-1+\frac{1}{2(d-1)}$ if $d$ is odd. They further conjectured that the threshold $d-1$ is sharp, as is clearly necessary, as seen by considering a hyperplane. This conjecture was solved by Shmerkin and Yavicoli \cite{ShmerkinYavicoli} who obtained a multiple pin version of the statement where they could pin a whole face of the simplex. For non-empty interior Greenleaf, Iosevich and Taylor in \cite{GIT22} obtained the threshold $\frac{5}{3}$ in $\mathbb{R}^2$ and $d-1+\frac{1}{d}$ in $\mathbb{R}^d$ when $d\geq 3$. Greenleaf, Iosevich and Taylor actually obtained a stronger statement. For a prescribed vertex $x_0\in\R^d$, define the strongly pinned $k$-volume set
\begin{equation*}
\Vol_k^{x_0}(E)
=
\big\{
|(x_1-x_0)\wedge\cdots\wedge(x_k-x_0)|:
x_1,\ldots,x_k\in E
\big\}.
\end{equation*}
Greenleaf, Iosevich and Taylor in \cite{GIT22} introduced the term ``strongly pinned'' to emphasize that the pin is prescribed in advance so any result must hold for all $x_0\in\R^d$. Their theorem gives non-empty interior of $\Vol_d^{x_0}(E)$ for every $x_0\in\R^d$ when $\dimH(E)>d-1+\frac{1}{d}$, $d\geq 3$.

With the resolution of the conjecture by Shmerkin and Yavicoli when $k=d$ the attention has now turned to the case when $k<d$. Gaitan Montejo and the first named author \cite{GaitanPalsson2026} obtained the threshold $\max\{k-1,d-k+1\}$, which is sharp if $k\leq d\leq 2k-2$, as can be seen by considering a $k-1$ dimensional plane and reproduces the result of Shmerkin and Yavicoli when $k=d$.  Borges, Foster, Ou, Romero Acosta and the first named author of this paper \cite{BorgesFosterOuPalssonRomeroAcosta} developed a theory of volume vectors associated with hypergraphs of simplices, extending some earlier work in that direction such as \cite{McDonaldAreas,GaloMcDonald,GIT24,GIT25}. As a byproduct of their methods, they obtained the dimensional threshold $\frac{d+k}{2}$ for guaranteeing that $\Vol_k(E)$ has positive Lebesgue measure even with a whole face pinned. This built on their earlier work on distance graphs \cite{BorgesFosterOuPalssonRomeroAcostaDistGraph}. Using the very recent paper \cite{BorgesOuPasquariello} as input, this dimensional threshold can be further improved to $\frac{d+k-1}{2}+\frac{1}{4}+\frac{2k+1}{4(2d+1)}$ when $k<d$.

Motivated by the multilinear generalized Radon framework of Grafakos, Greenleaf, Iosevich and the first named author \cite{GrafakosGreenleafIosevichPalsson}, we use a different geometric principle. Once one vector is fixed, a $k$-volume factors through orthogonal projection onto its perpendicular hyperplane. Iterating this observation reduces both positive-measure and nonempty-interior questions to a two-vector area theorem. The energy argument behind the reduction is closely related to the classical projection theory of Marstrand and Mattila \cite{Marstrand,MattilaBook}.

The base case for our results is the following strongly pinned non-empty interior result for triangle areas in any dimension.

\begin{theorem}[Strongly pinned triangle areas]\label{thm:area}
Let $d\geq2$, let $E\subset\R^d$ be compact, and let $x_0\in\R^d$. If
\begin{equation*}
\dimH(E)>\frac{d+1}{2},
\end{equation*}
then
\begin{equation*}
\Vol_2^{x_0}(E)
=
\big\{|(x_1-x_0)\wedge(x_2-x_0)|:x_1,x_2\in E\big\}
\end{equation*}
has nonempty interior.
\end{theorem}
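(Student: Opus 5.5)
We translate so that $x_0=0$ and prove a stronger, doubly pinned statement: for $\mu$-almost every $y\in E$, the set $\{|y\wedge x|:x\in E\}$ has nonempty interior. Here $\mu$ is a Frostman probability measure on $E$ with $I_s(\mu)<\infty$ for some $s$ with $(d+1)/2<s<\dimH(E)$.

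The key identity is
\begin{equation*}
|y\wedge x|=|y|\,|P_{y^\perp}x|,
\end{equation*}
where $P_{y^\perp}$ is orthogonal projection onto the hyperplane $y^\perp$. So, for a fixed second vertex $y\neq0$, the area set is $|y|$ times the distance set from the line $\R y$, and the level sets $\{x:|y\wedge x|=t\}$ are cylinders with axis $\R y$. Since $s>1$, $\mu$ gives zero mass to the line $\R x_0$, so $\mu$-a.e.\ $y$ is admissible. It therefore suffices to show that for $\mu$-a.e.\ $y$ the pushforward
\begin{equation*}
\nu_y=(x\mapsto|P_{y^\perp}x|)_*\mu
\end{equation*}
has a continuous density $f_y$ on $(0,\infty)$. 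Then $\int f_y=\mu(E)=1$ forces $f_y>0$ on an open interval, which lies inside $\supp\nu_y$ and hence inside $\{|P_{y^\perp}x|:x\in E\}$. Multiplying by $|y|$ gives the interior point.

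To get continuity, I would follow the Sobolev-in-$t$ strategy of Theorem~\ref{thm:intro-pinned-interior}. I would prove the cylinder-averaging estimate
\begin{equation*}
\int \|\chi f_y\|_{H^{1/2+\eps}(\R)}^2\,d\mu(y)\lesssim I_{s}(\mu)
\end{equation*}
for a cutoff $\chi$ supported in $t\in[a,b]\subset(0,\infty)$. By Sobolev embedding this gives continuity of $f_y$ for a.e.\ $y$. I would write $f_y(t)=\int \hat\mu(\xi)\,\overline{\widehat{\sigma^y_t}(\xi)}\,d\xi$, with $\sigma^y_t$ the surface measure on the cylinder $\{|P_{y^\perp}x|=t\}$ cut off smoothly along the axis. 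Its Fourier transform is essentially concentrated near the slab $\{\xi:|\xi\cdot y|\lesssim1\}$. Inside that slab it decays like $|\xi|^{-(d-2)/2}$, coming from the $(d-2)$-sphere in $y^\perp$. Taking the Fourier transform in $t$ localizes to $|\xi|\sim|\tau|$ up to rapidly decreasing errors. Expanding the square and integrating in $y$ then reduces the estimate to bounding
\begin{equation*}
\int |\hat\mu(\xi)|^2\,|\xi|^{-(d-2)+1+2\eps}\,\mu\bigl(\{y:|y\cdot\xi|\lesssim1\}\bigr)\,d\xi .
\end{equation*}

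The hard step is the $y$-average, which must recover the half derivative lost by the cylinder compared with a sphere. A crude Frostman bound on $\mu$ of a slab of width $|\xi|^{-1}$ is too weak. Instead, I would keep the $y$-integral inside the square: expand $|f_y|^2$ and integrate $d\mu(y)$ first, producing $\hat\mu$ evaluated along the dual variable. Then apply Cauchy--Schwarz with the energy identity $I_s(\mu)=c\int|\hat\mu(\xi)|^2|\xi|^{s-d}\,d\xi$, in the spirit of the Marstrand--Mattila projection argument. This should yield the effective bound $\int|\hat\mu(\xi)|^2|\xi|^{-(d-1)+1+2\eps}\,d\xi\lesssim I_{1+2\eps}(\mu)$. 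That is far below what is needed, and the leftover room is exactly what absorbs the $\mu(y)$ slab factor via a second energy integral. Balancing the two energies (one in $x$, one in $y$, each costing about half the threshold) is where $s>(d+1)/2$ should appear. I expect this bookkeeping, together with justifying the slab localization of $\widehat{\sigma^y_t}$ uniformly in $y$ on compact sets away from $0$, to be the main obstacle.

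Finally, I would localize $y$ to countably many compact pieces $\{|y|\geq 1/n\}$ and $t$ to countably many intervals $[a,b]\subset(0,\infty)$. This gives continuity of $f_y$ on all of $(0,\infty)$ for $\mu$-a.e.\ $y$. Continuity at $t=0$ is not needed, because positivity of $\int_{(0,\infty)}f_y$ follows from $\nu_y(\{0\})=\mu(\R y)=0$.
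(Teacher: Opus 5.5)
\textbf{Gap: the doubly pinned reduction.} The problem is the step you flag as the main obstacle, and it is not a bookkeeping issue. You replace the theorem by the \emph{doubly} pinned statement that $\{|y\wedge x|:x\in E\}$ has nonempty interior for $\mu$-a.e.\ $y$ when $s>(d+1)/2$. That requires
\[
\int\|\chi f_y\|_{H^{1/2+\eps}}^2\,d\mu(y)\lesssim I_s(\mu).
\]
Your available inputs are the $(d-1)/2$ cylinder smoothing, a factor $2^i$ per $t$-derivative, and a Frostman/trace bound in $y$. Your slab estimate $\mu(\{y:|y\cdot\xi|\lesssim1\})\lesssim|\xi|^{d-1-s}$ is exactly such a trace bound. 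With these, the frequency-$2^i$ part of $f_y(t)$ has size $2^{-i(s-(d+1)/2)}$ in $L^2(d\mu(y))$. That suffices for $L^2$ densities. It does not give continuity of $t\mapsto f_y(t)$ for fixed $y$, because that needs a H\"older exponent above $1/2$ in $L^2(d\mu(y))$, e.g.\ via $H^{1/2+\eps}\subset C^0$. That costs an extra $2^{i/2}$ and forces $s>(d+2)/2$. If you insert your crude slab bound into your own displayed integral, you get precisely this threshold. It is Proposition~\ref{prop:scalar-double-pinning} with $\eta>1/2$, i.e.\ Theorem~\ref{thm:double-pinned-area}(2). Recovering that half derivative from the $y$-average is the open problem recorded in Section~\ref{sec:future}. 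The corresponding statement for pinned Euclidean distance sets at $(d+1)/2$ is not known either.

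Your proposed remedy, integrating $d\mu(y)$ inside the square and balancing two energies, is not an argument. The function $\widehat{\sigma^y_t}(\xi)$ depends nonlinearly on $y$, so the $y$-integral does not produce $\hat\mu$ at a dual frequency, and $\int|f_y(t)|^2\,d\mu(y)$ is trilinear in $\mu$. There is also a separate omission. For $d\geq3$ the $(d-1)/2$ smoothing of the cylinder family fails where $x\cdot y=0$, since the rotational curvature is a multiple of $\Psi^2(x\cdot y)^{d-2}$. Those pairs must be excised, as the paper does with Lemma~\ref{lem:orthogonal-pairs-null}.

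\textbf{Missing idea: do not pin $x_2$.} The theorem prescribes only $x_0$, so work with the pushforward of $\chi\,(\mu\times\mu)$ under $(x_1,x_2)\mapsto|x_1\wedge x_2|$. Its regularized density is the scalar pairing $\nu_{\chi,\eps}(t)=\langle T^\chi_{t,\eps}\mu,\mu\rangle$. Split $\mu=\sum_i\mu_i$ and use Cauchy--Schwarz with $\|\mu_i\|_2\lesssim2^{i(d-s)/2}$ on both factors. Then Lemma~\ref{lem:cylinder} and Lemma~\ref{lem:output-frequency} give frequency-$2^N$ contributions $\lesssim2^{N(d+1-2s)/2}$. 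The $t$-difference bound gives a modulus $|t-t'|^\gamma$ for any $\gamma<\min\{1,s-(d+1)/2\}$, uniformly in $\eps$. Since this is a single scalar function of $t$, Arzel\`a--Ascoli gives a continuous density directly. No Sobolev embedding is needed, so there is no half-derivative loss, and the threshold is $(d+1)/2$.
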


When $d=2$ this improves on the threshold $\frac{5}{3}$ obtained by Greenleaf, Iosevich and Taylor in \cite{GIT22} to establish non-empty interior, while additionally including a strong pin. While weaker than the result of Shmerkin and Yavicoli for establishing merely positive Lebesgue measure when $d=2$, it improves in the single pin setting on the threshold $\frac{d+1}{2}+\frac{d+3}{4d+2}$, $d\geq 3$, obtained as a consequence of \cite{BorgesFosterOuPalssonRomeroAcosta,BorgesOuPasquariello}. The key observation is that the fiber obtained after fixing $x_2-x_0=r\theta$ is the cylinder $|P_{\theta^\perp}(x_1-x_0)|=t/r$. Its spherical cross-section and its free axial direction together yield smoothing of order $(d-1)/2$. We thus refer to this as a cylinder estimate for triangles.

The threshold $\frac{d+1}{2}+\frac{d+3}{4d+2}$, $d\geq 3$, for areas of triangles obtained as a consequence of \cite{BorgesFosterOuPalssonRomeroAcosta,BorgesOuPasquariello} is valid for two pins. For $y\neq x_0$, define
\begin{equation*}
\Vol_k^{x_0,y}(E)
=
\big\{
|(y-x_0)\wedge(x_1-x_0)\wedge\cdots\wedge(x_{k-1}-x_0)|:
x_1,\ldots,x_{k-1}\in E
\big\}.
\end{equation*}
With a very similar proof structure we are also able to obtain a two pin theorem.

\begin{theorem}[Doubly pinned triangle areas]\label{thm:double-pinned-area}
Let $E\subset\R^d$ be compact, fix $x_0\in\R^d$, and let $\mu$ be a probability measure supported on $E$ with finite $s$-energy. Then, for $\mu$-almost every $y\in E\setminus\{x_0\}$, the following conclusions hold:
\begin{enumerate}
\item If $d\geq2$ and
\begin{equation*}
s>\frac{d+1}{2},
\end{equation*}
then $\Vol_2^{x_0,y}(E)$ has positive Lebesgue measure.
\item If $d\geq3$ and
\begin{equation*}
s>\frac{d+2}{2},
\end{equation*}
then $\Vol_2^{x_0,y}(E)$ has nonempty interior.
\end{enumerate}
\end{theorem}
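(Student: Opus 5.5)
The plan is to treat the doubly pinned area set as a pinned scalar configuration $\Phi_{x_0}(x,y)=|(y-x_0)\wedge(x-x_0)|$, and to run the same pinned-density argument that underlies Corollary~\ref{cor:intro-pinned-positive} and Theorem~\ref{thm:intro-pinned-interior}. For each pin $y\neq x_0$ define the pushforward $\nu_y=(\Phi_{x_0}(\cdot,y))_*\mu$ on $[0,\infty)$. It suffices to show, for $\mu$-a.e.\ $y$:
\begin{itemize}
\item in case (1), that $\nu_y$ has an $L^2$ density;
\item in case (2), that $\nu_y$ has a continuous density that is not identically zero on $(0,\infty)$.
\end{itemize}
Since $\supp\nu_y\subset\Vol_2^{x_0,y}(E)$, positive measure, respectively nonempty interior, follows.

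\textbf{Step 1: density via integrated Sobolev estimates.} Write $y-x_0=r\theta$ with $r>0$, $\theta\in S^{d-1}$. Then $\Phi_{x_0}(x,y)=r\,|P_{\theta^\perp}(x-x_0)|$. Localize $y$ to a compact patch on which $r\geq c>0$. Also discard the $\mu$-null portion where $x-x_0$ is parallel to $\theta$: since $s>1$, $\mu$ gives zero mass to lines, so for fixed $y$ that set is $\mu$-null. Mollify $\mu$ at scale $\eps$ and set
\begin{equation*}
f_\eps(t,y)=\int \phi_\eps(t-\Phi_{x_0}(x,y))\,d\mu(x)=R_t^\eps\mu(y),
\end{equation*}
where $R_t$ is the generalized Radon transform averaging over the fiber $\{x:\,r|P_{\theta^\perp}(x-x_0)|=t\}$. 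Viewed in $x$ for fixed $y$, this fiber is a cylinder with axis $x_0+\R\theta$ and radius $t/r$.

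\textbf{Step 2: the cylinder estimate (main obstacle).} I will prove that, localized away from $x$ on the axis, from $y=x_0$, and from $t=0$, the family $R_t$ is a Fourier integral operator whose canonical relation projects nondegenerately, so that $R_t:H^{\sigma}_{\mathrm{comp}}\to H^{\sigma+(d-1)/2}_{\mathrm{loc}}$ uniformly in $t$. The natural check is the Phong--Stein rotational curvature of $\Phi_{x_0}$, namely nonvanishing of the determinant
\begin{equation*}
\det\begin{pmatrix}0&\nabla_x\Phi\\ \nabla_y\Phi^{T}&\nabla_x\nabla_y\Phi\end{pmatrix}.
\end{equation*}
Heuristically, for fixed $y$ the cylinder has $d-2$ curved (spherical) directions and one flat axial direction. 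The missing curvature along the axis is compensated by the variation of $\theta$ as $y$ moves, since moving $y$ rotates the axis, and by the radial dependence through $r$. I would compute the determinant in coordinates adapted to $(\theta,\theta^\perp)$, using the homogeneity of $\Phi_{x_0}$ in $y-x_0$. If it degenerates on a lower-dimensional set, I would cover its complement by countably many regular patches, with the degenerate set $\mu\times\mu$-null by the energy hypothesis. This verification is the step I expect to be hardest. Once it holds, the hypotheses of Corollary~\ref{cor:intro-pinned-positive} and Theorem~\ref{thm:intro-pinned-interior} are satisfied for $\Phi_{x_0}$.

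\textbf{Step 3: conclusion.} With the cylinder estimate in hand, apply the two earlier results directly.
\begin{itemize}
\item For (1), apply Corollary~\ref{cor:intro-pinned-positive}. Pairing $\int\!\int|f_\eps(t,y)|^2\,dt\,d\mu(y)$ with the $(d-1)/2$ smoothing bounds it by $\int|\hat\mu(\xi)|^2|\xi|^{-(d-1)}\,d\xi$, which is controlled by $I_s(\mu)$ when $s>(d+1)/2$. Hence $\nu_y\in L^2$ for a.e.\ $y$.
\item For (2), apply Theorem~\ref{thm:intro-pinned-interior}. Differentiating in $t$ raises the order by one, which gives $f(\cdot,y)\in H^{1/2+\delta}_t$ for a.e.\ $y$ when $s>(d+2)/2$, hence a continuous density.
\end{itemize}
It remains to see that this density is nonzero on $(0,\infty)$. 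Its total mass is $\mu(E)$ minus the mass on the axis, which is null. So the density is positive somewhere in $(0,\infty)$, and continuity gives an open interval inside $\Vol_2^{x_0,y}(E)$.
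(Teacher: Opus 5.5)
Your route is the paper's: view $|(y-x_0)\wedge(x-x_0)|$ as a scalar two-point configuration, get $(d-1)/2$ smoothing from the Phong--Stein determinant on regular patches, and apply the pinned scalar principle. The gap is Step~2, which you leave heuristic, together with the fallback you propose there.

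Translate $x_0=0$ and compute with $\Psi=|x\wedge y|^2$, which has the same canonical relation where $t>0$. In the frame $y=re_d$, $x=ve_1+ue_d$, the bordered determinant is $16r^4v^4(-2ur)^{d-2}=(-1)^d2^{d+2}\Psi^2(x\cdot y)^{d-2}$. So for $d\geq3$ it vanishes on the whole hypersurface $\{x\cdot y=0\}$. Geometrically, at axial height $u=0$, rotating the axis in the $d-2$ directions orthogonal to $\theta$ and $P_{\theta^\perp}x$ does not move the normal at $x$ to first order, so the compensation you describe breaks down there.

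Your claim that such a degenerate set is $\mu\times\mu$-null ``by the energy hypothesis'' is not a valid general principle. For $d\geq4$, a measure with $s>(d+1)/2$ may live on a hyperplane such as $\{x_d=0\}$, since $(d+1)/2<d-1$. Then, for example, the hypersurface $\{x_d=y_d\}$ carries all of $\mu\times\mu$. For the orthogonality hypersurface itself, whose slices $y^\perp$ are hyperplanes, nullity depends on its specific structure.

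The missing ingredient is the paper's lemma on nullity of orthogonal pairs. If $\lambda,\eta$ are Frostman (or finite-energy) measures with exponents $a,b$ and $a+b>d$, then $(\lambda\times\eta)(\{x\cdot y=0\})=0$. The proof uses $\eta$-minimal subspaces. These are countable, since distinct ones meet in $\eta$-null sets. Positivity of the orthogonality set would produce one such $V$ with $\lambda(V^\perp)>0$ and $\eta(V)>0$, forcing $a+b\leq\dim V^\perp+\dim V=d$.

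With $a=b=s$ this uses $2s>d$, and Fubini then gives $\mu(\{x:(x-x_0)\cdot(y-x_0)=0\})=0$ for $\mu$-a.e. $y$. Combined with the line-nullity you already use, this makes the patches $a\leq|y-x_0|\leq b$, $c\leq|(x-x_0)\wedge(y-x_0)|\leq C$, $|(x-x_0)\cdot(y-x_0)|\geq c_0$ carry all the pinned mass for a.e. pin. That is what verifies the covering hypothesis of Corollary~\ref{cor:intro-pinned-positive} and Theorem~\ref{thm:intro-pinned-interior}.

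Without this lemma, your final mass count also fails. You only obtain densities for localized pushforwards, not for $\nu_y$ itself, so nonvanishing requires the localized masses to sum to $1$.

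A smaller point: your Step~3 bound by $\int|\widehat\mu(\xi)|^2|\xi|^{-(d-1)}\,d\xi$ omits the Frostman trace loss $|\xi|^{d-s}$ on the pin measure. As written it would only need $s>1$. With the loss included one gets $\int|\widehat\mu(\xi)|^2|\xi|^{1-s}\,d\xi\approx I_{d+1-s}(\mu)$, which is finite when $s>(d+1)/2$.
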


The first part of this theorem not only directly improves on the threshold $\frac{d+1}{2}+\frac{d+3}{4d+2}$, $d\geq 3$, from \cite{BorgesFosterOuPalssonRomeroAcosta,BorgesOuPasquariello}, but further improves on it by having one of the pins be strongly pinned instead of obtaining a $\mu\times\mu$-a.e. statement in the pins. The second part of the theorem further extends this to the non-empty interior setting with a slightly worse threshold. We note that when $d=2$, the positive-measure statement can also be viewed as a pinned dot-product estimate through $|u\wedge v|=|u\cdot v^\perp|$.

We distinguish between two types of double pinning. We call a conclusion \textit{doubly pinned} if one vertex $x_0$ is prescribed and the conclusion holds for almost every choice of the second vertex $y$. 
We call it \textit{doubly strongly pinned} if both $x_0$ and $y$ may be prescribed in advance and the conclusion holds for every admissible ordered pair $(x_0,y)$ with $y\neq x_0$. Thus the adjective ``strongly'' means that the pins can be chosen in advance, not simply that the corresponding vertices are held fixed once the configuration is defined.

Instead of extending the Fourier analytic arguments to $k$-simplices we instead use geometric arguments to reduce the arguments down to triangles. Next is our main geometric reduction theorem.

\begin{theorem}[Geometric reduction to triangle areas]\label{thm:geometric-reduction}
Suppose that, for every $m\geq2$, there are thresholds $X_{\mathrm{pm}}(m)$ and $X_{\mathrm{int}}(m)$ with the following properties: for every compact set $F\subset\R^m$ and every $z_0\in\R^m$,
\begin{enumerate}
\item if
\begin{equation*}
\dimH(F)>X_{\mathrm{pm}}(m),
\end{equation*}
then the strongly pinned triangle-area set $\Vol_2^{z_0}(F)$ has positive Lebesgue measure;
\item if
\begin{equation*}
\dimH(F)>X_{\mathrm{int}}(m),
\end{equation*}
then $\Vol_2^{z_0}(F)$ has nonempty interior.
\end{enumerate}
Then, for every $3\leq k\leq d$, every compact set $E\subset\R^d$, every $x_0\in\R^d$, and every $y\neq x_0$,
\begin{enumerate}
\item if
\begin{equation*}
\dimH(E)>X_{\mathrm{pm}}(d-k+2)+k-2,
\end{equation*}
then $\Vol_k^{x_0,y}(E)$ has positive Lebesgue measure;
\item if
\begin{equation*}
\dimH(E)>X_{\mathrm{int}}(d-k+2)+k-2,
\end{equation*}
then $\Vol_k^{x_0,y}(E)$ has nonempty interior.
\end{enumerate}
\end{theorem}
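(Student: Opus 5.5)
The plan is to peel off one vertex at a time with an exact volume factorization and a Marstrand--Mattila projection step. After $k-2$ steps the $k$-volume with pins $x_0,y$ becomes a strongly pinned triangle area inside a $(d-k+2)$-dimensional subspace.

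\emph{Step 1: algebra.} Translate so that $x_0=0$ and put $u=y/|y|$. Write $P=P_{u^\perp}$ for the orthogonal projection onto $u^\perp\cong\R^{d-1}$. For $v_1,\ldots,v_{k-1}\in\R^d$ we have
\begin{equation*}
|y\wedge v_1\wedge\cdots\wedge v_{k-1}|=|y|\,|Pv_1\wedge\cdots\wedge Pv_{k-1}|,
\end{equation*}
because subtracting multiples of $y$ does not change the wedge. Iterating, choose unit vectors $e_1=u,e_2,\ldots,e_{k-2}$ spanning an orthonormal frame, and let $V$ be the orthogonal complement of their span, so $\dim V=d-k+2$. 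Suppose we fix points $x_1,\ldots,x_{k-3}\in E$ for which the vectors $y,x_1,\ldots,x_{k-3}$ are linearly independent, with the frame chosen by Gram--Schmidt. Then
\begin{equation*}
|y\wedge x_1\wedge\cdots\wedge x_{k-1}|=c\,|P_Vx_{k-2}\wedge P_Vx_{k-1}|,
\end{equation*}
where $c>0$ is the $(k-2)$-volume of $y,x_1,\ldots,x_{k-3}$. Consequently
\begin{equation*}
\Vol_k^{0,y}(E)\supseteq c\cdot\Vol_2^{0}(P_VE),
\end{equation*}
with the triangle area computed in $V\cong\R^{d-k+2}$ and strongly pinned at $z_0=0$. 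Dilation by $c>0$ preserves both positive measure and nonempty interior. It therefore suffices to find $x_1,\ldots,x_{k-3}\in E$ such that, for the resulting $V$, $\dimH(P_VE)>X(d-k+2)$, where $X$ is $X_{\mathrm{pm}}$ or $X_{\mathrm{int}}$.

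\emph{Step 2: choosing the free vertices via projection theorems.} Set $s=\dimH(E)>X(d-k+2)+k-2$. We may assume $X(m)\geq m-1$, so that in particular $s>d-1\ge k-2$; otherwise we replace $X$ by $\max(X,m-1)$, which keeps the hypotheses valid. We proceed one projection at a time. The first projection, onto $u^\perp$, has a \emph{prescribed} direction $u$, so no projection theorem applies to it. Instead I would use the elementary Lipschitz fiber bound: the fibers of $P_{u^\perp}$ are lines, hence
\begin{equation*}
\dimH(P_{u^\perp}E)\geq \dimH(E)-1.
\end{equation*}
This follows from Marstrand's slicing or from $\dimH(A)\le\dimH(PA)+1$ for a projection with one-dimensional kernel. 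Each subsequent direction is $P x_j$ normalized, for a point $x_j$ of the current projected set, and again the kernel is one-dimensional, so the same bound gives a loss of exactly $1$. After $k-2$ steps,
\begin{equation*}
\dimH(P_VE)\geq s-(k-2)>X(d-k+2).
\end{equation*}
It remains to ensure linear independence at each step, that is, $Px_j\neq0$. Since $\dimH>0$ at every stage, the projected set contains points other than the origin, so such a choice is possible. Note that this argument does not even need a generic direction; the loss of $1$ per step is what produces the additive constant $k-2$ in the threshold.

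\emph{Main obstacle.} The delicate point is the dimension bound used at each projection step, $\dimH(P_{w^\perp}F)\ge\dimH(F)-1$ for an arbitrary direction $w$ and an arbitrary set $F$. I would prove it by the standard argument: for $A\subset\R^{m-1}\times\R$ we have $\dimH(A)\le\dimH(\pi A)+1$, which follows from covering each cube of $\pi A$ by a column of cubes and comparing Hausdorff content. In general this inequality requires upper box dimension of the fiber factor, and here that factor is $\R$, so the bound holds. With this inequality the induction closes. The nondegeneracy bookkeeping, namely that $c>0$ and that $x_{k-2}$ and $x_{k-1}$ range over all of $E$, is routine. I would double-check that the identity in Step 1 applies with $x_{k-2},x_{k-1}$ unrestricted, which it does because it is purely algebraic.
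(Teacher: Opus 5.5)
Your proof is correct in substance, and it takes a more elementary route than the paper at the key step.

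\textbf{How the paper does it.} The paper proves an auxiliary strongly pinned statement by induction on the rank. At each inductive step it restricts $F$ to an annulus and bounds $\dimH(\Omega)\geq\dimH(F)-1$ for the radial projection $\Omega$. It then invokes the energy-averaging Lemma~\ref{lem:projection} to select $\theta\in\Omega$ with $\dimH(P_{\theta^\perp}F)\geq\beta$. Only the prescribed vertex $y$ is handled by the Lipschitz bound $\dimH(P_{y^\perp}E)\geq\dimH(E)-1$.

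\textbf{What your route changes.} The paper takes $\beta<\dimH(F)-1$, so the projection lemma gives nothing beyond the Lipschitz bound, which holds for \emph{every} direction. You exploit exactly this. Take any $x_1,\dots,x_{k-3}\in E$ with $y,x_1,\dots,x_{k-3}$ independent. Since the fibres of $P_V$ are $(k-2)$-planes, $\dimH(P_VE)\geq\dimH(E)-(k-2)$, and you could even do this in one step rather than iteratively. The inclusion $c\,\Vol_2^0(P_VE)\subseteq\Vol_k^{0,y}(E)$ then finishes the proof. This removes the projection lemma, the annulus reduction and the Frostman measure on $\Omega$.

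It also gains something. The identity $|w\wedge a\wedge b|=|w|\,|P_Va\wedge P_Vb|$ does not care how the intermediate vertices are chosen. So the same argument gives nonempty interior, at the same threshold, for volume sets in which $x_0$ and $k-2$ further affinely independent vertices are all prescribed in advance. That is more than the paper's closing remark on what the reduction preserves asserts. The paper's route would only pay off if one had lower bounds on $\dimH(\Omega)$ exceeding $\dimH(F)-1$, and it does not use any.

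\textbf{The step you must delete.} This is the reduction ``we may assume $X(m)\geq m-1$, otherwise replace $X$ by $\max(X,m-1)$''. Enlarging $X$ keeps the triangle hypotheses valid, but it raises the threshold in the \emph{conclusion} to $\max\{X(d-k+2)+k-2,\,d-1\}$. As written, you would therefore prove a weaker theorem. For $X_{\mathrm{int}}(m)=(m+1)/2$ this threshold is $d-1$ rather than $(d+k-1)/2$ whenever $k\leq d-2$, which would lose Theorem~\ref{thm:double-pinned-volumes}.

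The reduction is also unnecessary. You only need $\dimH(E)>k-3$, so that $E$ is not contained in the $j$-dimensional span of $y,x_1,\dots,x_{j-1}$ for $j\leq k-3$. This is automatic, because the triangle hypothesis forces $X(m)\geq1$: a segment on a line through $z_0$ has dimension $1$ and pinned area set $\{0\}$. Hence $\dimH(E)>X(d-k+2)+k-2\geq k-1$.
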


The reduction is doubly strongly pinned in its first two vertices: both the base point $x_0$ and the second vertex $y$ may be prescribed in advance. The directions selected during the projection induction affect only the remaining freely varying vertices.

Combining the geometric reduction with the strongly pinned triangle theorem gives the main simplex-volume theorem.

\begin{theorem}[Doubly strongly pinned simplex volumes]\label{thm:double-pinned-volumes}
Let $3\leq k\leq d$, let $E\subset\R^d$ be compact, and fix $x_0\in\R^d$. If
\begin{equation*}
\dimH(E)>\frac{d+k-1}{2},
\end{equation*}
then, for every prescribed $y\in E\setminus\{x_0\}$, the set $\Vol_k^{x_0,y}(E)$ has nonempty interior.
\end{theorem}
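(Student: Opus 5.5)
This follows by combining Theorem~\ref{thm:geometric-reduction} with Theorem~\ref{thm:area}. Theorem~\ref{thm:area} says that for every $m\geq2$, every compact $F\subset\R^m$ and every $z_0\in\R^m$, the set $\Vol_2^{z_0}(F)$ has nonempty interior once $\dimH(F)>(m+1)/2$. So the hypothesis of Theorem~\ref{thm:geometric-reduction} on nonempty interior holds with $X_{\mathrm{int}}(m)=(m+1)/2$.

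For the positive-measure part of that hypothesis, we take $X_{\mathrm{pm}}(m)=(m+1)/2$ as well. This is admissible because nonempty interior implies positive Lebesgue measure. Only conclusion (2) of Theorem~\ref{thm:geometric-reduction} is needed anyway.

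Next, fix $3\leq k\leq d$ and set $m=d-k+2$. Then $m\geq2$, since $k\leq d$, so Theorem~\ref{thm:area} applies in $\R^m$. The dimensional threshold produced by Theorem~\ref{thm:geometric-reduction}(2) is
\begin{equation*}
X_{\mathrm{int}}(d-k+2)+k-2=\frac{d-k+3}{2}+k-2=\frac{d+k-1}{2},
\end{equation*}
which is exactly the hypothesis of the present theorem.

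Finally, for every prescribed $y\in E\setminus\{x_0\}$ we have $y\neq x_0$. Theorem~\ref{thm:geometric-reduction}(2) therefore gives that $\Vol_k^{x_0,y}(E)$ has nonempty interior. The argument needs no Frostman measure or almost-every selection of pins, because both Theorem~\ref{thm:area} and Theorem~\ref{thm:geometric-reduction} are stated for every base point and every second vertex. The conclusion thus holds for every admissible prescribed pair.

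There is no real obstacle at this stage: all the analytic work lives in the cylinder estimate behind Theorem~\ref{thm:area} and the projection induction behind Theorem~\ref{thm:geometric-reduction}. The one point to check is that Theorem~\ref{thm:area} must hold uniformly in every dimension $m\geq2$ and for every pin $z_0$. The projection induction applies it to projected sets in lower-dimensional spaces with pins it does not control, and the statement of Theorem~\ref{thm:area} indeed provides this.
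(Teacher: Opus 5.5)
Your proposal is correct and is exactly the paper's argument: apply the nonempty-interior part of Theorem~\ref{thm:geometric-reduction} with $X_{\mathrm{int}}(m)=(m+1)/2$ from Theorem~\ref{thm:area}, and compute $\frac{d-k+3}{2}+k-2=\frac{d+k-1}{2}$. Two details the paper leaves implicit are handled in your version: setting $X_{\mathrm{pm}}=X_{\mathrm{int}}$ to meet the formal two-threshold hypothesis, and checking that $d-k+2\geq2$.
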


It is important that Theorem~\ref{thm:double-pinned-volumes} uses the strongly pinned triangle theorem, not the doubly pinned triangle theorem. For example, when $k=3$, translating $x_0=0$ and setting $F=P_{y^\perp}E$ gives the exact identity
\begin{equation*}
\Vol_3^{0,y}(E)=|y|\,\Vol_2^0(F).
\end{equation*}
Although the tetrahedron is doubly pinned, the projected triangle has only the origin prescribed; its other two vertices vary freely over $F$. Theorem~\ref{thm:area} in the ambient space $y^\perp\simeq\R^{d-1}$ therefore requires $\dimH(F)>d/2$. Since $\dimH(F)\geq\dimH(E)-1$, this gives the tetrahedral threshold $(d+2)/2$. A genuinely doubly pinned triangle would arise only after prescribing a third tetrahedral vertex, in which case the nonempty-interior threshold would become $(d+3)/2$.

Since $\Vol_k^{x_0,y}(E)\subset\Vol_k^{x_0}(E)$, the second pin may be forgotten without changing the dimensional threshold.

\begin{corollary}[Strongly pinned lower-rank volumes]\label{cor:strongly-pinned-volumes}
Let $2\leq k<d$, let $E\subset\R^d$ be compact, and let $x_0\in\R^d$. If
\begin{equation*}
\dimH(E)>\frac{d+k-1}{2},
\end{equation*}
then $\Vol_k^{x_0}(E)$ has nonempty interior.
\end{corollary}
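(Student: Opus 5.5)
The proof is a direct deduction from Theorem~\ref{thm:double-pinned-volumes} in the range $3\leq k<d$, and from Theorem~\ref{thm:area} when $k=2$. The idea is to choose a second pin inside $E$ and then use the inclusion $\Vol_k^{x_0,y}(E)\subset\Vol_k^{x_0}(E)$, which is immediate because $\Vol_k^{x_0,y}(E)$ is the subset of $\Vol_k^{x_0}(E)$ obtained by forcing $x_1=y$. Nonempty interior passes to supersets, so it suffices to find one admissible $y$.

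\emph{Case $k=2$.} The hypothesis reads $\dimH(E)>(d+1)/2$. This is exactly the hypothesis of Theorem~\ref{thm:area}, which gives nonempty interior of $\Vol_2^{x_0}(E)$ for every $x_0\in\R^d$. No pinning of a second vertex is needed.

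\emph{Case $3\leq k<d$.} The hypothesis $\dimH(E)>(d+k-1)/2$ matches that of Theorem~\ref{thm:double-pinned-volumes}. It remains to check that $E\setminus\{x_0\}$ is nonempty. Since $(d+k-1)/2\geq 1>0$, the set $E$ has positive Hausdorff dimension, so it is uncountable, and in particular $E\setminus\{x_0\}\neq\emptyset$. We pick any $y$ in this set. Theorem~\ref{thm:double-pinned-volumes} then says that $\Vol_k^{x_0,y}(E)$ has nonempty interior. By the inclusion above, $\Vol_k^{x_0}(E)$ contains an open interval.

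There is no genuine obstacle here. The only points to check are the existence of an admissible second pin $y\neq x_0$ in $E$, and that the hypotheses of Theorems~\ref{thm:area} and~\ref{thm:double-pinned-volumes} are met in each case. The case $k=d$ would follow by the same argument, but it is excluded from the corollary because it is the regime already treated in the literature.
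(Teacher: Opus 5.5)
Your proof is correct and follows the paper's argument exactly. For $k=2$ you cite Theorem~\ref{thm:area}; for $3\leq k<d$ you take any $y\in E\setminus\{x_0\}$, apply Theorem~\ref{thm:double-pinned-volumes}, and use the inclusion $\Vol_k^{x_0,y}(E)\subset\Vol_k^{x_0}(E)$. Your check that $E\setminus\{x_0\}\neq\emptyset$ is a detail the paper leaves implicit.
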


At the full rank endpoint $k=d$, Theorem~\ref{thm:double-pinned-volumes} gives the genuinely doubly pinned conclusion at $\dimH(E)>d-1/2$. If the second pin is forgotten, the direct strongly pinned theorem of Greenleaf, Iosevich and Taylor is sharper: it gives nonempty interior for every prescribed $x_0$ under $\dimH(E)>d-1+1/d$ \cite{GIT22}. Applying one projection step and then their theorem in dimension $d-1$ would instead require $\dimH(E)>d-1+1/(d-1)$, so the direct full-rank theorem is preferable. Thus, at $k=d$, both results give nonempty interior: the Greenleaf--Iosevich--Taylor theorem has the sharper threshold for one prescribed pin, while Theorem~\ref{thm:double-pinned-volumes} retains a second prescribed vertex. In the proper lower-rank range, when $k<d$ the projection theorem yields the threshold $\frac{d+k-1}{2}$, which for two pins improves on the threshold $\frac{d+k-1}{2}+\frac{1}{4}+\frac{2k+1}{4(2d+1)}$ from \cite{BorgesFosterOuPalssonRomeroAcosta,BorgesOuPasquariello}, but we note that the latter can allow for many more pins (although they are not strongly pinned). When $d>3k-3$ this also improves in the two pin setting on the threshold $d-k+1$ stated in \cite{GaitanPalsson2026}, which is just obtained by slicing and repeated use of the result by Shmerkin and Yavicoli. When $2k-2< d \leq 3k-3$ the threshold $d-k+1$ still remains the best for a positive Lebesgue measure conclusion and allows for multiple pins.

\subsection{Organization}

Section~\ref{sec:radon-preliminaries} collects the generalized Radon-transform notation, the level-parameter regularization, the parameter-derivative estimate, and the Sobolev-valued function spaces used below. Section~\ref{sec:scalar-positive} develops the scalar pinned-measure principle, proves both the positive-measure and nonempty-interior conclusions, and derives Corollary~\ref{cor:intro-pinned-positive} and Theorem~\ref{thm:intro-pinned-interior}. Section~\ref{sec:scalar-examples} verifies the hypotheses of this principle for generalized norm distances, variable-coefficient and Riemannian distances, and dot products and bilinear forms. Section~\ref{sec:geometric-reduction} proves the geometric reduction statements. Section~\ref{sec:triangle-area} proves the cylinder estimate and the strongly and doubly pinned triangle theorems. Section~\ref{sec:simplex-consequences} derives the simplex-volume results. Section~\ref{sec:future} records future directions. 

\section{Preliminaries on generalized Radon transforms}\label{sec:radon-preliminaries}

Let $X\subset\R^{d_X}$ and $Y\subset\R^{d_Y}$ be open, let $\Phi:X\times Y\to\R$ be smooth, and let $\chi\in C_c^\infty(X\times Y)$ be nonnegative and supported on a compact regular patch for the level relation $\Phi(x,y)=t$, on which $D_x\Phi$ and $D_y\Phi$ do not vanish. Choose compact intervals $J,J^*\subset\R$ such that
\begin{equation*}
\Phi(\supp\chi)\subset\operatorname{int}(J)
\subset J
\subset\operatorname{int}(J^*).
\end{equation*}
The associated localized generalized Radon transform is
\begin{equation*}
T_t^\chi f(y)
=
\int f(x)\chi(x,y)\delta(\Phi(x,y)-t)\,dx.
\end{equation*}
The delta notation is understood in the Leray-measure sense on the regular level set.

\subsection{Level-parameter regularization}

Let $\rho\in C_c^\infty(\R)$ be nonnegative, even, supported in $(-1,1)$, and normalized by $\int\rho=1$. Set
\begin{equation*}
\rho_\eps(u)=\eps^{-1}\rho(u/\eps)
\end{equation*}
and define
\begin{equation*}
T_{t,\eps}^\chi f(y)
=
\int\rho_\eps(\tau-t)T_\tau^\chi f(y)\,d\tau.
\end{equation*}
Equivalently,
\begin{equation*}
T_{t,\eps}^\chi f(y)
=
\int f(x)\chi(x,y)\rho_\eps(\Phi(x,y)-t)\,dx.
\end{equation*}
For a finite measure $\mu$ on $X$, write
\begin{equation*}
h_\eps(y,t)
=
T_{t,\eps}^\chi\mu(y)
=
\int\chi(x,y)\rho_\eps(\Phi(x,y)-t)\,d\mu(x).
\end{equation*}
Thus $h_\eps(y,\cdot)$ is the regularized density of the localized pinned pushforward $\nu_y^\chi$, defined by
\begin{equation*}
\int\varphi(t)\,d\nu_y^\chi(t)
=
\int\chi(x,y)\varphi(\Phi(x,y))\,d\mu(x).
\end{equation*}
We regularize the level parameter rather than the Frostman measure. Since $\|\rho_\eps\|_1=1$, every operator estimate for $T_\tau^\chi$ that is uniform for $\tau\in J^*$ passes directly to $T_{t,\eps}^\chi$ for $t\in J$ and sufficiently small $\eps$. Once uniform parameter-derivative estimates are available, integration by parts in $\tau$ shows that they also pass to the regularized family. Consequently, the corresponding parameter-difference estimates are uniform in $\eps$.

\subsection{Finite-energy and Frostman trace estimates}

For a finite Borel measure $\sigma$ on $\R^n$ and $0<s<n$, write
\begin{equation*}
I_s(\sigma)
=
\iint |x-y|^{-s}\,d\sigma(x)d\sigma(y).
\end{equation*}

\begin{lemma}[Finite-energy and Frostman estimates]\label{lem:energy-trace}
Let $0<s<n$.

\begin{enumerate}
\item If $I_s(\sigma)<\infty$ and $P_i$ is a Littlewood--Paley projection to frequencies of size $2^i$, then
\begin{equation*}
\|P_i\sigma\|_{L^2(\R^n)}
\lesssim
2^{i(n-s)/2}I_s(\sigma)^{1/2}.
\end{equation*}

\item Suppose that $\sigma$ is $s$-Frostman, so that
\begin{equation*}
\sigma(B(z,r))\leq C_\sigma r^s
\end{equation*}
for every $z\in\R^n$ and $r>0$. If $\widehat g$ is supported where $|\xi|\lesssim2^i$, then
\begin{equation*}
\|g\|_{L^2(d\sigma)}
\lesssim
C_\sigma^{1/2}2^{i(n-s)/2}\|g\|_{L^2(\R^n)}.
\end{equation*}

\item If $I_s(\sigma)<\infty$, then there are increasing Borel sets $E_N$ whose union has full $\sigma$-measure and such that each restriction $\sigma|_{E_N}$ is $s$-Frostman.
\end{enumerate}
\end{lemma}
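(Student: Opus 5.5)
Part (1) follows from Plancherel in the Fourier form of the energy. Write $\widehat{P_i\sigma}=\psi(2^{-i}\xi)\widehat\sigma(\xi)$ with $\psi$ supported in an annulus $|\xi|\sim 1$ (or a ball for $i=0$). Then
\begin{equation*}
\|P_i\sigma\|_{L^2}^2=\int|\psi(2^{-i}\xi)|^2|\widehat\sigma(\xi)|^2\,d\xi\lesssim 2^{i(n-s)}\int|\widehat\sigma(\xi)|^2|\xi|^{s-n}\,d\xi .
\end{equation*}
Here we use $|\xi|^{s-n}\gtrsim 2^{-i(n-s)}$ on the support; for $i=0$ we only need $|\widehat\sigma|\le\sigma(\R^n)$, together with the trivial bound $\sigma(\R^n)^2\le C\,I_s(\sigma)$ on compact support. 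The classical identity
\begin{equation*}
I_s(\sigma)=c_{n,s}\int|\widehat\sigma(\xi)|^2|\xi|^{s-n}\,d\xi,
\end{equation*}
valid for $0<s<n$, then gives the claim.

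For (2) I would argue by duality. Since $\widehat g$ is supported in $|\xi|\lesssim 2^i$, we have $g=g*\phi_i$, where $\phi_i(x)=2^{in}\phi(2^ix)$ is a Schwartz function with $\widehat\phi=1$ on the relevant ball. Cauchy--Schwarz with the weight $|\phi_i|$ gives $|g(x)|^2\le\|\phi\|_1\,(|g|^2*|\phi_i|)(x)$. Integrating against $\sigma$ yields
\begin{equation*}
\int|g|^2\,d\sigma\lesssim\int|g(y)|^2\,(|\phi_i|*\sigma)(y)\,dy .
\end{equation*}
The main estimate is then a sup bound for $|\phi_i|*\sigma$. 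Using $|\phi_i(x)|\lesssim_N 2^{in}(1+2^i|x|)^{-N}$, decompose into dyadic shells $|x-y|\sim 2^{k-i}$. The Frostman condition bounds the mass in each shell by $C_\sigma 2^{(k-i)s}$, so
\begin{equation*}
\sup_y(|\phi_i|*\sigma)(y)\lesssim C_\sigma\sum_k 2^{in}2^{-kN}2^{(k-i)s}\lesssim C_\sigma 2^{i(n-s)}
\end{equation*}
for $N>s$. This gives the stated bound with $C_\sigma^{1/2}2^{i(n-s)/2}$.

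For (3) the plan is a standard potential-truncation argument. Let $U(x)=\int|x-y|^{-s}\,d\sigma(y)$. Then $\int U\,d\sigma=I_s(\sigma)<\infty$, so $U<\infty$ $\sigma$-a.e. Set $E_N=\{U\le N\}$; these sets are Borel, since $U$ is lower semicontinuous, and they increase to a set of full $\sigma$-measure. For the Frostman bound on $\sigma_N=\sigma|_{E_N}$, take a ball $B(z,r)$. If it misses $E_N$ there is nothing to prove. Otherwise pick $x\in E_N\cap B(z,r)$; then $B(z,r)\subset B(x,2r)$ and
\begin{equation*}
\sigma_N(B(z,r))\le\sigma(B(x,2r))\le(2r)^s\int_{B(x,2r)}|x-y|^{-s}\,d\sigma(y)\le 2^sN r^s .
\end{equation*}
For large $r$ we may instead use total mass, and the same bound holds with a constant depending on $N$.

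I expect the main obstacle to be the sup bound on $|\phi_i|*\sigma$ in (2), in particular checking that the tail sum over shells converges uniformly in $i$. This is routine once the dyadic decomposition is written out, with $N>s$ ensuring summability. The rest reduces to the Fourier representation of the energy.
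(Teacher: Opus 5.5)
Your proposal is correct and follows essentially the same route as the paper in all three parts. For (1) both use the Fourier form of the Riesz energy; for (2) both use the reproducing identity $g=g*\phi_i$ with weighted Cauchy--Schwarz, Fubini and a Frostman sup bound on $|\phi_i|*\sigma$; for (3) both use the truncated-potential sets $E_N=\{U\le N\}$ together with the comparison $B(z,r)\subset B(x,2r)$. Your separate treatment of $i=0$ (which invokes compact support, not a hypothesis of the lemma) and of large $r$ in (3) is unnecessary, since $|\xi|^{s-n}\gtrsim 1$ on the low-frequency ball and the bound $2^sNr^s$ already holds for every $r>0$.
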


\begin{proof}
The first estimate follows from the Fourier representation of Riesz energy:
\begin{equation*}
I_s(\sigma)
\approx
\int_{\R^n}
|\widehat\sigma(\xi)|^2|\xi|^{s-n}\,d\xi.
\end{equation*}

For the second estimate, choose a Schwartz function $\psi_i$ at scale $2^{-i}$ such that $g=g*\psi_i$. Cauchy--Schwarz gives
\begin{equation*}
|g(x)|^2
\lesssim
\int |g(z)|^2|\psi_i(x-z)|\,dz.
\end{equation*}
After integration against $\sigma$ and reversal of the order of integration,
\begin{equation*}
\int |g(x)|^2\,d\sigma(x)
\lesssim
\int |g(z)|^2(|\psi_i|*\sigma)(z)\,dz.
\end{equation*}
The Frostman condition and the rapid decay of $\psi$ imply
\begin{equation*}
\sup_z(|\psi_i|*\sigma)(z)
\lesssim
C_\sigma 2^{i(n-s)},
\end{equation*}
which proves the claim.

For the final assertion, let
\begin{equation*}
U_s^\sigma(x)
=
\int |x-y|^{-s}\,d\sigma(y)
\end{equation*}
and set
\begin{equation*}
E_N=\{x:U_s^\sigma(x)\leq N\}.
\end{equation*}
Since $I_s(\sigma)<\infty$, the union of the sets $E_N$ has full $\sigma$-measure. If $E_N\cap B(z,r)\neq\varnothing$, choose $x\in E_N\cap B(z,r)$. Then
\begin{equation*}
\sigma|_{E_N}(B(z,r))
\leq
\sigma(B(z,r))
\leq
(2r)^sU_s^\sigma(x)
\leq
2^sNr^s.
\end{equation*}
Thus $\sigma|_{E_N}$ is $s$-Frostman.
\end{proof}

\subsection{Parameter derivatives}

For smooth one-parameter families of scalar generalized Radon transforms, differentiation in the level parameter preserves the canonical relation and raises the order by at most one. 

Throughout this section, a Fourier-integral estimate that is uniform for $t$ in a compact interval is understood to have constants uniform when the localized amplitude ranges over a bounded subset of the relevant symbol class.

\begin{lemma}[Parameter derivatives for scalar Radon families]\label{lem:scalar-parameter-derivative}
Let $T_t^\chi$ be as above. Suppose that, for some $\gamma\geq0$,
\begin{equation}\label{eq:radon-fixed-estimate}
\|T_t^\chi f_i\|_{L^2(Y)}
\lesssim
2^{-\gamma i}\|f_i\|_{L^2(X)},
\end{equation}
where $f_i=P_i^Xf$. Suppose that \eqref{eq:radon-fixed-estimate} holds uniformly for $t\in J^*$ in the preceding sense. Then, for every integer $N\geq0$,
\begin{equation}\label{eq:higher-parameter-derivatives}
\|\partial_t^NT_t^\chi f_i\|_{L^2(Y)}
\lesssim_N
2^{Ni}2^{-\gamma i}\|f_i\|_{L^2(X)}
\end{equation}
uniformly for $t\in J^*$.

The same estimates hold uniformly for the regularized operators:
\begin{equation}\label{eq:regularized-parameter-derivatives}
\|\partial_t^NT_{t,\eps}^\chi f_i\|_{L^2(Y)}
\lesssim_N
2^{Ni}2^{-\gamma i}\|f_i\|_{L^2(X)}
\end{equation}
for $t\in J$ and sufficiently small $\eps$. In particular,
\begin{equation}\label{eq:regularized-parameter-difference}
\|(T_{t,\eps}^\chi-T_{t',\eps}^\chi)f_i\|_{L^2(Y)}
\lesssim
\min\{1,2^i|t-t'|\}\,2^{-\gamma i}\|f_i\|_{L^2(X)}
\end{equation}
uniformly for $t,t'\in J$ and sufficiently small $\eps$.
\end{lemma}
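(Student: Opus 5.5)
We argue directly from the oscillatory-integral representation of $\delta$ and treat the level parameter $t$ as part of the phase. On a regular patch we write
\begin{equation*}
T_t^\chi f(y)=\frac{1}{2\pi}\int\!\!\int e^{i\lambda(\Phi(x,y)-t)}\chi(x,y)f(x)\,d\lambda\,dx .
\end{equation*}
This exhibits $T_t^\chi$ as a Fourier integral operator with phase $\lambda(\Phi(x,y)-t)$ and amplitude $\chi$, which is a symbol of order $0$ in $\lambda$. Differentiating $N$ times in $t$ only multiplies the amplitude by $(-i\lambda)^N$. The phase, and hence the canonical relation
\begin{equation*}
C_t=\{(x,\lambda D_x\Phi;\,y,-\lambda D_y\Phi):\Phi(x,y)=t\}
\end{equation*}
depend smoothly on $t$ and are unchanged by the differentiation. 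So $\partial_t^N T_t^\chi$ is an FIO with the same canonical relation and order raised by $N$. Its amplitudes $(-i\lambda)^N\chi$ range over a bounded subset of the symbol class $S^N$, uniformly for $t\in J^*$.

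On the frequency-localized piece $f_i=P_i^Xf$, only $|\lambda|\approx 2^i$ contributes, up to rapidly decaying tails. The reason is that on the patch $D_x\Phi\neq0$, so the frequency $\lambda D_x\Phi$ has size comparable to $|\lambda|$. We therefore insert a dyadic cutoff $\beta(2^{-i}\lambda)$, plus harmless neighbouring pieces, and write $\lambda^N\beta(2^{-i}\lambda)=2^{Ni}\tilde\beta(2^{-i}\lambda)$ with $\tilde\beta(u)=u^N\beta(u)$. The operator $\partial_t^N T_t^\chi P_i^X$ is then $2^{Ni}$ times an operator of the same type as $T_t^\chi P_i^X$ (same phase, amplitude in a bounded set of order-$0$ symbols), plus an error that is smoothing of every order. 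The hypothesis \eqref{eq:radon-fixed-estimate} is assumed uniform over such bounded amplitude sets, so applying it gives \eqref{eq:higher-parameter-derivatives}. The tail terms with $|\lambda|\not\approx2^i$ are handled by nonstationary phase in $x$, which yields $O_M(2^{-Mi})\|f_i\|_2$.

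\textbf{Main obstacle.} The delicate step is justifying this frequency localization: after the $\lambda$-cutoff the operator must still lie in the class where \eqref{eq:radon-fixed-estimate} applies uniformly. I would first try integrating by parts in $x$ against $e^{i\lambda\Phi}\hat f_i$. This uses the bound $|D_x\Phi|\gtrsim1$ on $\supp\chi$ to show that the mismatch between $|\lambda|$ and $2^i$ produces arbitrary decay.

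For the regularized operators, write $T_{t,\eps}^\chi=\int\rho_\eps(\tau-t)T_\tau^\chi\,d\tau$. Since
\begin{equation*}
\partial_t\rho_\eps(\tau-t)=-\partial_\tau\rho_\eps(\tau-t),
\end{equation*}
integrating by parts $N$ times in $\tau$ gives
\begin{equation*}
\partial_t^NT_{t,\eps}^\chi=\int\rho_\eps(\tau-t)\,\partial_\tau^NT_\tau^\chi\,d\tau .
\end{equation*}
For $t\in J$ and $\eps<\operatorname{dist}(J,\partial J^*)$ we have $\tau\in J^*$, so Minkowski's inequality and $\|\rho_\eps\|_1=1$ yield \eqref{eq:regularized-parameter-derivatives}. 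For \eqref{eq:regularized-parameter-difference}, the case $N=0$ bounds each of the two terms, giving the factor $1$. The mean value theorem with $N=1$ gives $|t-t'|\,2^i2^{-\gamma i}\|f_i\|_2$. Taking the minimum of the two bounds proves the claim.
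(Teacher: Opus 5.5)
Your proposal is correct and takes essentially the same route as the paper. You write the Leray delta as an oscillatory integral and note that each $t$-derivative multiplies the amplitude by $\lambda$ with $|\lambda|\approx 2^i$, so $2^{Ni}$ factors out and the remaining amplitudes lie in a bounded symbol set covered by the uniform hypothesis; integration by parts in $\tau$, Minkowski's inequality and the fundamental theorem of calculus then give the regularized and difference estimates. Your exact amplitude $\chi$, explicit dyadic $\lambda$-cutoff and nonstationary-phase handling of the tails only spell out the frequency localization the paper asserts in one line.
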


\begin{proof}
On the chosen patch, the Schwartz kernel has the oscillatory representation
\begin{equation*}
K_t(y,x)
=
\int_{\R}
e^{2\pi i\lambda(\Phi(x,y)-t)}
a(x,y,t,\lambda)\,d\lambda,
\end{equation*}
where $a$ is a classical symbol. Differentiating $N$ times in $t$ produces a finite sum of terms whose amplitudes have the form
\begin{equation*}
\lambda^\ell\partial_t^{N-\ell}a(x,y,t,\lambda),
\qquad
0\leq\ell\leq N.
\end{equation*}
The canonical relation is unchanged. On the microlocal support of the operator acting on an input at frequency $2^i$, regularity of the phase gives
\begin{equation*}
|\lambda|\approx|\xi|\approx2^i.
\end{equation*}
Thus each $t$-derivative raises the order of the amplitude by at most one and costs at most a factor $2^i$. After factoring out $2^{Ni}$, the resulting amplitudes remain in a bounded subset of the symbol class to which the fixed-parameter estimate applies. This proves \eqref{eq:higher-parameter-derivatives}.

For the regularized family,
\begin{equation*}
T_{t,\eps}^\chi
=
\int\rho_\eps(\tau-t)T_\tau^\chi\,d\tau.
\end{equation*}
The estimate with $N=0$ follows from Minkowski's inequality and $\|\rho_\eps\|_1=1$. For $N\geq1$, integration by parts in $\tau$ gives
\begin{equation*}
\partial_t^NT_{t,\eps}^\chi
=
\int\rho_\eps(\tau-t)\partial_\tau^NT_\tau^\chi\,d\tau.
\end{equation*}
Estimate \eqref{eq:regularized-parameter-derivatives} therefore follows from \eqref{eq:higher-parameter-derivatives}. Finally, the fixed estimate gives
\begin{equation*}
\|(T_{t,\eps}^\chi-T_{t',\eps}^\chi)f_i\|_{L^2(Y)}
\lesssim
2^{-\gamma i}\|f_i\|_{L^2(X)},
\end{equation*}
while the fundamental theorem of calculus and the estimate with $N=1$ give
\begin{equation*}
\|(T_{t,\eps}^\chi-T_{t',\eps}^\chi)f_i\|_{L^2(Y)}
\lesssim
2^i|t-t'|\,2^{-\gamma i}\|f_i\|_{L^2(X)}.
\end{equation*}
Taking the better of these two estimates proves \eqref{eq:regularized-parameter-difference}.
\end{proof}

\subsection{Input--output frequency compatibility}

The fixed-parameter norm estimate alone does not imply that input and output frequencies are comparable. For the localized generalized Radon transforms considered here, this compatibility follows from the canonical relation.

\begin{lemma}[Input--output frequency compatibility]\label{lem:scalar-frequency-compatibility}
Let $P_i^X$ and $P_j^Y$ denote Littlewood--Paley projections in the input and output variables. There exists an integer $C_0\geq1$ such that, for every $M\geq0$,
\begin{equation}\label{eq:scalar-frequency-compatibility}
\|P_j^YT_t^\chi P_i^X\|_{L^2(X)\to L^2(Y)}
\lesssim_M
2^{-M\max\{i,j\}}
\end{equation}
whenever $|i-j|>C_0$, uniformly for $t\in J^*$. The same conclusion holds for parameter derivatives and for the regularized operators $T_{t,\eps}^\chi$.
\end{lemma}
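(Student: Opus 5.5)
We will prove \eqref{eq:scalar-frequency-compatibility} by a non-stationary phase argument on the oscillatory representation of the Schwartz kernel used in the proof of Lemma~\ref{lem:scalar-parameter-derivative}:
\begin{equation*}
K_t(y,x)=\int_{\R}e^{2\pi i\lambda(\Phi(x,y)-t)}a(x,y,t,\lambda)\,d\lambda .
\end{equation*}
The kernel of $P_j^YT_t^\chi P_i^X$ is obtained by pairing $K_t$ against the kernels of the two Littlewood--Paley projections. On the Fourier side, this is
\begin{equation*}
\iint\!\!\int e^{2\pi i(\lambda\Phi(x,y)-\lambda t+x\cdot\xi-y\cdot\eta)}a(x,y,t,\lambda)\,d\lambda\,dx\,dy
\end{equation*}
integrated against $\widehat{f}(\xi)\psi_i(\xi)\psi_j(\eta)$. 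The phase is $\lambda\Phi(x,y)+x\cdot\xi-y\cdot\eta$.

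The key geometric input is the regular-patch hypothesis: $D_x\Phi$ and $D_y\Phi$ are bounded above and below on $\supp\chi$. Hence there is a constant $C$ such that
\begin{equation*}
|\nabla_x(\lambda\Phi)|\approx|\lambda|\approx|\nabla_y(\lambda\Phi)|.
\end{equation*}
We choose $C_0$ so that $2^{C_0}$ exceeds a large multiple of the ratio of these upper and lower bounds.

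Suppose $|i-j|>C_0$, and consider first the case $i>j+C_0$. We split the $\lambda$-integral by a smooth cutoff into two regions.

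In the region $|\lambda|\ll 2^i$, the gradient in $x$ satisfies
\begin{equation*}
|\nabla_x(\lambda\Phi+x\cdot\xi)|\geq|\xi|-C|\lambda|\gtrsim 2^i .
\end{equation*}
In the region $|\lambda|\gtrsim 2^i$, we have $|\lambda|\gg 2^j\approx|\eta|$, so
\begin{equation*}
|\nabla_y(\lambda\Phi-y\cdot\eta)|\gtrsim|\lambda| .
\end{equation*}
In each region we integrate by parts repeatedly in the variable with a nonvanishing gradient, using the transposed first-order operator $L=\frac{\nabla\phi\cdot\nabla}{2\pi i|\nabla\phi|^2}$. This is legitimate because $\chi$ and the symbol are smooth and compactly supported in $(x,y)$. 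Each integration by parts gains a factor $\max(2^i,|\lambda|)^{-1}$, while derivatives of the symbol cost at most $|\lambda|^0$ per derivative. After $M'$ steps, the amplitude of order $m$ in $\lambda$ becomes integrable in $\lambda$. The resulting kernel then has size $O(2^{-M\max\{i,j\}})$ together with rapid decay in the off-diagonal frequency variables. The operator norm bound follows by Schur's test, or simply by bounding the $L^2$ norm of the kernel restricted to the compact support of $\chi$. The case $j>i+C_0$ is symmetric, with the roles of $x$ and $y$ exchanged.

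Uniformity in $t\in J^*$ holds because the constants depend only on finitely many seminorms of $a$ and $\chi$ and on the bounds for $D_x\Phi$ and $D_y\Phi$. These are uniform by the standing convention. For the parameter derivatives $\partial_t^N T_t^\chi$, the amplitude becomes $\lambda^\ell\partial_t^{N-\ell}a$. This is again a symbol, of order at most $N$ higher, with the same phase, so the same argument applies; the extra power of $|\lambda|$ is absorbed by taking more integrations by parts. For $T_{t,\eps}^\chi$, we write it as $\int\rho_\eps(\tau-t)T_\tau^\chi\,d\tau$ and apply Minkowski's inequality together with $\|\rho_\eps\|_1=1$. For its $t$-derivatives, we first move the derivatives onto $T_\tau^\chi$ by integration by parts in $\tau$, as in Lemma~\ref{lem:scalar-parameter-derivative}.

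The main obstacle is the bookkeeping in the intermediate region $|\lambda|\approx 2^i$ when $i>j$. There neither variable is trivially non-stationary unless one exploits the lower bound on $|D_y\Phi|$. This is exactly where the regular-patch (nondegeneracy) hypothesis, and the choice of $C_0$ relative to the gradient bounds, enters. The contribution from $\lambda$ near $0$ is also delicate, since the symbol is only smooth there. We handle it by noting that this piece is a smoothing operator with smooth compactly supported kernel, so its contribution is $O(2^{-M\max\{i,j\}})$ by integration by parts in $x$ or $y$ against the projections.
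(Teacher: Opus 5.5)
Your proposal is correct and follows the same non-stationary-phase argument as the paper. On the compact patch, $D_x\Phi$ and $D_y\Phi$ are nonvanishing, so the input and output covectors $-\lambda D_x\Phi$ and $\lambda D_y\Phi$ have comparable size; hence for $|i-j|>C_0$ repeated integration by parts gives arbitrary decay, and parameter derivatives and the regularized operators are handled by modifying the amplitude and averaging in $\tau$. Your explicit split of the $\lambda$-integral at $|\lambda|\approx c\,2^{\max\{i,j\}}$ (integrating by parts in the variable carrying the larger frequency below the split, and in the other variable above it) simply spells out what the paper compresses into ``the frequency supports do not meet the canonical relation.''
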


\begin{proof}
The canonical relation of $T_t^\chi$ is contained in
\begin{equation*}
\mathcal C_t
=
\big\{
(y,\tau D_y\Phi(x,y);x,-\tau D_x\Phi(x,y)):
\Phi(x,y)=t,\ \tau\neq0
\big\}.
\end{equation*}
Since the patch is compact and $D_x\Phi$ and $D_y\Phi$ do not vanish there, there are constants $c,C>0$ such that
\begin{equation*}
c|\tau|
\leq
|\tau D_x\Phi(x,y)|,\,
|\tau D_y\Phi(x,y)|
\leq
C|\tau|
\end{equation*}
throughout the patch. Thus the input and output covectors have comparable magnitudes on $\mathcal C_t$.

If $|i-j|$ is sufficiently large, the frequency supports of $P_i^X$ and $P_j^Y$ do not meet the canonical relation. The corresponding oscillatory integral therefore has no stationary point, and repeated integration by parts gives \eqref{eq:scalar-frequency-compatibility}. Differentiation in $t$ only modifies the amplitude and does not change the canonical relation. Averaging in the level parameter gives the same conclusion for $T_{t,\eps}^\chi$.
\end{proof}

\subsection{Sobolev-valued function spaces}

For $0<\beta<1$, $H^\beta(J)$ denotes the fractional Sobolev space in the parameter $t$, with norm equivalent to
\begin{equation*}
\|g\|_{L^2(J)}^2
+
\iint_{J\times J}
\frac{|g(t)-g(t')|^2}{|t-t'|^{1+2\beta}}\,dt\,dt'.
\end{equation*}
If $B$ is a Banach space, $L^2(d\lambda;B)$ denotes the Bochner space of strongly measurable $B$-valued functions $F$ satisfying
\begin{equation*}
\int\|F(y)\|_B^2\,d\lambda(y)<\infty;
\end{equation*}
see \cite[Chapter~1]{HytonenVanNeervenVeraarWeis}. In particular, $L^2(d\lambda(y);H^\beta(J))$ consists of strongly measurable families $h(y,\cdot)\in H^\beta(J)$ satisfying
\begin{equation*}
\int\|h(y,\cdot)\|_{H^\beta(J)}^2\,d\lambda(y)<\infty.
\end{equation*}
When $\beta>\frac{1}{2}$, the one-dimensional scalar Sobolev embedding gives
\begin{equation*}
H^\beta(J)\hookrightarrow C^0(J);
\end{equation*}
see \cite[Section~8, Theorem 8.2]{DiNezzaPalatucciValdinoci}. We apply this scalar embedding fiberwise: membership in $L^2(d\lambda;H^\beta(J))$ implies that $h(y,\cdot)\in H^\beta(J)$ for $\lambda$-almost every $y$, and hence $h(y,\cdot)$ has a continuous representative for almost every $y$.

\section{Pinned scalar configuration measures and nonempty interior}\label{sec:scalar-positive}

With the notation and regularization of Section~\ref{sec:radon-preliminaries}, the following asymmetric principle contains both the fixed-parameter positive-measure argument and the nonempty-interior upgrade.

The proposition is stated for nonzero localized pushforwards. The full pinned pushforward is always nonzero, indeed it has total mass one because $\mu$ is a probability measure, and after the proof we show that, for almost every pin, at least one localized pushforward is nonzero.

\begin{proposition}[Scalar double-pinning principle]\label{prop:scalar-double-pinning}
Suppose that, for some $\gamma\geq0$, every Littlewood--Paley piece $f_i=P_i^Xf$ satisfies
\begin{equation}\label{eq:scalar-fixed-estimate}
\|T_t^\chi f_i\|_{L^2(Y)}
\lesssim
2^{-\gamma i}\|f_i\|_{L^2(X)}
\end{equation}
uniformly for $t\in J^*$. Assume that this Fourier-integral estimate has the symbol-class uniformity required in Lemma~\ref{lem:scalar-parameter-derivative}.

Let $\mu$ and $\lambda$ be compactly supported probability measures on $X$ and $Y$ with finite $s_X$-energy and finite $s_Y$-energy, respectively. Set
\begin{equation*}
\eta
=
\gamma+\frac{s_X-d_X}{2}+\frac{s_Y-d_Y}{2}.
\end{equation*}
Then the following hold.

If $\eta>0$, then, for $\lambda$-almost every $y$, every nonzero localized pinned pushforward $\nu_y^\chi$ has an $L^2$ density and therefore has support of positive Lebesgue measure.

If $\eta>\frac{1}{2}$, then, for $\lambda$-almost every $y$, every nonzero localized pinned pushforward $\nu_y^\chi$ has a continuous density and therefore has support containing a nonempty interval.
\end{proposition}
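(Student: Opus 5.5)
We will prove both parts by bounding the regularized pinned densities $h_\eps(y,t)=T^\chi_{t,\eps}\mu(y)$ uniformly in $\eps$, in $L^2(d\lambda(y);L^2(J))$ for the first part and in $L^2(d\lambda(y);H^\beta(J))$ with some $\beta>\frac12$ for the second. Then we pass to the limit $\eps\to0$.

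\textbf{Frequency decomposition.} First reduce to the case where $\lambda$ is $s_Y$-Frostman. By Lemma~\ref{lem:energy-trace}(3) we may replace $\lambda$ by $\lambda|_{E_N}$, since the union of the $E_N$ has full $\lambda$-measure. Decompose
\begin{equation*}
\mu=\sum_i P_i^X\mu,\qquad h_\eps(y,t)=\sum_i T^\chi_{t,\eps}P^X_i\mu(y).
\end{equation*}
Because $\mu$ is compactly supported and $\chi$ localizes, we may insert a cutoff and treat each piece as a function on $X$.

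By Lemma~\ref{lem:scalar-frequency-compatibility}, the output of $T^\chi_{t,\eps}P_i^X$ is concentrated at frequencies $2^j$ with $|i-j|\le C_0$, up to errors of size $O(2^{-Mi})$. For the main part, Lemma~\ref{lem:energy-trace}(2) applied to $\lambda$ and then \eqref{eq:scalar-fixed-estimate} together with Lemma~\ref{lem:energy-trace}(1) applied to $\mu$ give
\begin{equation*}
\|T^\chi_{t,\eps}P_i^X\mu\|_{L^2(d\lambda)}\lesssim 2^{i(d_Y-s_Y)/2}\,2^{-\gamma i}\,2^{i(d_X-s_X)/2}I_{s_X}(\mu)^{1/2}=2^{-\eta i}.
\end{equation*}
For the tails, the $O(2^{-Mi})$ error pieces are controlled using the crude bound $\|g\|_{L^2(d\lambda)}\lesssim\|g\|_{C^0}$. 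This bound applies because the kernels are smooth after the level-parameter regularization, so each term is polynomially bounded in $2^i$ and $\eps^{-1}$. Choosing $M$ large absorbs these terms, and we will check that the resulting bounds are uniform in $\eps$. Integrating in $t\in J$ and summing over $i$ when $\eta>0$ gives $\sup_\eps\|h_\eps\|_{L^2(d\lambda\times dt)}<\infty$.

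\textbf{Positive measure.} Fatou's lemma together with weak compactness then gives, for $\lambda$-a.e. $y$, a weak $L^2(J)$ limit $h(y,\cdot)$ along a subsequence; a diagonal argument is needed to choose the subsequence so that this holds for a.e. $y$. Since $h_\eps(y,\cdot)=\rho_\eps*\nu_y^\chi$ converges weakly as measures to $\nu_y^\chi$, the limit $h(y,\cdot)$ is a density for $\nu^\chi_y$. A nonzero measure with an $L^2$ density has support of positive Lebesgue measure.

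\textbf{Continuous density.} For the second part, apply \eqref{eq:regularized-parameter-difference} to each piece:
\begin{equation*}
\|(T^\chi_{t,\eps}-T^\chi_{t',\eps})P_i^X\mu\|_{L^2(d\lambda)}\lesssim\min\{1,2^i|t-t'|\}\,2^{-\eta i}.
\end{equation*}
Insert this into the Gagliardo seminorm and use Minkowski's inequality in $i$. A direct computation gives
\begin{equation*}
\iint_{J\times J}\frac{\min\{1,2^i|t-t'|\}^2}{|t-t'|^{1+2\beta}}\,dt\,dt'\lesssim 2^{2\beta i}
\end{equation*}
for $0<\beta<1$. Hence $\|h_\eps\|_{L^2(d\lambda;H^\beta(J))}\lesssim\sum_i 2^{(\beta-\eta)i}$, which is finite if $\beta<\eta$. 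When $\eta>\frac12$ we can choose $\beta\in(\frac12,\min\{\eta,1\})$.

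This uniform bound passes to a weak limit in the Hilbert space $L^2(d\lambda;H^\beta(J))$. That limit agrees with the $L^2$ density found above, so by the fiberwise embedding $H^\beta(J)\hookrightarrow C^0(J)$ the density of $\nu_y^\chi$ is continuous for $\lambda$-a.e. $y$. A continuous, nonnegative, not identically zero density is positive on an open interval, so $\operatorname{supp}\nu^\chi_y$ contains a nonempty interval. This holds for $t$ in $J$, and all of $\nu_y^\chi$ lives in $\operatorname{int}J$ because $\Phi(\supp\chi)\subset\operatorname{int}(J)$.

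\textbf{Main obstacle.} The delicate step is the rigorous interchange between the $L^2(d\lambda)$ trace estimate and the operator bound. Lemma~\ref{lem:energy-trace}(2) needs a band-limited function, while $T^\chi_{t,\eps}P_i^X\mu$ is only essentially band-limited. This requires handling the $P_j^Y$ tails via Lemma~\ref{lem:scalar-frequency-compatibility} in a way that is uniform in $\eps$, and making sense of the trace on $\lambda$ of $L^2$ functions. I would first prove everything for $\mu$ replaced by $\mu*\phi_\delta$, where all objects are smooth, with bounds uniform in $\delta$ and $\eps$, and then let $\delta\to0$.
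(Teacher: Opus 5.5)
Your proposal is correct and follows essentially the paper's route. The common steps are the Frostman reduction for $\lambda$, the per-frequency bound $2^{-\eta i}$ from the trace, fixed-parameter and energy estimates, and uniform bounds for $h_\eps$ first in $L^2(d\lambda\,dt)$ and then in $L^2(d\lambda;H^\beta(J))$ via the parameter-difference estimate. The only difference there is that you integrate the Gagliardo kernel frequency by frequency, whereas the paper first sums to an $L^2(d\lambda)$-H\"older bound $|t-t'|^\alpha$; both give the range $\frac12<\beta<\min\{1,\eta\}$, and both identify the weak limits with $\nu_y^\chi$ by testing.

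Two remarks. First, no diagonal argument is needed in the positive-measure part, since any fiberwise subsequential limit is forced to be the density of $\nu_y^\chi$. Second, the tails should not be handled by an $\eps^{-1}$-dependent $C^0$ bound, which would not be uniform in $\eps$, nor by mollifying $\mu$. Each $T_{t,\eps}^\chi\mu_i$ is already smooth, and applying Lemma~\ref{lem:energy-trace}(2) to the band-limited pieces $P_j^YT_{t,\eps}^\chi\mu_i$, together with the $\eps$-uniform bounds of Lemma~\ref{lem:scalar-frequency-compatibility}, disposes of the terms with $|i-j|>C_0$.
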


\begin{proof}

By Lemma~\ref{lem:energy-trace}, there are increasing Borel sets $Y_N\subset Y$ whose union has full $\lambda$-measure and such that each normalized restriction
\begin{equation*}
\lambda_N
=
\frac{\lambda|_{Y_N}}{\lambda(Y_N)}
\end{equation*}
is $s_Y$-Frostman whenever $\lambda(Y_N)>0$. Since the conclusion is an almost-everywhere statement in $y$, it is enough to prove it with $\lambda$ replaced by each $\lambda_N$. We may therefore assume throughout the proof that $\lambda$ is $s_Y$-Frostman.

By Lemma~\ref{lem:scalar-parameter-derivative},
\begin{equation}\label{eq:scalar-derivative-estimate}
\|\partial_tT_t^\chi f_i\|_{L^2(Y)}
\lesssim
2^i2^{-\gamma i}\|f_i\|_{L^2(X)}
\end{equation}
uniformly for $t\in J^*$. The same fixed-parameter and derivative estimates hold uniformly for the regularized operators, and \eqref{eq:regularized-parameter-difference} gives the corresponding parameter-difference estimate. Lemma~\ref{lem:scalar-frequency-compatibility} shows that an input at frequency $2^i$ produces output frequencies comparable to $2^i$, up to rapidly decaying errors.

Write $\mu_i=P_i^X\mu$. The output frequency compatibility and the Frostman trace estimate in the $Y$ variable, followed by \eqref{eq:scalar-fixed-estimate} and the finite-energy estimate for the input measure, give
\begin{align*}
\|T_{t,\eps}^\chi\mu_i\|_{L^2(d\lambda(y))}
&\lesssim
2^{i(d_Y-s_Y)/2}
\|T_{t,\eps}^\chi\mu_i\|_{L^2(Y)}
\\
&\lesssim
2^{i(d_Y-s_Y)/2}
2^{-\gamma i}
\|\mu_i\|_{L^2(X)}
\\
&\lesssim
2^{i(d_Y-s_Y)/2}
2^{-\gamma i}
2^{i(d_X-s_X)/2}
=
2^{-\eta i}.
\end{align*}
If $\eta>0$, summation over $i$ gives
\begin{equation*}
\sup_{0<\eps<\eps_0}
\|h_\eps\|_{L^2(d\lambda(y)\,dt)}
<\infty.
\end{equation*}
Passing to a subsequence, $h_\eps$ converges weakly in $L^2(d\lambda(y)\,dt)$ to a function $h(y,t)$.

For $\varphi\in C_c(J)$,
\begin{equation*}
\int_J\varphi(t)h_\eps(y,t)\,dt
=
\int\chi(x,y)(\varphi*\rho_\eps)(\Phi(x,y))\,d\mu(x).
\end{equation*}
Multiplying by a bounded measurable function $\psi(y)$, integrating with respect to $\lambda(y)$, and passing to the limit gives
\begin{equation*}
\int\psi(y)\int_J\varphi(t)h(y,t)\,dt\,d\lambda(y)
=
\int\psi(y)\int\chi(x,y)\varphi(\Phi(x,y))\,d\mu(x)\,d\lambda(y).
\end{equation*}
Using a countable dense family of test functions $\varphi$, we conclude that, for $\lambda$-almost every $y$, the measure $\nu_y^\chi$ is absolutely continuous on $J$ with density $h(y,\cdot)\in L^2(J)$. If $\nu_y^\chi$ is nonzero, then its density is nonzero in $L^2(J)$, and hence its support has positive Lebesgue measure.

Applying the input finite-energy estimate and the output Frostman trace estimate to \eqref{eq:regularized-parameter-difference} gives
\begin{equation*}
\|(T_{t,\eps}^\chi-T_{t',\eps}^\chi)\mu_i\|_{L^2(d\lambda(y))}
\lesssim
\min\{1,2^i|t-t'|\}\,2^{-\eta i}.
\end{equation*}
Hence, for every $0<\alpha<\min\{1,\eta\}$,
\begin{equation*}
\|h_\eps(\cdot,t)-h_\eps(\cdot,t')\|_{L^2(d\lambda)}
\lesssim
|t-t'|^\alpha
\end{equation*}
uniformly in $\eps$. If $\eta>\frac{1}{2}$, choose
\begin{equation*}
\frac{1}{2}<\beta<\alpha<\min\{1,\eta\}.
\end{equation*}
The Gagliardo characterization of $H^\beta(J)$ then gives
\begin{equation*}
\sup_{0<\eps<\eps_0}
\|h_\eps\|_{L^2(d\lambda(y);H^\beta(J))}
<\infty.
\end{equation*}
Passing to a further subsequence, $h_\eps$ converges weakly in this Bochner space to the same density $h$ identified above. By the definition of the Bochner space, $h(y,\cdot)\in H^\beta(J)$ for $\lambda$-almost every $y$. Since $\beta>\frac{1}{2}$, the scalar Sobolev embedding of Section~\ref{sec:radon-preliminaries} applies fiberwise and shows that, for $\lambda$-almost every $y$, the measure $\nu_y^\chi$ has a continuous density. Because $\chi$ is nonnegative, this continuous representative is nonnegative. If $\nu_y^\chi$ is nonzero, its density is positive at some point and therefore positive on a nonempty open interval. Since
\begin{equation*}
\supp\nu_y^\chi\subset\Delta_\Phi^y(\supp\mu),
\end{equation*}
the localized pinned configuration set contains a nonempty interval.
\end{proof}

\begin{corollary}[Equal-dimensional scalar configurations]\label{cor:equal-dimensional-scalar}
Suppose $X=Y\subset\R^d$, $\mu=\lambda$ has finite $s$-energy, and the fixed-parameter estimate \eqref{eq:scalar-fixed-estimate} holds with smoothing order $\gamma$ and with the symbol-class uniformity required in Lemma~\ref{lem:scalar-parameter-derivative}. Then every nonzero localized pinned pushforward has an $L^2$ density for $\mu$-almost every pin if
\begin{equation*}
s>d-\gamma,
\end{equation*}
and has a continuous density for $\mu$-almost every pin if
\begin{equation*}
s>d-\gamma+\frac{1}{2}.
\end{equation*}
\end{corollary}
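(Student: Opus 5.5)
This is a direct specialization of Proposition~\ref{prop:scalar-double-pinning}, so I would only compute the exponent $\eta$. Take $X=Y\subset\R^d$, so $d_X=d_Y=d$, and take $\mu=\lambda$, so $s_X=s_Y=s$. The proposition requires $\mu$ and $\lambda$ to be compactly supported probability measures with the stated energies. This holds since $\mu$ is such a measure. The fixed-parameter estimate \eqref{eq:scalar-fixed-estimate} with its symbol-class uniformity is exactly the hypothesis of the corollary, so the proposition applies without change.

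Substituting into the definition of $\eta$ gives
\begin{equation*}
\eta=\gamma+\frac{s-d}{2}+\frac{s-d}{2}=\gamma+s-d.
\end{equation*}
For the first conclusion, the condition $\eta>0$ is equivalent to $s>d-\gamma$. The first part of Proposition~\ref{prop:scalar-double-pinning} then says that, for $\lambda$-almost every pin, every nonzero localized pinned pushforward has an $L^2$ density. Since $\lambda=\mu$, this is a statement about $\mu$-almost every pin.

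For the second conclusion, the condition $\eta>\frac12$ is equivalent to $s>d-\gamma+\frac12$. The second part of the proposition then gives a continuous density for $\mu$-almost every pin.

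There is no genuine obstacle. The only point to check is that the Frostman reduction inside the proof of the proposition, which replaces $\lambda$ by $\lambda_N$, is compatible with $\lambda=\mu$. It is, because the input measure $\mu$ stays the same and only the pin measure is restricted. The hypothesis $0<s<d$ required by Lemma~\ref{lem:energy-trace} is implicit in the finite-energy assumption.
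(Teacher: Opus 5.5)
Your proposal is correct and is essentially the paper's route: the corollary is stated as the direct specialization of Proposition~\ref{prop:scalar-double-pinning} with $d_X=d_Y=d$ and $s_X=s_Y=s$, so that $\eta=\gamma+s-d$, and the thresholds $\eta>0$ and $\eta>\frac12$ become $s>d-\gamma$ and $s>d-\gamma+\frac12$. The paper performs this same substitution with $\gamma=(d-1)/2$ in its proofs of Corollary~\ref{cor:intro-pinned-positive} and Theorem~\ref{thm:intro-pinned-interior}.
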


We now verify that the nonzero hypothesis in Proposition~\ref{prop:scalar-double-pinning} is satisfied on at least one regular patch for almost every pin. For every $y\in Y$, the full pinned pushforward
\begin{equation*}
\nu_y=(\Phi(\cdot,y))_\#\mu
\end{equation*}
has total mass
\begin{equation*}
\nu_y(\R)=\mu(X)=1.
\end{equation*}
Let $U\subset X\times Y$ be the union of the regular patches and let $\{\chi_n\}_{n\geq1}$ be a countable locally finite nonnegative partition of unity on $U$, subordinate to these patches. Suppose
\begin{equation*}
(\mu\times\lambda)((\supp\mu\times\supp\lambda)\setminus U)=0.
\end{equation*}
By Fubini's theorem, for $\lambda$-almost every $y$, the section
\begin{equation*}
U_y=\{x\in\supp\mu:(x,y)\in U\}
\end{equation*}
has full $\mu$-measure. For every such $y$,
\begin{equation*}
\sum_{n\geq1}\nu_y^{\chi_n}(\R)
=
\int_{U_y}\sum_{n\geq1}\chi_n(x,y)\,d\mu(x)
=
1.
\end{equation*}
Therefore at least one localized pinned pushforward $\nu_y^{\chi_n}$ is nonzero. Since
\begin{equation*}
\supp\nu_y^{\chi_n}\subset\Delta_\Phi^y(\supp\mu),
\end{equation*}
a positive-measure or nonempty-interior conclusion for this localized measure gives the corresponding conclusion for the full pinned configuration set.

\begin{proof}[Proof of Corollary~\ref{cor:intro-pinned-positive}]
On each regular patch the nondegenerate Fourier-integral estimate has smoothing order $\gamma=(d-1)/2$ and is uniform for amplitudes in bounded subsets of the relevant symbol class. Apply Proposition~\ref{prop:scalar-double-pinning} with $X=Y=\R^d$ and $\mu=\lambda$. Then
\begin{equation*}
\eta
=
\frac{d-1}{2}+s-d
=
s-\frac{d+1}{2}.
\end{equation*}
The hypothesis $s>(d+1)/2$ gives $\eta>0$. Applying the proposition on each patch, intersecting the resulting countably many full-measure sets of pins, and using the preceding localization argument proves the conclusion.
\end{proof}

\begin{proof}[Proof of Theorem~\ref{thm:intro-pinned-interior}]
On every regular patch, the nondegenerate Fourier-integral estimate has smoothing order $\gamma=(d-1)/2$ and the symbol-class uniformity required in Lemma~\ref{lem:scalar-parameter-derivative}. Thus Proposition~\ref{prop:scalar-double-pinning} applies. Since
\begin{equation*}
\eta
=
s-\frac{d+1}{2},
\end{equation*}
the condition $s>(d+2)/2$ gives $\eta>\frac{1}{2}$. Applying the proposition on each patch and using the preceding localization argument shows that, for almost every pin, at least one localized pinned pushforward has a nonzero continuous density. Its support contains an interval contained in $\Delta_\Phi^y(E)$.
\end{proof}

\begin{remark}[Asymmetric configurations]
Proposition~\ref{prop:scalar-double-pinning} also applies when the point and pin variables lie in spaces of different dimensions. This includes, on regular patches, point--hyperplane, point--sphere, point--line and line--line distance configurations from \cite{GIT19}. The threshold is
\begin{equation*}
\gamma+\frac{s_X-d_X}{2}+\frac{s_Y-d_Y}{2}>0
\end{equation*}
for positive measure, and the same inequality with right-hand side $\frac{1}{2}$ for nonempty interior.
\end{remark}

\section{Examples of scalar configuration maps}\label{sec:scalar-examples}

We now verify the hypotheses of Proposition~\ref{prop:scalar-double-pinning} for the scalar configuration maps appearing in the introduction. In each example, the fixed-parameter estimate follows from the standard Fourier-integral smoothing theorem on regular patches, while Lemma~\ref{lem:scalar-parameter-derivative} supplies the estimate for differentiation in the level parameter.

\subsection{Generalized norm distances}

\begin{proof}[Proof of Corollary~\ref{cor:intro-generalized-distances}]
Let
\begin{equation*}
\Phi_B(x,y)=\|x-y\|_B.
\end{equation*}
For $t$ in a compact subinterval of $(0,\infty)$, the associated operators average over translates of $t\partial B$. The smoothness and nonvanishing Gaussian curvature of $\partial B$, together with stationary phase, give
\begin{equation*}
\|T_t^\chi f_i\|_2
\lesssim
2^{-i(d-1)/2}\|f_i\|_2
\end{equation*}
on every regular patch away from the diagonal. Lemma~\ref{lem:scalar-parameter-derivative} gives
\begin{equation*}
\|\partial_tT_t^\chi f_i\|_2
\lesssim
2^i2^{-i(d-1)/2}\|f_i\|_2.
\end{equation*}
The diagonal has $\mu\times\mu$ measure zero because a finite-energy measure has no atoms. Consequently, the regular patches cover $E\times E$ up to a $\mu\times\mu$ null set. Corollary~\ref{cor:equal-dimensional-scalar}, followed by the localization argument in Section~\ref{sec:scalar-positive}, gives the conclusion with $\gamma=(d-1)/2$.
\end{proof}

The unpinned nonempty-interior conclusion at $s>(d+1)/2$ is the generalized Mattila--Sj\"olin theorem of \cite{IosevichMourgoglouTaylor}, also recovered by \cite[Corollary~1.2]{GIT19}. The pinned positive-measure conclusion at the same threshold belongs to the framework of \cite{IosevichTaylorUriarteTuero}. The parameter-derivative estimate gives the pinned nonempty-interior conclusion at $s>(d+2)/2$.

\subsection{Variable-coefficient and Riemannian distances}

\begin{proof}[Proof of Corollary~\ref{cor:intro-variable-distances}]
On each regular patch, the assumptions on $\phi$ imply that the associated generalized Radon transforms are Fourier integral operators of order $-(d-1)/2$ associated with local canonical graphs. Consequently,
\begin{equation*}
\|T_t^\chi f_i\|_2
\lesssim
2^{-i(d-1)/2}\|f_i\|_2.
\end{equation*}
Lemma~\ref{lem:scalar-parameter-derivative} supplies
\begin{equation*}
\|\partial_tT_t^\chi f_i\|_2
\lesssim
2^i2^{-i(d-1)/2}\|f_i\|_2.
\end{equation*}
Proposition~\ref{prop:scalar-double-pinning} therefore gives the localized pinned nonempty-interior conclusion when $s>(d+2)/2$. The countable regular-patch hypothesis and the localization argument in Section~\ref{sec:scalar-positive} give the global variable-coefficient conclusion.

For the Riemannian distance function, the same argument applies on the coordinate patches in the statement, where the distance is smooth and the associated spherical mean operators have canonical graph relations. The assumed countable covering up to a $\mu\times\mu$ null set permits the same global passage and proves the Riemannian conclusion.
\end{proof}

The pinned positive-measure result at the lower threshold is due to Iosevich, Taylor and Uriarte-Tuero \cite{IosevichTaylorUriarteTuero}; the corresponding local unpinned interval theorem appears in \cite[Corollary~1.3]{GIT19}.

\subsection{Dot products and bilinear forms}

\begin{proof}[Proof of Corollary~\ref{cor:intro-dot-products}]
Let
\begin{equation*}
\Phi_A(x,y)=x^TAy.
\end{equation*}
The gradients are
\begin{equation*}
\nabla_x\Phi_A(x,y)=Ay,
\qquad
\nabla_y\Phi_A(x,y)=A^Tx.
\end{equation*}
Since $A$ is invertible, these gradients do not vanish on a nonzero level. A direct computation of the Phong--Stein rotational curvature shows that the associated generalized Radon transforms are nondegenerate on every compact patch on which $x^TAy$ is bounded away from zero. Hence
\begin{equation*}
\|T_t^\chi f_i\|_2
\lesssim
2^{-i(d-1)/2}\|f_i\|_2.
\end{equation*}
Lemma~\ref{lem:scalar-parameter-derivative} gives
\begin{equation*}
\|\partial_tT_t^\chi f_i\|_2
\lesssim
2^i2^{-i(d-1)/2}\|f_i\|_2.
\end{equation*}
The null-set hypothesis in Corollary~\ref{cor:intro-dot-products} ensures that countably many nonzero-level patches cover $E\times E$ up to a set of $\mu\times\mu$ measure zero. Proposition~\ref{prop:scalar-double-pinning} and the localization argument in Section~\ref{sec:scalar-positive} therefore give the conclusion.
\end{proof}

For $A=I$, this gives a pinned dot-product statement for $\Phi(x,y)=x\cdot y$. In that case this is already a consequence of the work of Bright, Marshall and Senger, see Theorem 1.2 (3) in \cite{BrightMarshallSenger}.

\section{Geometric reduction to triangles}\label{sec:geometric-reduction}

The induction step uses the following elementary energy argument. It is a standard energy-averaging argument in the spirit of classical projection estimates; see, for instance, \cite{Marstrand,Kaufman,MattilaBook,Mattila,OrponenRadial}.

\begin{lemma}[Radial directions with large orthogonal projections]\label{lem:projection}
Let $E\subset\R^d$ be compact and contained in an annulus centered at the origin. Let
\begin{equation*}
\Omega=\pi_0(E)=\left\{\frac{x}{|x|}:x\in E\right\}\subset S^{d-1}.
\end{equation*}
Suppose $\beta<\dimH(E)$ and $\beta<\dimH(\Omega)$. Then there exists $\theta\in\Omega$ such that
\begin{equation*}
\dimH(P_{\theta^\perp}E)\geq \beta.
\end{equation*}
\end{lemma}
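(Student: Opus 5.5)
The plan is a Kaufman-type energy-averaging argument. The twist is that the averaging in the direction variable uses a Frostman measure on $\Omega$ rather than surface measure on $S^{d-1}$. This guarantees that the good direction $\theta$ lies in $\Omega$.

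\emph{Setup.} Since $E$ is compact and avoids a neighbourhood of the origin, the radial projection $\pi_0$ is continuous on $E$, so $\Omega=\pi_0(E)$ is a compact subset of $S^{d-1}$. Fix $\beta'$ with $\beta<\beta'<\min\{\dimH(E),\dimH(\Omega)\}$. By Frostman's lemma applied to $E\subset\R^d$, there is a probability measure $\mu$ on $E$ with $I_\beta(\mu)<\infty$; in fact $\beta'$-Frostman suffices. Applying Frostman's lemma to the compact set $\Omega$, viewed in $\R^d$ or in local charts of the sphere, gives a probability measure $\sigma$ on $\Omega$ with
\begin{equation*}
\sigma(B(e,r))\leq Cr^{\beta'}\quad\text{for all } e\in S^{d-1},\ r>0.
\end{equation*}

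\emph{Key estimate.} The aim is the kernel bound
\begin{equation*}
\int_{S^{d-1}}|P_{\theta^\perp}v|^{-\beta}\,d\sigma(\theta)\leq C|v|^{-\beta},\qquad v\in\R^d\setminus\{0\}.
\end{equation*}
Write $v=|v|e$ with $e\in S^{d-1}$. Then
\begin{equation*}
|P_{\theta^\perp}v|=|v|\sqrt{1-(\theta\cdot e)^2}\gtrsim|v|\min\{|\theta-e|,|\theta+e|\}.
\end{equation*}
So it suffices to bound $\int\min\{|\theta-e|,|\theta+e|\}^{-\beta}\,d\sigma(\theta)$ uniformly in $e$. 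Split into dyadic shells around $\pm e$. The Frostman bound gives a contribution of order $\sum_k 2^{k\beta}2^{-k\beta'}$, which is finite precisely because $\beta<\beta'$. This step is where both hypotheses $\beta<\dimH(\Omega)$ and $\beta<\dimH(E)$ enter, and I expect it to be the only real point of the proof. It replaces the usual computation with surface measure on the sphere, where one would need $\beta<d-1$ instead.

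\emph{Conclusion.} By Fubini and the key estimate,
\begin{equation*}
\int I_\beta\big((P_{\theta^\perp})_\#\mu\big)\,d\sigma(\theta)
=\iint\!\int|P_{\theta^\perp}(x-y)|^{-\beta}\,d\sigma(\theta)\,d\mu(x)\,d\mu(y)
\lesssim I_\beta(\mu)<\infty.
\end{equation*}
Hence $I_\beta\big((P_{\theta^\perp})_\#\mu\big)<\infty$ for $\sigma$-almost every $\theta$. In particular such a $\theta$ exists in $\supp\sigma\subset\Omega$. The measure $(P_{\theta^\perp})_\#\mu$ is a probability measure supported on the compact set $P_{\theta^\perp}E$ and has finite $\beta$-energy. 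By the energy characterization of Hausdorff dimension, $\dimH(P_{\theta^\perp}E)\geq\beta$.

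The one detail to watch is measurability of $\theta\mapsto I_\beta\big((P_{\theta^\perp})_\#\mu\big)$, which Fubini requires. The integrand is a nonnegative lower semicontinuous function of $(x,y,\theta)$, since it is an increasing limit of truncations $\min\{|P_{\theta^\perp}(x-y)|^{-\beta},M\}$, so Tonelli's theorem applies.
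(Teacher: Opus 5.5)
Your proposal is correct and is essentially the paper's own argument: average the $\beta$-energy of $(P_{\theta^\perp})_\#\mu$ against a Frostman measure $\sigma$ on $\Omega$ whose exponent strictly exceeds $\beta$, bound the kernel $\int|P_{\theta^\perp}v|^{-\beta}\,d\sigma(\theta)\lesssim|v|^{-\beta}$ by dyadic shells around $\pm v/|v|$, and take a $\sigma$-typical $\theta\in\Omega$. One small correction: the kernel estimate uses only $\beta<\dimH(\Omega)$, while $\beta<\dimH(E)$ enters only through the existence of $\mu$ with $I_\beta(\mu)<\infty$.
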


\begin{proof}
Choose a probability measure $\mu$ supported on $E$ with finite $\beta$-energy:
\begin{equation*}
I_\beta(\mu)=\iint |x-y|^{-\beta}\,d\mu(x)d\mu(y)<\infty.
\end{equation*}
Choose a probability measure $\sigma$ supported on $\Omega$ and a number $\gamma>\beta$ such that
\begin{equation*}
\sigma(B(\omega,\rho))\lesssim \rho^\gamma
\end{equation*}
for every $\omega\in S^{d-1}$ and $\rho>0$.

We estimate
\begin{equation*}
\int_\Omega I_\beta((P_{\theta^\perp})_\#\mu)\,d\sigma(\theta)
=
\iint
\int_\Omega |P_{\theta^\perp}(x-y)|^{-\beta}\,d\sigma(\theta)\,d\mu(x)d\mu(y).
\end{equation*}
For $v=x-y\neq0$,
\begin{equation*}
|P_{\theta^\perp}v|=|v|\sin\angle(\theta,v).
\end{equation*}
Thus
\begin{equation*}
\int_\Omega |P_{\theta^\perp}v|^{-\beta}\,d\sigma(\theta)
=
|v|^{-\beta}
\int_\Omega \sin(\angle(\theta,v))^{-\beta}\,d\sigma(\theta).
\end{equation*}
The singularity occurs when $\theta$ is close to either $v/|v|$ or $-v/|v|$. Decomposing into dyadic angular annuli around these two points, and using the $\gamma$-Frostman condition, gives
\begin{align*}
\int_\Omega \sin(\angle(\theta,v))^{-\beta}\,d\sigma(\theta)
&\lesssim
\sum_{j\geq0}2^{j\beta}\sigma(B(v/|v|,C2^{-j}))
\\
&\quad+
\sum_{j\geq0}2^{j\beta}\sigma(B(-v/|v|,C2^{-j}))
\\
&\lesssim
\sum_{j\geq0}2^{j(\beta-\gamma)}
<\infty.
\end{align*}
Therefore
\begin{equation*}
\int_\Omega |P_{\theta^\perp}v|^{-\beta}\,d\sigma(\theta)
\lesssim |v|^{-\beta}.
\end{equation*}
It follows that
\begin{equation*}
\int_\Omega I_\beta((P_{\theta^\perp})_\#\mu)\,d\sigma(\theta)
\lesssim I_\beta(\mu)<\infty.
\end{equation*}
Hence for $\sigma$-almost every $\theta\in\Omega$,
\begin{equation*}
I_\beta((P_{\theta^\perp})_\#\mu)<\infty.
\end{equation*}
For such a $\theta$, the support of $(P_{\theta^\perp})_\#\mu$ is contained in $P_{\theta^\perp}E$, and finite $\beta$-energy implies
\begin{equation*}
\dimH(P_{\theta^\perp}E)\geq\beta.
\end{equation*}
\end{proof}

\begin{proof}[Proof of Theorem~\ref{thm:geometric-reduction}]
We prove the positive-measure and nonempty-interior statements in parallel. Fix one of the two conclusions, and write $X(m)$ for the corresponding threshold, namely $X_{\mathrm{pm}}(m)$ in the positive-measure case and $X_{\mathrm{int}}(m)$ in the nonempty-interior case. In either case, the conclusion is preserved under multiplication by a nonzero scalar and under passage to a larger set.

We first establish an auxiliary strongly pinned assertion. For $2\leq r\leq m$, suppose that $F\subset\R^m$ is compact and
\begin{equation*}
\dimH(F)>X(m-r+2)+r-2.
\end{equation*}
We claim that $\Vol_r^0(F)$ has the chosen conclusion. The case $r=2$ is the corresponding assumed triangle theorem.

Assume $r>2$ and that the claim is known for rank $r-1$ in dimension $m-1$. Choose numbers $\alpha$ and $\beta$ such that
\begin{equation*}
X(m-r+2)+r-3<\beta<\alpha<\dimH(F)-1.
\end{equation*}
By countable stability of Hausdorff dimension, $F$ has a compact subset, still denoted by $F$, contained in an annulus centered at the origin and satisfying $\dimH(F)>\alpha+1$. If
\begin{equation*}
\Omega=\left\{\frac{x}{|x|}:x\in F\right\},
\end{equation*}

On the chosen annulus, the polar-coordinate map
\begin{equation*}
x\longmapsto\left(\frac{x}{|x|},|x|\right)
\end{equation*}
is bi-Lipschitz onto its image. Since the radial coordinate is one-dimensional, the standard product-dimension inequality gives
\begin{equation*}
\dimH(\Omega)
\geq
\dimH(F)-1
>
\alpha
>
\beta.
\end{equation*}

Lemma~\ref{lem:projection} therefore supplies $\theta\in\Omega$ with
\begin{equation*}
\dimH(P_{\theta^\perp}F)\geq\beta>X(m-r+2)+r-3.
\end{equation*}
Choose $\rho>0$ with $\rho\theta\in F$. Since
\begin{equation*}
|x_1\wedge\cdots\wedge x_{r-1}\wedge\rho\theta|
=
\rho\,|P_{\theta^\perp}x_1\wedge\cdots\wedge P_{\theta^\perp}x_{r-1}|,
\end{equation*}
the induction hypothesis in $\theta^\perp\simeq\R^{m-1}$ applies, because its threshold is
\begin{equation*}
X((m-1)-(r-1)+2)+(r-1)-2
=
X(m-r+2)+r-3.
\end{equation*}
Thus $\Vol_r^0(F)$ has the chosen conclusion, proving the auxiliary assertion.

We now return to the doubly pinned problem. Translate so that $x_0=0$, fix $y\neq0$, let $H=y^\perp$, and set $F=P_HE$. 

The fibers of $P_H$ are affine lines. The standard dimension inequality for Lipschitz maps therefore gives
\begin{equation*}
\dimH(E)
\leq
\dimH(P_HE)+1.
\end{equation*}
Consequently,
\begin{equation*}
\dimH(F)
\geq
\dimH(E)-1
>
X(d-k+2)+k-3.
\end{equation*}

This is exactly the auxiliary threshold for rank $k-1$ in dimension $d-1$. Moreover,
\begin{equation*}
\Vol_k^{0,y}(E)=|y|\,\Vol_{k-1}^0(F).
\end{equation*}
Indeed, components parallel to $y$ vanish in the wedge product, and every projected configuration lifts by choosing preimages in $E$. The auxiliary assertion gives the chosen conclusion for $\Vol_{k-1}^0(F)$, and multiplication by $|y|>0$ preserves it. Applying this argument first with $X=X_{\mathrm{pm}}$ and then with $X=X_{\mathrm{int}}$ proves both parts of the theorem.
\end{proof}
\begin{lemma}[Nullity of orthogonal pairs]\label{lem:orthogonal-pairs-null}
Let $\lambda$ and $\eta$ be compactly supported Frostman measures on $\R^d$ with exponents $a>0$ and $b>0$. If
\begin{equation*}
a+b>d,
\end{equation*}
then
\begin{equation*}
(\lambda\times\eta)
\bigl(\{(x,y)\in\R^d\times\R^d:x\cdot y=0\}\bigr)
=
0.
\end{equation*}
The same conclusion holds if $I_a(\lambda)<\infty$ and $I_b(\eta)<\infty$.
\end{lemma}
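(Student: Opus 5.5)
By Fubini's theorem it suffices to show that, for $\lambda$-almost every $x$, the hyperplane $x^\perp=\{y:x\cdot y=0\}$ has $\eta$-measure zero. This reduces the statement to a question about hyperplanes through the origin: we need the $\eta$-mass of such hyperplanes to be zero for most normals $x$.

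\textbf{Step 1: dispose of $x=0$ and rescale.} First, $\lambda(\{0\})=0$, since a Frostman measure with $a>0$ has no atoms. For $x\neq0$, the set $x^\perp$ depends only on the direction $x/|x|\in S^{d-1}$. Decompose $\R^d\setminus\{0\}$ into dyadic annuli $A_k=\{2^{-k-1}<|x|\le 2^{-k}\}$. On each annulus the radial projection $x\mapsto x/|x|$ is Lipschitz, with the reverse control needed for measures.

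\textbf{Step 2: average the thickened hyperplane mass.} For $\delta>0$ set $N_\delta(\theta)=\{y:|\theta\cdot y|<\delta\}$, the $\delta$-slab around $\theta^\perp$. The core estimate is
\begin{equation*}
\int_{A_k}\eta\bigl(N_\delta(x/|x|)\bigr)\,d\lambda(x)
=\int\lambda\bigl(\{x\in A_k:|x\cdot y|<\delta|x|\}\bigr)\,d\eta(y).
\end{equation*}
For fixed $y\neq0$, the inner set is contained in the slab of width about $2^{-k}\delta/|y|$ around $y^\perp$, intersected with the ball of radius $2^{-k}$.

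One might cover this slab by about $(\delta/|y|)^{-(d-1)}$ balls of radius $2^{-k}\delta/|y|$ and apply the $a$-Frostman bound. That only gives $\lesssim(\delta/|y|)^{a-(d-1)}$, which is useless when $a\le d-1$. So the two measures must genuinely be combined.

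\textbf{Step 2, symmetric version.} The better route is to bound $(\lambda\times\eta)(\{|x\cdot y|<\delta\})$ directly via Fourier analysis. Take $\psi$ a bump with $\widehat\psi\ge0$, so that $\mathbf 1_{|u|<\delta}\lesssim\psi(u/\delta)$. Then
\begin{equation*}
(\lambda\times\eta)(\{|x\cdot y|<\delta\})
\lesssim \delta\int\widehat\psi(\delta r)\iint e^{2\pi i r x\cdot y}\,d\lambda(x)\,d\eta(y)\,dr
=\delta\int\widehat\psi(\delta r)\int\widehat\lambda(-ry)\,d\eta(y)\,dr .
\end{equation*}
The inner integral is $\int\widehat\lambda(-ry)\,d\eta(y)$, a pairing of $\widehat\lambda$ against the dilate of $\eta$ by $r$. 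By Plancherel this equals $r^{-d}\int\lambda(z)\,\widehat\eta(z/r)\,dz$, taken in the distributional sense. The energy bounds $\int|\widehat\lambda|^2|\xi|^{a-d}\,d\xi<\infty$ and the analogous one for $\eta$ then give, via Cauchy--Schwarz with the weights $|\xi|^{(a-d)/2}$ and $|\xi|^{(d-a)/2}$, a bound of order $r^{-(a+b-d)/2}$ for large $r$. Since $\widehat\psi(\delta r)$ localizes to $r\lesssim\delta^{-1}$, the total is $\lesssim\delta\int_{1}^{\delta^{-1}}r^{-(a+b-d)/2}\,dr+\delta$. This tends to $0$ as long as $(a+b-d)/2>0$, which is exactly the hypothesis $a+b>d$. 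Letting $\delta\to0$ shows that $\{x\cdot y=0\}$ is $\lambda\times\eta$-null.

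\textbf{Main obstacle.} The delicate step is making the Plancherel pairing rigorous at the dilated scale, and identifying the exponent. Concretely, I would write
\begin{equation*}
\int\widehat\lambda(-ry)\,d\eta(y)=\int\widehat\lambda(\xi)\,\overline{\widehat{\eta_r}}(\xi)\,d\xi ,
\end{equation*}
where $\eta_r$ is the pushforward of $\eta$ under $y\mapsto -ry$, after first mollifying both measures. The dilation contributes the factor $r^{-b}$ through $I_b(\eta_r)=r^{-b}I_b(\eta)$. The Cauchy--Schwarz pairing with weights $|\xi|^{(a-d)/2}$ and $|\xi|^{(b-d)/2}$ then requires $a+b\ge d$. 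I expect this to give $\lesssim r^{-b/2}\cdot r^{\,\cdot}$-type decay. If the exponent bookkeeping fails, the fallback is a single-scale argument: estimate the pairing at each dyadic frequency piece using Lemma~\ref{lem:energy-trace}(1) for both measures, then sum.

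The finite-energy version follows from Lemma~\ref{lem:energy-trace}(3). Restrict $\lambda$ and $\eta$ to the increasing sets on which they are Frostman with exponents $a$ and $b$, and apply the Frostman case to each restriction; by countable additivity the orthogonal-pair set is null for the full product.
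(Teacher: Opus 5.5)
Your reduction is fine. With a Gaussian $\psi$ you get $(\lambda\times\eta)(\{|x\cdot y|<\delta\})\lesssim\delta\int\widehat\psi(\delta r)\,|\widehat\nu(r)|\,dr$, where $\widehat\nu(r)=\iint e^{2\pi i r x\cdot y}\,d\lambda(x)\,d\eta(y)$ is the Fourier transform of the pushforward of $\lambda\times\eta$ under $(x,y)\mapsto x\cdot y$. Everything then rests on the decay $|\widehat\nu(r)|\lesssim r^{-(a+b-d)/2}$, and the derivation you sketch for it does not work.

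\begin{itemize}
\item The identity $\int\widehat\lambda(-ry)\,d\eta(y)=\int\widehat\lambda\,\overline{\widehat{\eta_r}}\,d\xi$ is false. The left side pairs $\widehat\lambda$ with the measure $\eta_r$ itself; by Fubini it is simply $\int\widehat\eta(-rz)\,d\lambda(z)$, not $r^{-d}\int\lambda(z)\widehat\eta(z/r)\,dz$. By Plancherel, the right side is the spatial overlap pairing of $\lambda$ with $\eta_r$, which has nothing to do with the oscillatory form.
\item Cauchy--Schwarz with weights $|\xi|^{(a-d)/2}$ and $|\xi|^{(b-d)/2}$ bounds nothing, because the weights do not multiply to $1$. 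Complementary weights $|\xi|^{\pm(a-d)/2}$ would require a positive-order Sobolev norm of $\eta_r$, which is infinite for singular $\eta$.
\item Your dyadic fallback via Lemma~\ref{lem:energy-trace}(1) for both measures omits the source of the gain. Energy bounds only control the size of the frequency pieces; on their own they give no decay in $r$.
\end{itemize}

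The estimate is nevertheless true. The missing idea is to use the dimension of $\eta$ at scale $1/r$ (a trace estimate as in Lemma~\ref{lem:energy-trace}(2)) after Cauchy--Schwarz in $L^2(\eta)$: $|\widehat\nu(r)|^2\le\int|\widehat\lambda(-ry)|^2\,d\eta(y)$.
\begin{itemize}
\item Take $\phi_0\in C_c^\infty$ equal to $1$ on $\supp\lambda$. Then $\widehat\lambda=\widehat{\phi_0}*\widehat\lambda$, hence $|\widehat\lambda(\xi)|^2\lesssim\int|\widehat{\phi_0}(\xi-\zeta)|\,|\widehat\lambda(\zeta)|^2\,d\zeta$.
\item The $b$-Frostman condition gives $\int|\widehat{\phi_0}(ry+\zeta)|\,d\eta(y)\lesssim r^{-b}$ uniformly in $\zeta$. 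Because $\eta$ has compact support, this decays rapidly once $|\zeta|\gg r$.
\item The $a$-Frostman condition gives $\int_{|\zeta|\le R}|\widehat\lambda|^2\lesssim R^{d-a}$. Use this directly, since an $a$-Frostman measure need not satisfy $I_a(\lambda)<\infty$.
\end{itemize}
Together these yield $|\widehat\nu(r)|\lesssim r^{-(a+b-d)/2}$. Equivalently, your fallback works once you add that the localized rescaled Fourier transform $f\mapsto\phi\,\widehat{\phi f}(-r\,\cdot)$ has $L^2$ norm $\lesssim r^{-d/2}$ and only sees frequencies $\lesssim r$; that version needs only finite energies. After this, your $\delta\to0$ step and the passage to the finite-energy case via Lemma~\ref{lem:energy-trace}(3) go through as in the paper.

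The paper's own proof is quite different and uses no Fourier analysis. If the orthogonal set had positive measure, Fubini and the countability of $\eta$-minimal subspaces (distinct ones meet in $\eta$-null sets) produce one subspace $V$ with $\lambda(V^\perp)>0$ and $\eta(V)>0$. The Frostman conditions then force $a+b\le\dim V^\perp+\dim V=d$. That argument is shorter and identifies the exact obstruction. Your repaired route is heavier but quantitative: it gives $(\lambda\times\eta)(\{|x\cdot y|<\delta\})\lesssim\delta^{\min\{1,(a+b-d)/2\}}$, up to a logarithm when $a+b-d=2$.
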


\begin{proof}
Suppose that the orthogonality set has positive product measure. Since positive-exponent Frostman measures are nonatomic, Fubini's theorem gives a set $X\subset\R^d\setminus\{0\}$ of positive $\lambda$-measure such that
\begin{equation*}
\eta(x^\perp)>0
\end{equation*}
for every $x\in X$.

Call a linear subspace $V$ $\eta$-minimal if $\eta(V)>0$ but every proper linear subspace of $V$ has zero $\eta$-measure. Every positive-$\eta$ subspace contains an $\eta$-minimal subspace. Moreover, there are at most countably many $\eta$-minimal subspaces, because distinct minimal subspaces have $\eta$-null intersection and hence their indicator functions form a pairwise orthogonal family in $L^2(\eta)$. Enumerate them as $\{V_j\}_{j\geq1}$.

For every $x\in X$, the positive-$\eta$ subspace $x^\perp$ contains some $V_j$. Hence
\begin{equation*}
X\subset\bigcup_{j\geq1}V_j^\perp.
\end{equation*}
It follows that, for some $j$,
\begin{equation*}
\lambda(V_j^\perp)>0,
\qquad
\eta(V_j)>0.
\end{equation*}
The Frostman conditions imply
\begin{equation*}
a\leq\dim V_j^\perp,
\qquad
b\leq\dim V_j.
\end{equation*}
Therefore
\begin{equation*}
a+b
\leq
\dim V_j^\perp+\dim V_j
=
d,
\end{equation*}
contrary to the hypothesis.

If $I_a(\lambda)<\infty$ and $I_b(\eta)<\infty$, Lemma~\ref{lem:energy-trace} gives increasing Borel sets $E_N$ and $F_M$ such that
\begin{equation*}
\lambda\left(\R^d\setminus\bigcup_{N\geq1}E_N\right)=0,
\qquad
\eta\left(\R^d\setminus\bigcup_{M\geq1}F_M\right)=0,
\end{equation*}
and such that $\lambda|_{E_N}$ is $a$-Frostman and $\eta|_{F_M}$ is $b$-Frostman for every $N,M$. Applying the preceding argument to each pair
\begin{equation*}
\lambda|_{E_N},
\qquad
\eta|_{F_M},
\end{equation*}
shows that the orthogonality set has zero $(\lambda|_{E_N})\times(\eta|_{F_M})$-measure. Since
\begin{equation*}
\bigcup_{N,M\geq1}E_N\times F_M
\end{equation*}
has full $\lambda\times\eta$ measure, the orthogonality set has zero $\lambda\times\eta$ measure.

\end{proof}

\section{The triangle-area estimate}\label{sec:triangle-area}

We prove Theorem \ref{thm:area}. By translating, we assume $x_0=0$. Set
\begin{equation*}
A(x_1,x_2)=|x_1\wedge x_2|.
\end{equation*}
We shall use the localized Leray-measure operators associated to the level relation
\begin{equation*}
A(x_1,x_2)=t.
\end{equation*}
The localization is part of the definition of the operator, and it will be chosen on a regular patch where the geometry of the fibers is uniform.

Fix compact intervals $I,I^*\subset(0,\infty)$ with
$I\subset\operatorname{int}(I^*)$, and fix
$\chi\in C_c^\infty(\R^d\times\R^d)$ supported where
\begin{equation*}
0<a\leq |x_2|\leq b<\infty,
\qquad
|x_1\wedge x_2|\in I^*,
\qquad
|x_1\cdot x_2|\geq c_0>0.
\end{equation*}
For $x_2\neq0$, write $x_2=r\theta$, where $r=|x_2|$ and $\theta=x_2/|x_2|\in S^{d-1}$. We denote by $P_{\theta^\perp}$ the orthogonal projection onto the hyperplane $\theta^\perp$. Then
\begin{equation*}
A(x_1,x_2)=r|P_{\theta^\perp}x_1|.
\end{equation*}
Thus, on the support of $\chi$, the level set
\begin{equation*}
\Sigma_{x_2,t}=\{x_1\in\R^d:A(x_1,x_2)=t\}
\end{equation*}
is a smooth hypersurface for every $t\in I$. Indeed, on this region $A(x_1,x_2)>0$, hence $P_{\theta^\perp}x_1\neq0$. Since $P_{\theta^\perp}$ is self-adjoint, for every $h\in\R^d$,
\begin{equation*}
d_{x_1}A(x_1,x_2)[h]
=
r\frac{P_{\theta^\perp}x_1}{|P_{\theta^\perp}x_1|}\cdot P_{\theta^\perp}h
=
r\frac{P_{\theta^\perp}x_1}{|P_{\theta^\perp}x_1|}\cdot h.
\end{equation*}
Therefore
\begin{equation*}
\nabla_{x_1}A(x_1,x_2)=r\frac{P_{\theta^\perp}x_1}{|P_{\theta^\perp}x_1|},
\qquad
|\nabla_{x_1}A(x_1,x_2)|=r=|x_2|.
\end{equation*}
The lower bound $|x_2|\geq a$ on the support of $\chi$ gives the required regularity of the level set.

For $f\in C_c^\infty(\R^d)$, define
\begin{equation*}
T_t^\chi f(x_2)
=
\int_{\Sigma_{x_2,t}}
f(x_1)\chi(x_1,x_2)
\frac{d\mathcal H^{d-1}(x_1)}{|\nabla_{x_1}A(x_1,x_2)|}.
\end{equation*}
Equivalently,
\begin{equation*}
T_t^\chi f(x_2)
=
\int_{\R^d}
f(x_1)\chi(x_1,x_2)\delta(A(x_1,x_2)-t)\,dx_1,
\end{equation*}
where the delta notation is understood in the Leray-measure sense. In the proof below we suppress the cutoff from the notation and write $T_t$ when no confusion can arise.

We use the level-parameter regularization introduced in Section~\ref{sec:radon-preliminaries}. In the present setting, the coarea formula gives
\begin{equation*}
T_{t,\eps}^\chi f(x_2)
=
\int_{\R^d}
f(x_1)\chi(x_1,x_2)\rho_\eps(A(x_1,x_2)-t)\,dx_1.
\end{equation*}
Thus the smoothing is applied to the localized level-set measure in the parameter $t$, while the Frostman measure remains fixed.

\begin{lemma}[Cylinder averaging estimate]\label{lem:cylinder}
Let $d\geq2$, let $I\subset(0,\infty)$ be a compact interval, and let $T_t^\chi$ be the localized triangle-area Radon transform defined above. Suppose
\begin{equation*}
\supp\widehat f_i\subset\{\xi\in\R^d:|\xi|\approx2^i\}.
\end{equation*}
Then, uniformly for $t\in I$,
\begin{equation*}
\|T_t^\chi f_i\|_2
\lesssim_{\chi,I}
2^{-i(d-1)/2}\|f_i\|_2.
\end{equation*}
Moreover, uniformly for $t,t'\in I$,
\begin{equation*}
\|(T_t^\chi-T_{t'}^\chi)f_i\|_2
\lesssim_{\chi,I}
\min(1,2^i|t-t'|)2^{-i(d-1)/2}\|f_i\|_2.
\end{equation*}
The same estimates hold uniformly for the regularized operators $T_{t,\eps}^\chi$, for $0<\eps<\eps_0$, where $\eps_0$ is chosen so that the support of $\rho_\eps(\tau-t)$ remains inside $I^*$ whenever $t\in I$.
\end{lemma}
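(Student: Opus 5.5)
The plan is to show that, on the support of $\chi$, the family $T_t^\chi$ is a nondegenerate Fourier integral operator of order $-(d-1)/2$ associated with a local canonical graph. The $L^2$ estimate at frequency $2^i$ then follows from the standard H\"ormander theorem. The difference estimate follows from Lemma~\ref{lem:scalar-parameter-derivative}, and the regularized version from the level-parameter averaging in Section~\ref{sec:radon-preliminaries}.

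\textbf{Step 1: the cylinder geometry.} For fixed $x_2=r\theta$, the fiber $\Sigma_{x_2,t}$ is the cylinder
\begin{equation*}
\{x_1:|P_{\theta^\perp}x_1|=t/r\}=(t/r)S_{\theta^\perp}\times\R\theta .
\end{equation*}
Its cross-section is a round sphere of dimension $d-2$ inside $\theta^\perp$, so it has $d-2$ nonvanishing principal curvatures. The axial direction is flat.

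\textbf{Step 2: the kernel and the rotational curvature.} Write the kernel as
\begin{equation*}
K_t(x_2,x_1)=\int e^{2\pi i\lambda(A(x_1,x_2)-t)}a\,d\lambda .
\end{equation*}
We need the Phong--Stein rotational curvature, namely the $(d+1)\times(d+1)$ determinant
\begin{equation*}
\det\begin{pmatrix}0&\nabla_{x_1}A\\ \nabla_{x_2}A^T&\nabla^2_{x_1x_2}A\end{pmatrix},
\end{equation*}
to be nonvanishing on $\supp\chi$. This gives a local canonical graph and hence the decay $2^{-i(d-1)/2}$.

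The intuition is as follows. Varying $x_2$ rotates the axis $\theta$ and rescales the radius $t/r$. The $d-2$ curved directions of the cross-section provide $d-2$ units of curvature. The remaining unit of curvature must come from the fact that the axial direction moves as $\theta$ rotates. This is exactly where the cutoff $|x_1\cdot x_2|\ge c_0$ enters: it keeps $x_1$ away from the hyperplane $\theta^\perp$, so rotating the axis tilts the tangent plane at $x_1$ in the axial direction at a nondegenerate rate proportional to $x_1\cdot\theta$.

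\textbf{The main obstacle} is computing this determinant. I would first use rotation and scaling invariance to normalize $\theta=e_d$ and $r=1$, with $x_1=(\rho\omega,s)$, where $\omega\in S^{d-2}$, $\rho=t>0$ and $s=x_1\cdot\theta\neq0$. Then I would compute $\nabla_{x_1}A$, $\nabla_{x_2}A$ and the mixed Hessian explicitly, using
\begin{equation*}
A^2=|x_1|^2|x_2|^2-(x_1\cdot x_2)^2 .
\end{equation*}
Differentiating $A^2$ is much cleaner than differentiating $A$. I expect the determinant to be a nonzero multiple of $\rho^{-(d-2)}s^2$, or a similar expression, which is bounded away from zero on the patch. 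If the direct determinant is messy, a fallback is to check that the projections of the canonical relation
\begin{equation*}
\{(x_2,\lambda\nabla_{x_2}A;\,x_1,-\lambda\nabla_{x_1}A)\}
\end{equation*}
onto each factor are local diffeomorphisms. This amounts to showing that the map $(x_1,\lambda)\mapsto\lambda\nabla_{x_2}A(x_1,x_2)$, restricted to $\Sigma_{x_2,t}$, has injective differential.

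\textbf{Step 3: the estimates.} Once nondegeneracy holds, uniformly for $t\in I^*$ with amplitudes in a bounded symbol set, H\"ormander's $L^2$ theorem for FIOs with canonical graphs, combined with the Littlewood--Paley localization, gives
\begin{equation*}
\|T_t^\chi f_i\|_2\lesssim 2^{-i(d-1)/2}\|f_i\|_2 .
\end{equation*}
Lemma~\ref{lem:scalar-parameter-derivative} with $N=1$ and the fundamental theorem of calculus then give the factor $2^i|t-t'|$. This is combined with the trivial bound obtained by the triangle inequality, which gives the minimum. Finally, Minkowski's inequality with $\|\rho_\eps\|_1=1$, together with the fact that $\rho_\eps(\cdot-t)$ is supported in $I^*$, transfers both bounds to $T_{t,\eps}^\chi$ uniformly in $\eps$.
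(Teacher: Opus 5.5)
Your outline matches the paper's proof step for step. You pass to $\Psi=A^2$, which has the same canonical relation. You normalize $x_2=re_d$, $x_1=ve_1+ue_d$ and check the Phong--Stein determinant on $\supp\chi$. You then apply the $L^2$ bound for canonical graphs of order $-(d-1)/2$. Finally, you get the difference and regularized estimates from Lemma~\ref{lem:scalar-parameter-derivative}, the fundamental theorem of calculus, the trivial bound, and Minkowski. What is missing is the one computation that carries the content of the lemma. You flag it as the main obstacle but only forecast its outcome, and both the forecast and the heuristic behind it are wrong.

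The paper finds that the bordered determinant for $\Psi$ is a nonzero constant times $\Psi^2(x_1\cdot x_2)^{d-2}$. On $\operatorname{span}\{x_1,x_2\}^\perp$ the borders vanish and the mixed Hessian $4x_1x_2^T-2x_2x_1^T-2(x_1\cdot x_2)I_d$ acts as $-2(x_1\cdot x_2)I_{d-2}$. The remaining bordered $3\times3$ block is a nonzero multiple of $r^4v^4=\Psi^2$. So the degeneration at $x_1\perp x_2$ has order $d-2$, not $2$. It comes from the $d-2$ cross-sectional (spherical) directions, while the axial direction is nondegenerate as soon as $A\neq0$. This is the reverse of your heuristic.

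Your fallback shows this most quickly. Take $x_2=r\theta$ and $x_1=\rho\omega+s\theta\in\Sigma_{x_2,t}$, with $\rho=t/r$ fixed. Then $\nabla_{x_2}A=\rho\theta-s\omega$ exactly. The map $(\omega,s,\lambda)\mapsto\lambda(\rho\theta-s\omega)$ has partial derivatives $\rho\theta-s\omega$, $-\lambda\omega$, and $-\lambda s\,\delta\omega$ for $\delta\omega\in T_\omega S^{d-2}$. Its Jacobian therefore has absolute value $\rho|\lambda|^{d-1}|s|^{d-2}$. The other projection is then a local diffeomorphism too, because the relation is Lagrangian. On $\supp\chi$ we have $A\in I^*$ and $|x_1\cdot x_2|\geq c_0$, so this Jacobian is bounded below. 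Your conclusion therefore stands once the computation is actually carried out.
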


\begin{proof}
Set
\begin{equation*}
\Psi(x_1,x_2)
=
|x_1\wedge x_2|^2
=
|x_1|^2|x_2|^2-(x_1\cdot x_2)^2.
\end{equation*}
Since $t$ is bounded away from zero, the relation
\begin{equation*}
|x_1\wedge x_2|=t
\end{equation*}
is equivalent to
\begin{equation*}
\Psi(x_1,x_2)=t^2,
\end{equation*}
and
\begin{equation*}
\delta(|x_1\wedge x_2|-t)
=
2t\,\delta(\Psi(x_1,x_2)-t^2).
\end{equation*}
The factor $2t$ is harmless on the fixed compact interval $I$. Moreover, on the region where $A(x_1,x_2)>0$,
\begin{equation*}
d\Psi=2A\,dA.
\end{equation*}
Hence the level relations $\Psi=t^2$ and $A=t$ have the same conormal bundle,
up to multiplication of the conormal parameter by the nonzero factor $2t$.
Thus they determine the same canonical relation.

We verify the rotational-curvature condition for $\Psi$. Direct differentiation gives
\begin{equation*}
\nabla_{x_1}\Psi
=
2\bigl(|x_2|^2x_1-(x_1\cdot x_2)x_2\bigr),
\end{equation*}
\begin{equation*}
\nabla_{x_2}\Psi
=
2\bigl(|x_1|^2x_2-(x_1\cdot x_2)x_1\bigr),
\end{equation*}
and
\begin{equation*}
D_{x_2}D_{x_1}\Psi
=
4x_1x_2^T
-
2x_2x_1^T
-
2(x_1\cdot x_2)I_d.
\end{equation*}
Consequently,
\begin{equation}\label{eq:area-rotational-curvature}
\det
\begin{pmatrix}
0 & \nabla_{x_1}\Psi^T \\
\nabla_{x_2}\Psi & D_{x_2}D_{x_1}\Psi
\end{pmatrix}
=
(-1)^d2^{d+2}\Psi^2(x_1\cdot x_2)^{d-2}.
\end{equation}

To verify \eqref{eq:area-rotational-curvature}, apply a simultaneous
orthogonal transformation and write
\begin{equation*}
x_2=re_d,
\qquad
x_1=ve_1+ue_d.
\end{equation*}
Then
\begin{equation*}
\Psi=r^2v^2,
\qquad
x_1\cdot x_2=ur.
\end{equation*}
In the orthogonal complement of $\operatorname{span}\{e_1,e_d\}$, the mixed
Hessian is $-2urI_{d-2}$. The remaining bordered block is
\begin{equation*}
\begin{pmatrix}
0&2r^2v&0\\
-2urv&-2ur&4rv\\
2rv^2&-2rv&0
\end{pmatrix},
\end{equation*}
whose determinant is $16r^4v^4$. Hence the full determinant is
\begin{equation*}
16r^4v^4(-2ur)^{d-2}
=
(-1)^d2^{d+2}\Psi^2(x_1\cdot x_2)^{d-2}.
\end{equation*}

On the support of $\chi$, both $\Psi$ and $|x_1\cdot x_2|$ are bounded away from zero. Hence the determinant in
\eqref{eq:area-rotational-curvature} is uniformly nonzero. The nonvanishing rotational-curvature determinant implies that the associated
canonical relation is locally a canonical graph. Since a codimension-one
generalized Radon transform associated with a local canonical graph gains
$\frac{d-1}{2}$ derivatives on $L^2$ Sobolev spaces
\cite[Section~1, equations~(1.5)--(1.7)]{GIT19}, we obtain
\begin{equation*}
\|T_t^\chi f_i\|_2
\lesssim_{\chi,I}
2^{-i(d-1)/2}\|f_i\|_2
\end{equation*}
uniformly for $t\in I^*$.

The symbol seminorms and the rotational-curvature lower bound are uniform for $t$ in the compact interval $I^*$. Lemma~\ref{lem:scalar-parameter-derivative} therefore gives
\begin{equation*}
\|\partial_tT_t^\chi f_i\|_2
\lesssim_{\chi,I}
2^i2^{-i(d-1)/2}\|f_i\|_2.
\end{equation*}
Combining this estimate with the fixed-parameter bound and the fundamental theorem of calculus yields
\begin{equation*}
\|(T_t^\chi-T_{t'}^\chi)f_i\|_2
\lesssim_{\chi,I}
\min\{1,2^i|t-t'|\}
2^{-i(d-1)/2}\|f_i\|_2.
\end{equation*}
The corresponding estimates for $T_{t,\eps}^\chi$ follow from Lemma~\ref{lem:scalar-parameter-derivative}.
\end{proof}

\begin{lemma}[Frequency compatibility]\label{lem:output-frequency}
Let $P_i$ and $P_j$ be smooth Littlewood--Paley projections. For the localized triangle-area operators above, there is $C_0\geq1$ such that, for every $M>0$ and $j\geq i+C_0$,
\begin{equation*}
\|P_jT_{t,\eps}^\chi P_i f\|_2
\lesssim_{M,\chi,I}
2^{-M(j-i)}2^{-i(d-1)/2}\|P_i f\|_2,
\end{equation*}
uniformly for $t\in I$ and $0<\eps<\eps_0$. The analogous estimate holds for $T_{t,\eps}^\chi-T_{t',\eps}^\chi$, with the additional factor $\min(1,2^i|t-t'|)$.
\end{lemma}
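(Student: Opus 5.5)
The plan is to follow the argument of Lemma~\ref{lem:scalar-frequency-compatibility}, now keeping track of the operator norm on the diagonal block and of the size of the output frequency relative to the input frequency. We write the kernel of the regularized operator as an oscillatory integral:
\begin{equation*}
T_{t,\eps}^\chi f(x_2)
=
\iint e^{2\pi i\lambda(A(x_1,x_2)-\tau)}\,\rho_\eps(\tau-t)\,a(x_1,x_2,\lambda)\,f(x_1)\,d\lambda\,d\tau\,dx_1 .
\end{equation*}
Equivalently, it is an average in $\tau\in I^*$ of the operators $T_\tau^\chi$, with weights of total mass one. It therefore suffices to prove the estimate for $T_\tau^\chi$, uniformly for $\tau\in I^*$. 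The same reduction, applied to $\partial_\tau T_\tau^\chi$, handles the difference operators.

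\textbf{Geometric input.} On the support of $\chi$ we have $|\nabla_{x_1}A|=|x_2|\in[a,b]$. We also have
\begin{equation*}
|\nabla_{x_2}A|=\frac{|\nabla_{x_2}\Psi|}{2A},
\end{equation*}
which is bounded above and below because $\Psi$, $|x_1|$ and $|x_2|$ are bounded above and below on the support. Hence, on the canonical relation, the output covector $\lambda\nabla_{x_2}A$ and the input covector $\lambda\nabla_{x_1}A$ have comparable sizes, with comparability constant $C$. Choose $C_0$ with $2^{C_0}\gg C$.

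\textbf{Main step.} Write $P_jT_\tau P_i f=P_jT_\tau\tilde P_i P_i f$, where $\tilde P_i$ is a fattened projection. The composite $P_jT_\tau\tilde P_i$ has kernel
\begin{equation*}
\iiint e^{2\pi i(\langle x_2-y,\eta\rangle+\lambda(A(x_1,y)-\tau)+\langle z-x_1,\xi\rangle)}\,\cdots ,
\end{equation*}
with $|\eta|\approx2^j$ and $|\xi|\approx2^i$. First, in the region $|\lambda|\ll2^j$, integrate by parts in $y$: the $y$-gradient of the phase is
\begin{equation*}
-\eta+\lambda\nabla_yA,
\end{equation*}
and its size is $\gtrsim|\eta|+|\lambda|\gtrsim2^j$. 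Second, in the region $|\lambda|\gtrsim2^{j}/C$, the input covector has size $\gtrsim2^j/C^2\gg2^i$. Integration by parts in $x_1$, where the $x_1$-gradient is $\lambda\nabla_{x_1}A-\xi$, then gives decay $(|\lambda|+2^i)^{-M}$. In both regions each integration by parts gains a factor $2^{-j}$ (or $|\lambda|^{-1}$) and costs at most derivatives of smooth cutoffs of size $O(1)$. The amplitude is a symbol of order $0$ in $\lambda$; the Leray density has order $0$ and the $d\lambda$ integration is absorbed by the decay.

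From this we get a kernel bound $O_M(2^{-Mj})$ with rapid decay off compact sets. Schur's test then gives
\begin{equation*}
\|P_jT_\tau P_i\|_{L^2\to L^2}\lesssim_M 2^{-Mj}\leq 2^{-M(j-i)}2^{-i(d-1)/2}
\end{equation*}
after renaming $M$, since $j\geq i$. Because the argument only uses these kernel bounds, the constants are uniform in $\tau\in I^*$.

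\textbf{Difference operators.} For $T_{t,\eps}-T_{t',\eps}$ we take the better of two bounds. The first is the bound just proved, applied to each operator separately. The second comes from
\begin{equation*}
T_{t,\eps}-T_{t',\eps}=\int_{t'}^{t}\partial_sT_{s,\eps}\,ds .
\end{equation*}
Here $\partial_s$ only inserts a factor $\lambda$ into the amplitude. In the nonstationary regime this costs a factor $2^j$, which is absorbed into $2^{-Mj}$. This yields the factor $\min(1,2^i|t-t'|)$.

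\textbf{Main obstacle.} The delicate point is keeping the bound uniform while $\eps\to0$. Since $\hat\rho_\eps$ does not localize $\lambda$, the argument must not rely on $\lambda$-localization coming from $\rho_\eps$. That is why we average over $\tau$ and estimate each $T_\tau$, which avoids the issue. We also need care in the integration by parts over the unbounded $\lambda$ range, where the decay $(|\lambda|+2^j)^{-M}$ from the nonstationary phase is used.
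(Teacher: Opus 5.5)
Your proof is correct and follows the same route as the paper. Both arguments use comparability of input and output covectors on the canonical relation, nonstationary phase via integration by parts when $j\geq i+C_0$, the observation that $\partial_t$ only raises the amplitude order, and the passage to $T_{t,\eps}^\chi$ by Minkowski's inequality in $\tau$. Your version is more explicit, with the two $\lambda$-regions and Schur's test spelled out, and because you obtain $2^{-Mj}$ outright, you do not need the paper's appeal to Lemma~\ref{lem:cylinder} for the factor $2^{-i(d-1)/2}$.
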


\begin{proof}
For the unregularized family, the localized kernel is a Fourier integral distribution whose canonical relation is contained in a fixed conic set on which input and output covectors have comparable size. If $j\geq i+C_0$, the phase has no stationary point after the input and output projections are inserted. Repeated integration by parts gives arbitrary decay in $2^{j-i}$. On the complementary frequency region, Lemma~\ref{lem:cylinder} supplies the factor $2^{-i(d-1)/2}$. Differentiating or taking a difference in $t$ changes only the amplitude order and gives the factor $\min(1,2^i|t-t'|)$ established in Lemma~\ref{lem:cylinder}. Finally,
\begin{equation*}
T_{t,\eps}^\chi=\int \rho_\eps(\tau-t)T_\tau^\chi\,d\tau,
\end{equation*}
so the same estimates pass uniformly to the regularized family by Minkowski's inequality.
\end{proof}

\begin{proof}[Proof of Theorem \ref{thm:area}]
Choose $s$ with
\begin{equation*}
\frac{d+1}{2}<s<\dimH(E),
\end{equation*}
and choose a probability measure $\mu$ supported on $E$ with finite $s$-energy,
\begin{equation*}
I_s(\mu)=\int\int |x-y|^{-s}\,d\mu(x)d\mu(y)<\infty.
\end{equation*}
Let $P_i$ be a fixed smooth Littlewood--Paley projection to the annulus $|\xi|\approx2^i$, and set $\mu_i=P_i\mu$. The finite energy assumption gives
\begin{equation*}
\|\mu_i\|_2\lesssim2^{i(d-s)/2}.
\end{equation*}

We next choose a positive-mass patch where the level-set geometry is uniform. Since $s>1$, the measure $\mu$ gives zero mass to every line through the origin, and hence
\begin{equation*}
(\mu\times\mu)
\bigl(\{(x_1,x_2):x_1\wedge x_2=0\}\bigr)
=
0.
\end{equation*}
Moreover, $2s>d$, so Lemma~\ref{lem:orthogonal-pairs-null} gives
\begin{equation*}
(\mu\times\mu)
\bigl(\{(x_1,x_2):x_1\cdot x_2=0\}\bigr)
=
0.
\end{equation*}

Therefore there are compact intervals $I,I^*\subset(0,\infty)$ satisfying
\begin{equation*}
I\subset\operatorname{int}(I^*),
\end{equation*}
constants
\begin{equation*}
0<a<b<\infty,
\qquad
c_0>0,
\end{equation*}
and a nonnegative cutoff $\chi\in C_c^\infty(\R^d\times\R^d)$ supported where
\begin{equation*}
a\leq |x_2|\leq b,
\qquad
A(x_1,x_2)\in\operatorname{int}(I),
\qquad
|x_1\cdot x_2|\geq c_0,
\end{equation*}
such that
\begin{equation*}
\int\chi(x_1,x_2)\,d\mu(x_1)d\mu(x_2)>0.
\end{equation*}

Define the localized pinned area measure by
\begin{equation*}
\int g(t)\,d\nu_\chi(t)
=
\int g(A(x_1,x_2))\chi(x_1,x_2)\,d\mu(x_1)d\mu(x_2).
\end{equation*}
Let $\rho_\eps$ be the approximate identity chosen above and set
\begin{equation*}
\nu_{\chi,\eps}(t)
=
\int \rho_\eps(A(x_1,x_2)-t)\chi(x_1,x_2)\,d\mu(x_1)d\mu(x_2).
\end{equation*}

For $t\in I$ and $0<\eps<\eps_0$, the kernel
\begin{equation*}
K_{t,\eps}(x_1,x_2)
=
\chi(x_1,x_2)\rho_\eps(A(x_1,x_2)-t)
\end{equation*}
is smooth and compactly supported on the chosen regular patch. Thus the notation
\begin{equation*}
\nu_{\chi,\eps}(t)
=
\langle T_{t,\eps}^\chi\mu,\mu\rangle
\end{equation*}
means precisely the preceding double integral, or equivalently the pairing of
$\mu\otimes\mu$ with the smooth kernel $K_{t,\eps}$.

For the estimates, we use this identity after Littlewood--Paley decomposition. With $\mu_i=P_i\mu$, we first write the identity for finite partial sums and then pass to the limit using the estimates below:
\begin{equation*}
\nu_{\chi,\eps}(t)
=
\sum_{i,j\geq0}
\langle T_{t,\eps}^\chi\mu_i,\mu_j\rangle.
\end{equation*}

By Cauchy--Schwarz, Lemma \ref{lem:cylinder}, and the Frostman bound,
\begin{equation*}
|\langle T_{t,\eps}^\chi\mu_i,\mu_j\rangle|
\lesssim_{\chi,I}
2^{-i(d-1)/2}2^{i(d-s)/2}2^{j(d-s)/2}.
\end{equation*}
By Lemma~\ref{lem:output-frequency}, the localized operator has the following output-frequency compatibility: for every $M>0$,
\begin{equation*}
|\langle T_{t,\eps}^\chi\mu_i,\mu_j\rangle|
\lesssim_{\chi,I,M}
2^{-i(d-1)/2}2^{-M(j-i)_+}\|\mu_i\|_2\|\mu_j\|_2,
\end{equation*}
where $u_+=\max(u,0)$. Thus the contribution of $j>i+C_0$ is rapidly decaying. Setting $i=N$ and summing over $j\leq N+C_0$ gives
\begin{equation*}
\sum_{j\leq N+C_0}
|\langle T_{t,\eps}^\chi\mu_N,\mu_j\rangle|
\lesssim_{\chi,I}
2^{-N(d-1)/2}2^{N(d-s)/2}
\sum_{j\leq N+C_0}2^{j(d-s)/2}.
\end{equation*}
Since $s<d$, the last sum is dominated by its largest term. Hence the contribution at frequency $N$ is
\begin{equation*}
\lesssim_{\chi,I}
2^{-N(d-1)/2}2^{N(d-s)/2}2^{N(d-s)/2}
=
2^{N(d+1-2s)/2}.
\end{equation*}
This is summable because $s>(d+1)/2$. Therefore the regularized densities $\nu_{\chi,\eps}$ are uniformly bounded on $I$, independently of $\eps$.

We now prove parameter regularity for the regularized densities. Choose
\begin{equation*}
0<\gamma<\min\left\{1,\ s-\frac{d+1}{2}\right\}.
\end{equation*}
Using the difference estimate in Lemma~\ref{lem:cylinder}, Cauchy--Schwarz, the Frostman bound, and the same output-frequency compatibility as above, the contribution with top input frequency $N$ satisfies
\begin{equation*}
\sum_{j\leq N+C_0}
\left|
\left\langle
(T_{t,\eps}^\chi-T_{t',\eps}^\chi)\mu_N,\mu_j
\right\rangle
\right|
\lesssim_{\chi,I}
\min(1,2^N|t-t'|)
2^{-N(d-1)/2}2^{N(d-s)/2}
\sum_{j\leq N+C_0}2^{j(d-s)/2}.
\end{equation*}
Since $s<d$, this is bounded by
\begin{equation*}
\lesssim_{\chi,I}
\min(1,2^N|t-t'|)2^{N(d+1-2s)/2}.
\end{equation*}
As $0<\gamma\leq1$,
\begin{equation*}
\min(1,2^N|t-t'|)
\leq
(2^N|t-t'|)^\gamma.
\end{equation*}
Therefore the contribution with top input frequency $N$ is
\begin{equation*}
\lesssim_{\chi,I,\gamma}
|t-t'|^\gamma
2^{N((d+1-2s)/2+\gamma)}.
\end{equation*}
The exponent is negative by the choice of $\gamma$, so summing in $N$ gives
\begin{equation*}
|\nu_{\chi,\eps}(t)-\nu_{\chi,\eps}(t')|
\lesssim_{\chi,I,\gamma}
|t-t'|^\gamma,
\end{equation*}
uniformly in $\eps$ and uniformly for $t,t'\in I$. Thus the family $\{\nu_{\chi,\eps}\}_{\eps>0}$ is uniformly bounded and uniformly H\"older continuous on $I$.

By Arzel\`a--Ascoli, a sequence $\eps_m\to0$ may be chosen so that $\nu_{\chi,\eps_m}$ converges uniformly on $I$ to a continuous nonnegative function $h$. On the other hand, by the definition of the regularization, the measures $\nu_{\chi,\eps}(t)\,dt$ converge weakly to $\nu_\chi$. Hence, on $I$,
\begin{equation*}
d\nu_\chi(t)=h(t)\,dt.
\end{equation*}
Moreover,
\begin{equation*}
\nu_\chi(\R)
=
\int \chi(x_1,x_2)\,d\mu(x_1)d\mu(x_2)>0.
\end{equation*}

Since $\Phi(\supp\chi)\subset\operatorname{int}(I)$, we have
\begin{equation*}
\nu_\chi(I)=\nu_\chi(\R)>0.
\end{equation*}

Therefore $h$ is not identically zero. Since $h$ is continuous and nonnegative, there are $t_0\in I$ and an open interval $J\subset I$ with $h(t)>0$ for every $t\in J$. It follows that every nonempty open subinterval of $J$ has positive $\nu_\chi$-measure, and hence
\begin{equation*}
J\subset \supp \nu_\chi.
\end{equation*}
The support of $\nu_\chi$ is contained in the closure of the localized area values
\begin{equation*}
\{A(x_1,x_2):(x_1,x_2)\in (E\times E)\cap \supp\chi\}.
\end{equation*}
Since $A$ is continuous and $(E\times E)\cap\supp\chi$ is compact, this set of localized area values is compact and therefore closed. Thus $J\subset \Vol_2^0(E)$. This proves that $\Vol_2^0(E)$ has nonempty interior.
\end{proof}

\begin{remark}
When $d=2$, the cross-section $S^{d-2}$ is $S^0$, and the estimate reduces to the line-averaging bound
\begin{equation*}
\|T_t f_i\|_2\lesssim 2^{-i/2}\|f_i\|_2.
\end{equation*}
For $d>2$, the additional factor $2^{-i(d-2)/2}$ comes from the Fourier decay of the spherical cross-section $S^{d-2}$.
\end{remark}

\subsection{Doubly pinned triangle areas}

\begin{proof}[Proof of Theorem~\ref{thm:double-pinned-area}]
After translating, assume $x_0=0$. Choose a countable family of nonnegative cutoffs $\chi_n$ supported on compact regular patches of the form
\begin{equation*}
0<a\leq |y|\leq b,
\qquad
0<c\leq |x\wedge y|\leq C,
\qquad
|x\cdot y|\geq c_0>0,
\end{equation*}
such that
\begin{equation*}
\sum_{n\geq1}\chi_n(x,y)>0
\end{equation*}
whenever
\begin{equation*}
y\neq0,
\qquad
x\wedge y\neq0,
\qquad
x\cdot y\neq0.
\end{equation*}

On each patch, Lemma~\ref{lem:cylinder} gives the fixed-parameter estimate with smoothing order
\begin{equation*}
\gamma=\frac{d-1}{2},
\end{equation*}
together with the required parameter-difference estimates, while Lemma~\ref{lem:output-frequency} gives the output-frequency compatibility. Hence Proposition~\ref{prop:scalar-double-pinning} applies with $X=Y=\R^d$ and with both the point and pin measures equal to $\mu$. In this case
\begin{equation*}
\eta
=
\frac{d-1}{2}+s-d
=
s-\frac{d+1}{2}.
\end{equation*}
Thus, when $s>(d+1)/2$, every nonzero localized pinned pushforward has an $L^2$ density for $\mu$-almost every $y$, and when $s>(d+2)/2$, it has a continuous density.

Since $s>\frac{d+1}{2}>1$, the measure $\mu$ gives zero mass to every line. Therefore, for every $y\neq0$,
\begin{equation*}
\mu\bigl(\{x:x\wedge y=0\}\bigr)=0.
\end{equation*}
Moreover, $2s>d$, so Lemma~\ref{lem:orthogonal-pairs-null} and Fubini's theorem give
\begin{equation*}
\mu\bigl(\{x:x\cdot y=0\}\bigr)=0
\end{equation*}
for $\mu$-almost every $y$. After intersecting the countably many full-measure sets of pins supplied by the localized applications of the proposition, we conclude that, for $\mu$-almost every $y\neq0$, at least one localized pinned pushforward is nonzero. Its support is contained in
\begin{equation*}
\big\{|x\wedge y|:x\in E\big\}.
\end{equation*}
The positive-measure and nonempty-interior conclusions follow. Translating back proves the theorem.

\end{proof}

\section{Simplex-volume consequences}\label{sec:simplex-consequences}

\begin{proof}[Proof of Theorem~\ref{thm:double-pinned-volumes}]
Apply the nonempty-interior part of Theorem~\ref{thm:geometric-reduction} with
\begin{equation*}
X_{\mathrm{int}}(m)=\frac{m+1}{2},
\end{equation*}
as supplied by Theorem~\ref{thm:area}. The propagated threshold is
\begin{equation*}
X_{\mathrm{int}}(d-k+2)+k-2
=
\frac{d-k+3}{2}+k-2
=
\frac{d+k-1}{2}.
\end{equation*}
\end{proof}

\begin{remark}[The tetrahedral threshold]
For $k=3$, translate so that $x_0=0$, fix $y\neq0$, and set $F=P_{y^\perp}E$. Then
\begin{equation*}
\Vol_3^{0,y}(E)=|y|\,\Vol_2^0(F).
\end{equation*}
The projected triangle has one prescribed vertex and two free vertices, so the input is Theorem~\ref{thm:area}, not the doubly pinned triangle theorem. Since $F\subset y^\perp\simeq\R^{d-1}$, Theorem~\ref{thm:area} requires $\dimH(F)>d/2$. The bound $\dimH(F)\geq\dimH(E)-1$ then gives $\dimH(E)>(d+2)/2$. Prescribing a third tetrahedral vertex would instead produce a doubly pinned triangle and the interval threshold $(d+3)/2$.
\end{remark}

\begin{proof}[Proof of Corollary~\ref{cor:strongly-pinned-volumes}]
For $k=2$, this is Theorem~\ref{thm:area}. For $3\leq k<d$, choose any $y\in E\setminus\{x_0\}$ and use
\begin{equation*}
\Vol_k^{x_0,y}(E)\subset\Vol_k^{x_0}(E).
\end{equation*}
Theorem~\ref{thm:double-pinned-volumes} then gives the conclusion.
\end{proof}

\begin{remark}[What the reduction preserves]
The first additional vertex $y$ is arbitrary, and the conclusion holds for every $y\neq x_0$. The later vectors chosen in the induction are not arbitrary: they are selected through radial projection to retain sufficiently large projected dimension. This is the precise reason that the method reaches double pinning but does not directly give comparable results for a fully prescribed higher-dimensional pin frame.
\end{remark}

\begin{remark}[Propagation of the two triangle conclusions]
The positive-measure and nonempty-interior parts of Theorem~\ref{thm:geometric-reduction} propagate their respective triangle thresholds separately. A strongly pinned triangle positive-measure threshold $X_{\mathrm{pm}}(m)$ gives the higher-volume threshold
\begin{equation*}
X_{\mathrm{pm}}(d-k+2)+k-2,
\end{equation*}
while a strongly pinned triangle nonempty-interior threshold $X_{\mathrm{int}}(m)$ gives
\begin{equation*}
X_{\mathrm{int}}(d-k+2)+k-2.
\end{equation*}
In the present paper, Theorem~\ref{thm:area} gives $X_{\mathrm{int}}(m)=(m+1)/2$, and hence nonempty interior for the higher-rank volume set at $(d+k-1)/2$.
\end{remark}

\section{Future directions}\label{sec:future}

The first problem is to close the half-derivative gap in the doubly pinned scalar theorem. For triangle areas and for general scalar $\Phi$-configurations, the present argument gives positive measure at the fixed-parameter threshold and nonempty interior only after an additional half derivative in the level parameter. A genuine averaged local-smoothing estimate would lower the interval threshold from $(d+2)/2$ toward $(d+1)/2$.

A second direction is to prescribe more vertices of a simplex. The projection argument retains one arbitrary vector after the base point but chooses all subsequent directions. It is natural to seek analytic estimates that are uniform over a larger pin frame and thereby quantify triples or higher tuples of pins.

A third problem is to analyze intermediate wedge-product transforms directly. The current proof reduces
\begin{equation*}
|v_1\wedge\cdots\wedge v_k|=t
\end{equation*}
to the two-vector cylinder transform. Direct estimates for intermediate ranks could yield mixed-dimensional results, improved exceptional-set bounds, or better thresholds in ranges where projection discards useful multilinear structure.

\begin{question}[Regularity of pinned volume measures]\label{q:volume-pushforward-regularity}
Let $2<k<d$, let $\mu$ be a finite-energy measure on $E$, and fix $x_0\in\R^d$. Under the dimensional hypothesis of Corollary~\ref{cor:strongly-pinned-volumes}, does the pushforward
\begin{equation*}
\left((x_1,\ldots,x_k)\mapsto |(x_1-x_0)\wedge\cdots\wedge(x_k-x_0)|\right)_\#(\mu^{\otimes k})
\end{equation*}
have an absolutely continuous, or even continuous, density on a nontrivial interval? The projection argument proves a set-theoretic non-empty interior conclusion but does not retain regularity of the full multilinear pushforward.
\end{question}

Restricted and singular distance problems provide another natural testing ground. The diagonal restricted-distance configurations of \cite{GGPP} and the singular three-point variant of \cite{BorgesIosevichOu} share the passage from smoothing estimates to size or regularity of a scalar value set, but their multipoint diagonal geometry is not covered by the present two-point theorem. A corresponding restricted-input or multilinear pinning principle could lead to pinned positive-measure and nonempty-interior extensions of these results.

Finally, the scalar estimates may be combined with the hypergraph and pruning methods of \cite{BorgesFosterOuPalssonRomeroAcosta}. This suggests non-empty interior or positive-measure results for vectors of simplex volumes, provided the constituent simplices can be ordered so that each new scalar coordinate is controlled by an available pinned estimate.


\begin{thebibliography}{99}

\bibitem{BorgesFosterOuPalsson}
T. Borges, B. Foster, Y. Ou and E. A. Palsson,
\textit{Nonempty interior of pinned distance and tree sets},
Adv. Math. 493 (2026), 110917.

\bibitem{BorgesFosterOuPalssonRomeroAcostaDistGraph}
T. Borges, B. Foster, Y. Ou, E. A. Palsson and F. Romero Acosta,
\textit{Falconer-type results for any finite graph with multiple pins},
arXiv:2603.01954.

\bibitem{BorgesFosterOuPalssonRomeroAcosta}
T. Borges, B. Foster, Y. Ou, E. A. Palsson and F. Romero Acosta,
\textit{On volume vectors determined by hypergraphs in thin subsets of Euclidean space},
arXiv:2607.00153.

\bibitem{BorgesIosevichOu}
T. Borges, A. Iosevich and Y. Ou,
\textit{A singular variant of the Falconer distance problem},
arXiv:2306.05247.

\bibitem{BorgesOuPasquariello}
T. Borges, Y. Ou and M. Pasquariello,
\textit{From weighted paraboloid restriction to $k$-stars and distance graphs},
arXiv:2607.10574.

\bibitem{BrightMarshallSenger}
P. Bright, C. Marshall and S. Senger,
\textit{Pinned dot product set estimates},
Res. Math. Sci. 13 (2026), Paper No. 9.

\bibitem{Bourgain}
J. Bourgain, \textit{Hausdorff dimension and distance sets}, Israel J. Math. 87 (1994), 193--201.

\bibitem{DiNezzaPalatucciValdinoci}
E. Di Nezza, G. Palatucci and E. Valdinoci,
Hitchhiker's guide to the fractional Sobolev spaces,
\emph{Bull. Sci. Math.} \textbf{136} (2012), no. 5, 521--573.

\bibitem{DuGuthOuWangWilsonZhang}
X. Du, L. Guth, Y. Ou, H. Wang, B. Wilson and R. Zhang,
\textit{Weighted restriction estimates and application to Falconer distance set problem},
Amer. J. Math. 143 (2021), 175--211.

\bibitem{DuIosevichOuWangZhang}
X. Du, A. Iosevich, Y. Ou, H. Wang and R. Zhang,
\textit{An improved result for Falconer's distance set problem in even dimensions},
Math. Ann. 380 (2021), 1215--1231.

\bibitem{DuOuRenZhang}
X. Du, Y. Ou, K. Ren and R. Zhang,
\textit{New improvement to Falconer distance set problem in higher dimensions},
arXiv:2309.04103.

\bibitem{DuZhang}
X. Du and R. Zhang,
\textit{Sharp $L^2$ estimates of the Schr\"{o}dinger maximal function in higher dimensions},
\emph{Ann. of Math. (2)} \textbf{189} (2019), no. 3, 837--861.

\bibitem{ErdoganDistance}
M. B. Erdo\u{g}an, \textit{A bilinear Fourier extension theorem and applications to the distance set problem}, Int. Math. Res. Not., no. 23, (2005), 1411--1425.

\bibitem{ErdoganHartIosevich}
M. B. Erdo{\u g}an, D. Hart and A. Iosevich,
\textit{Multiparameter projection theorems with applications to sums-products and finite point configurations in the Euclidean setting},
in \textit{Recent advances in harmonic analysis and applications}, Springer Proc. Math. Stat. 25, Springer, New York, 2013, 93--103.

\bibitem{EswarathasanIosevichTaylor}
S. Eswarathasan, A. Iosevich and K. Taylor, \textit{Fourier integral operators, fractal sets, and the regular value theorem}, \emph{Adv. Math.} \textbf{228} (2011), no. 4, 2385--2402.

\bibitem{Falconer}
K. J. Falconer, \textit{On the Hausdorff dimensions of distance sets}, Mathematika 32 (1985), 206--212.

\bibitem{GaitanPalsson2026}
J. Gaitan Montejo and E. A. Palsson,
\textit{On volumes of simplices in intermediate dimensions},
arXiv:2605.22450.

\bibitem{GGPP}
J. Gaitan, A. Greenleaf, E. A. Palsson and G. Psaromiligkos,
\textit{On restricted Falconer distance sets},
Canad. J. Math. 77 (2025), no. 2, 665--682.

\bibitem{GaloMcDonald}
B. Galo and A. McDonald,
\textit{Volumes spanned by $k$-point configurations in $\R^d$},
J. Geom. Anal. 32 (2022), no. 1, Paper No. 23.

\bibitem{GIT19}
A. Greenleaf, A. Iosevich and K. Taylor,
\textit{Configuration sets with nonempty interior},
J. Geom. Anal. 31 (2021), 6662--6680.

\bibitem{GIT22}
A. Greenleaf, A. Iosevich and K. Taylor,
\textit{On $k$-point configuration sets with nonempty interior},
Mathematika 68 (2022), 163--190.

\bibitem{GIT24}
A. Greenleaf, A. Iosevich and K. Taylor,
\textit{Nonempty interior of configuration sets via microlocal partition optimization},
Math. Z. 306 (2024), Paper No. 66.

\bibitem{GIT25}
A. Greenleaf, A. Iosevich and K. Taylor,
\textit{Realizing trees of configurations in thin sets},
Pacific J. Math. 335.2 (2025), pp. 355-–372.

\bibitem{GreenleafIosevichMourgoglou}
A. Greenleaf, A. Iosevich and M. Mourgoglou,
\textit{On volumes determined by subsets of Euclidean space},
Forum Math. 27 (2015), no. 1, 635--646.

\bibitem{GrafakosGreenleafIosevichPalsson}
L. Grafakos, A. Greenleaf, A. Iosevich and E. A. Palsson,
\textit{Multilinear generalized Radon transforms and point configurations},
Forum Math. 27 (2015), 2323--2360.

\bibitem{GuthIosevichOuWang}
L. Guth, A. Iosevich, Y. Ou and H. Wang,
\textit{On Falconer's distance set problem in the plane},
Invent. Math. 219 (2020), 779--830.

\bibitem{HytonenVanNeervenVeraarWeis}
T. Hyt\"onen, J. van Neerven, M. Veraar and L. Weis,
\emph{Analysis in Banach Spaces. Volume I: Martingales and Littlewood--Paley Theory},
Ergebnisse der Mathematik und ihrer Grenzgebiete, Vol. 63,
Springer, Cham, 2016.

\bibitem{IosevichMourgoglouTaylor}
A. Iosevich, M. Mourgoglou and K. Taylor,
\textit{On the Mattila--Sj\"olin theorem for distance sets},
Ann. Acad. Sci. Fenn. Math. 37 (2012), 557--562.

\bibitem{IosevichTaylorUriarteTuero}
A. Iosevich, K. Taylor and I. Uriarte-Tuero,
\textit{Pinned geometric configurations in Euclidean space and Riemannian manifolds},
Mathematics 9 (2021), no. 15, 1802.

\bibitem{Kaufman}
R. Kaufman, \textit{On Hausdorff dimension of projections}, Mathematika 15 (1968), 153--155.

\bibitem{LiuPinned}
B. Liu,
\textit{An $L^2$-identity and pinned distance problem},
Geom. Funct. Anal. 29 (2019), no. 1, 283--294.

\bibitem{Marstrand}
J. M. Marstrand,
\textit{Some fundamental geometrical properties of plane sets of fractional dimensions},
Proc. London Math. Soc. (3) 4 (1954), 257--302.

\bibitem{MattilaBook}
P. Mattila,
\textit{Geometry of Sets and Measures in Euclidean Spaces},
Cambridge Studies in Advanced Mathematics 44, Cambridge University Press, 1995.

\bibitem{Mattila}
P. Mattila, \textit{Hausdorff dimension, orthogonal projections and intersections with planes}, Ann. Acad. Sci. Fenn. Ser. A I Math. 1 (1975), 227--244.

\bibitem{MattilaSjolin}
P. Mattila and P. Sj\"olin, \textit{Regularity of distance measures and sets}, Math. Nachr. 204 (1999), 157--162.

\bibitem{McDonaldAreas}
A. McDonald,
\textit{Areas spanned by point configurations in the plane},
Proc. Amer. Math. Soc. 149 (2021), no. 5, 2035--2049.

\bibitem{OrponenRadial}
T. Orponen, \textit{On the dimension and smoothness of radial projections}, Anal. PDE 12 (2019), 1273--1294.

\bibitem{PeresSchlag}
Y. Peres and W. Schlag, \textit{Smoothness of projections, Bernoulli convolutions, and the dimension of exceptions}, Duke Math. J. 102 (2000), 193--251.

\bibitem{ShmerkinYavicoli}
P. Shmerkin and A. Yavicoli,
\textit{On the volumes of simplices determined by a subset of $\R^d$},
Ann. Fenn. Math. 50 (2025), 97--108.

\bibitem{Wolff}
T. Wolff, \textit{Decay of circular means of Fourier transforms of measures}, Int. Math. Res. Not. 1999, no. 10, 547--567.

\end{thebibliography}
\end{document}